\renewcommand{\phi}{\varphi}
\newcommand{\C}{{\mathbb{C}}}
\newcommand{\R}{{\mathbb{R}}}
\newtheorem{Theorem}{Theorem}[section]
\newtheorem{Lemma}[Theorem]{Lemma}
\newtheorem{Corollary}[Theorem]{Corollary}
\newtheorem{Proposition}[Theorem]{Proposition}
\newtheorem{Remark}[Theorem]{Remark}
\def\R{{\mathbb R}}
\def \C{{\mathbb C}}
\def \0{\lambda_{0}}
\begin{document}

\title[The Euler problem of two fixed centers]{   On a convex embedding of the Euler problem of two fixed centers  }

\author{Seongchan Kim}
 \address{Universit\"at Augsburg, Universit\"atsstrasse 14, D-86159 Augsburg, Germany}
 \email {seongchan.kim@math.uni-augsburg.de}
\date{\today}



\begin{abstract}
In this article, we study a convex embedding for   the Euler problem of two fixed centers for energies below the critical energy level. We prove that  the doubly-covered elliptic coordinates provide a 2-to-1 symplectic embedding such that the image of the bounded component near the lighter primary of the regularized Euler problem is convex for any energy below the critical Jacobi  energy. This holds true if the two primaries have the equal mass, but  does not holds near the heavier body. 
\end{abstract}

\maketitle
\setcounter{tocdepth}{1}
\tableofcontents

\section{Introduction}\label{sec1}
The \textit{Euler problem of two fixed centers} describes the behavior of a massless body which is attracted by two fixed primaries. The massless body will be referred to as the \textit{satellite} and the two primaries as the \textit{Earth} and the \textit{Moon}. Its Hamiltonian $H : ( \R^2 \setminus \left \{E, M \right \} ) \times \R^2 \rightarrow \R$ is given by
\begin{equation}\label{eq:Hamiltonian}
H(q,p) = \frac{1}{2}|p|^2- \frac{ 1-\mu}{|q-E|} - \frac{\mu}{|q-M|},
\end{equation}
where $\mu \in (0,1)$ is the mass ratio of the two primaries: $E=(0,0)$ and $M=(1,0)$. We observe that the Hamiltonian is \textit{mechanical}, i.e., the sum of the kinetic energy $(1/2)|p|^2$ and the potential energy $U(q):=-(1-\mu)/|q-E| - \mu/|q-M|$. If $\mu<1/2$, then the Earth is heavier than the Moon and if $\mu>1/2$, then the Moon is heavier.  For a negative energy, the satellite is confined to a bounded region in the configuration space. 

There exists a precisely one critical point $L=(l, 0,0,0)$, where 
\begin{equation*}
l = \begin{cases} \frac{ 1 - \mu - \sqrt{ \mu(1-\mu)}}{1-2\mu} & \text{ if } \mu \neq 1/2 \\ 1/2 & \text{ if } \mu = 1/2.\end{cases}
\end{equation*}
The energy value $c_J := H(L) = -1 - 2\sqrt{\mu(1-\mu)}$ is referred to as the \textit{critical Jacobi energy}. In this article, we only consider  energies less than this energy level, $c<c_J$. 

For $c<c_J$, the energy hypersurface $H^{-1}(c)$ consists of two bounded components, whose closures are neighborhoods of the Earth and the Moon. We abbreviate by $\Sigma_c^E$ resp. $\Sigma_c^M$ the Earth component resp. the Moon componenty. Let $\pi : T^* \R^2 \rightarrow \R^2 $ be the projection along the fiber. Given an energy $c$, the \textit{Hill's region} is defined by
\begin{equation*} 
\mathcal{K}_c := \pi( H^{-1}(c) ) \subset \R^2 \setminus \left \{ E, M \right \},
\end{equation*}
which  also consists of  two bounded components: the Earth component $\mathcal{K}_c^E$ and the Moon component $\mathcal{K}^M_c$. By means  of the equation (\ref{eq:Hamiltonian}), one can  write the Hill's region as
\begin{equation*}
\mathcal{K}_c = \left \{ q=(q_1, q_2) \in \R^2 \setminus \left \{ E, M \right \} : U(q) \leq c\right \}.
\end{equation*}

Since the Earth and the Moon are fixed, one can regard them as the foci of a set of ellipses and hyperbolas. To introduce the elliptic coordinates it is convenient to apply the translation $(q_1, q_2, p_1, p_2) \mapsto (q_1-1/2, q_2, p_1, p_2)$ so that   $E=(-1/2, 0)$ and $M=(1/2,0)$. The doubly-covered elliptic coordinates $(\lambda, \nu)  \in \R \times S^1[-\pi, \pi]$ are now defined by
\begin{equation}
\cosh \lambda = |q-E| + |q-M| \in [1, \infty) \;\;\; \text{and}\;\;\; \cos\nu = |q-E| - |q-M| \in [-1, 1],
\end{equation}
whose inverse  is given by
\begin{equation}
q_1 = \frac{1}{2}\cosh \lambda \cos \nu \;\;\; \text{and}\;\;\; q_2 = \frac{1}{2} \sinh \lambda \sin \nu.
\end{equation}
The momenta $p_{\lambda}$ and $p_{\nu}$ are determined by the canonical relation $p_{\lambda} d\lambda + p_{\nu} d \nu = p_1 dq_1 + p_2 dq_2$. Then the Hamiltonian becomes 
\begin{equation} \label{eq:elliptic}
H = \frac{H^1 + H^2}{\cosh^2 \lambda - \cos^2 \nu},
\end{equation}
where $H^1 = 2p_{\lambda}^2 - 2\cosh\lambda $ and $H^2 = 2p_{\nu}^2 + 2(1-2\mu)\cos \nu.$  We define the regularized Hamiltonian by 
\begin{equation}\label{regularizedelliptic}
Q_c = (H-c)(\cosh^2 \lambda - \cos^2 \nu) = Q^1 + Q^2,
\end{equation}
where $Q^1 = 2p_{\lambda}^2 -2\cosh\lambda -c \cosh^2\lambda$ and $Q^2= 2p_{\nu}^2 + 2(1-2\mu)\cos \nu + c \cos^2 \nu$.  Our main result is the following  

\begin{Theorem}\label{theorem3} The doubly-covered elliptic coordinates provide a 2-to-1 symplectic embedding satisfying that the image of the regularized Earth component is convex for any energy below the critical Jacobi energy, provided that $\mu \geq 1/2$. If $\mu < 1/2$, then there exists an energy level $c_0 = c_0(\mu)<c_J$ such that for $c<c_0$ the regularized Earth component is convex and for $c\geq c_0$ it is not. 
\end{Theorem}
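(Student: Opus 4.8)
The plan is to prove Theorem~\ref{theorem3} by computing the second fundamental form of $\Sigma_c^E=Q_c^{-1}(0)$ and reading off when it is definite. The ``$2$-to-$1$ symplectic embedding'' part of the statement is immediate: the momenta were defined by $p_\lambda\,d\lambda+p_\nu\,d\nu=p_1\,dq_1+p_2\,dq_2$, so the coordinate change is symplectic, and since $\nu$ ranges over $[0,2\pi]$ the map covers the punctured plane twice. The real content is convexity. Here I would exploit the separation $Q_c=Q^1+Q^2$, with $Q^1=Q^1(\lambda,p_\lambda)$ and $Q^2=Q^2(\nu,p_\nu)$: in the coordinates $(\lambda,p_\lambda,\nu,p_\nu)$ the Hessian of $Q_c$ is the block-diagonal matrix $\mathrm{diag}\bigl(\partial_{\lambda\lambda}Q^1,\,4,\,\partial_{\nu\nu}Q^2,\,4\bigr)$, because $\partial_{p_\lambda p_\lambda}Q^1=\partial_{p_\nu p_\nu}Q^2=4$ and all mixed derivatives vanish. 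Since $\Sigma_c^E$ bounds the bounded sublevel region $\{Q_c\le 0\}$ lying over the Earth's Hill region, convexity is equivalent to positive-definiteness of this Hessian restricted to the tangent hyperplane $\ker dQ_c$, and nonvanishing Gauss--Kronecker curvature is exactly the nondegenerate case. Computing the bordered Hessian of the diagonal matrix shows that the restricted form is governed by the sign of the explicit quantity
\begin{equation*}
G=\partial_{\nu\nu}Q^2\,\bigl[(\partial_\lambda Q^1)^2+4\,\partial_{\lambda\lambda}Q^1\,(p_\lambda^2+p_\nu^2)\bigr]+\partial_{\lambda\lambda}Q^1\,(\partial_\nu Q^2)^2,
\end{equation*}
with $G>0$ the convex case and $G=0$ the vanishing of the Gauss--Kronecker curvature.

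The crucial simplification is that $G$ depends on the momenta only through $p_\lambda^2+p_\nu^2$: the factors $\partial_\lambda Q^1$, $\partial_\nu Q^2$, $\partial_{\lambda\lambda}Q^1$ and $\partial_{\nu\nu}Q^2$ are functions of $(\lambda,\nu)$ alone. On $\Sigma_c^E$ the relation $Q_c=0$ fixes $p_\lambda^2+p_\nu^2=\tfrac12(c-U)(\cosh^2\lambda-\cos^2\nu)$ over each configuration point, so $G$ descends to a function on the Earth Hill region $\mathcal K_c^E$, and convexity is equivalent to $G>0$ throughout $\mathcal K_c^E$. First I would dispose of the $\lambda$-block: using $c<c_J\le -1$, hence $-c>1$, one checks $\partial_{\lambda\lambda}Q^1=-2\cosh\lambda-2c\cosh 2\lambda>0$ for all $\lambda$ (because $\cosh 2\lambda\ge\cosh\lambda$), so the only possible source of non-convexity is the sign of $\partial_{\nu\nu}Q^2$.

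Writing $x=\cos\nu$, the term $\partial_{\nu\nu}Q^2=2\bigl(-2cx^2-(1-2\mu)x+c\bigr)$ is, since $c<0$, an upward parabola that is positive at $x=\pm1$ (again using $-c>1\ge|1-2\mu|$) and negative on a subinterval $(x_-,x_+)\subset(-1,1)$. On the part of $\mathcal K_c^E$ where $\partial_{\nu\nu}Q^2\ge0$ every term of $G$ is nonnegative and $G>0$, so convexity can fail only where $\cos\nu\in(x_-,x_+)$. The natural candidate for failure is the point where the compensating term $\partial_{\lambda\lambda}Q^1(\partial_\nu Q^2)^2$ vanishes, i.e.\ where $\partial_\nu Q^2=0$ with $\sin\nu\neq0$, which forces $\cos\nu=(1-2\mu)/(-c)$; there $G=\partial_{\nu\nu}Q^2\cdot(\text{nonnegative})<0$. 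Thus I expect convexity to fail precisely when $(1-2\mu)/(-c)$ lies in the range of $\cos\nu$ over $\mathcal K_c^E$. The reach of this range is controlled by the libration point $L$: one computes that at $c=c_J$ the value $(1-2\mu)/(-c)$ coincides exactly with the extremal value of $\cos\nu$ attained at $L$ (for $\mu\neq1/2$ both equal $(1-2\sqrt{\mu(1-\mu)})/(1-2\mu)$, namely the $\cos\nu$-coordinate of $L$), so the candidate sits on the boundary of the region exactly at the critical energy. As $c$ decreases below $c_J$ the two separate, and the sign of this separation is what splits the two cases: for $\mu\ge1/2$ the candidate stays outside $\mathcal K_c^E$ for all $c<c_J$, giving $G>0$ and convexity, whereas for $\mu<1/2$ it enters $\mathcal K_c^E$ once $c$ is large enough, and the threshold $c_0=c_0(\mu)$ is the energy at which $(1-2\mu)/(-c)$ first reaches the range of $\cos\nu$ on $\mathcal K_c^E$.

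Finally I would upgrade nonvanishing of the Gauss--Kronecker curvature to genuine convexity by a connectedness argument: $\Sigma_c^E$ is connected, the signature of its second fundamental form is locally constant wherever $G\neq0$, and at any point over $\nu=\pi$ (where $\partial_{\nu\nu}Q^2>0$, so the whole Hessian is positive definite) the form is positive definite; hence $G>0$ everywhere forces positive-definiteness everywhere, which is needed since in dimension three a positive determinant alone does not rule out two negative principal curvatures. The main obstacle I anticipate is the sharp two-dimensional analysis behind the dichotomy: confirming that the candidate point $\cos\nu=(1-2\mu)/(-c)$ is indeed where $G$ is most negative over $\mathcal K_c^E$, and carrying out the delicate comparison between $(1-2\mu)/(-c)$ and the $\nu$-extent of the Earth Hill region, which coincide to leading order at $c_J$ and must be separated to one further order to produce the exact threshold $c_0(\mu)$ and to distinguish the heavy primary from the light one.
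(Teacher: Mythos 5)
Your reduction is the same as the paper's: the tangential Hessian of the separated Hamiltonian $Q_c=Q^1+Q^2$ has determinant equal to $(|\nabla Q_c|^2)^2$ times (a positive multiple of) your $G$, and the two preliminary observations --- that $Q_{\lambda\lambda}=-2\cosh\lambda-2c\cosh 2\lambda>0$ for $-c>1$, and that non-convexity can only come from the interval of $\cos\nu$ on which $Q_{\nu\nu}<0$ --- are exactly the paper's first two lemmas in Section \ref{sec6}. The gap is in the step where you locate the failure. Your candidate locus is $\partial_\nu Q^2=0$ with $\sin\nu\neq 0$, i.e.\ $\cos\nu=-(1-2\mu)/c$, but this locus never meets the Hill region when $c<c_J$. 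Indeed, the Hill condition reads $c\cos^2\nu+2(1-2\mu)\cos\nu\le c\cosh^2\lambda+2\cosh\lambda$; at $\cos\nu=-(1-2\mu)/c$ the left-hand side equals $-(1-2\mu)^2/c$, the right-hand side is at most $c+2$ for $\cosh\lambda\ge 1$, and $-(1-2\mu)^2/c>c+2$ is equivalent to $c^2+2c+(1-2\mu)^2>0$, which holds for every $c<c_J$. Equivalently, in the paper's parametrization (\ref{ellipticdomain}) the Earth component has $\cos\nu\le y_+=\bigl(-(1-2\mu)+\sqrt{c^2+2c+(1-2\mu)^2}\bigr)/c$, and $y_+<-(1-2\mu)/c$ strictly, with equality only in the limit $c=c_J$; the paper records this as the statement that $1-2\mu+c\cos\nu$ never vanishes on the domain. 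So your candidate sits in the forbidden gap between the Earth and Moon components for all $c<c_J$, and your criterion would predict convexity for every $\mu$ and every $c<c_J$, contradicting the second half of the theorem.

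The actual failure happens at points where $\partial_\nu Q^2\neq 0$, where the negative term $Q_{\nu\nu}\bigl[Q_\lambda^2+4Q_{\lambda\lambda}(p_\lambda^2+p_\nu^2)\bigr]$ merely outweighs, rather than annihilates, the positive term $Q_{\lambda\lambda}(\partial_\nu Q^2)^2$. Detecting this requires a genuine optimization of $G$ over the two-dimensional domain in $(\cosh\lambda,\cos\nu)$: after substituting $2(p_\lambda^2+p_\nu^2)$ from $Q_c=0$ one gets the paper's polynomial $A(x,y)$, whose critical points in $y$ occur at $\cos\nu=-(1-2\mu)/(4c)$ --- note the factor $4$, so this point, unlike yours, does lie in the Earth range once $c\ge c_E''=-1-\tfrac14\sqrt{-28\mu^2+28\mu+9}$ --- and at the roots of a quadratic whose value of $A$ is manifestly positive. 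The threshold $c_0(\mu)$ then comes from the sign change of an explicit quartic $\eta(c)$ evaluated at that critical point, which is precisely the ``one further order'' of separation you anticipated but did not supply. Two smaller remarks: your bracket $Q_\lambda^2+4Q_{\lambda\lambda}(p_\lambda^2+p_\nu^2)$ can vanish (at $\lambda=0$, $p_\lambda=p_\nu=0$), so the sign claim at your candidate point needs a word even where the candidate is admissible; and your closing connectedness argument upgrading a nonvanishing determinant to definiteness is a legitimate point that the paper leaves implicit, so it is worth keeping once the main optimization is repaired.
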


Recall that in \cite{HWZ} Hofer, Wysocki and Zehnder proved that  if  $M \subset \R^4$ is a closed strictly convex hypersurface, then $M$ admits a disk-like global surface of section, see Section \ref{sec2}. In view of this result, the previous theorem implies that for every energy below the critical Jacobi energy the image of the regularized bounded component
around the lighter primary under the doubly-covered elliptic coordinates admits
a disk-like global surface of section.  In the upcoming paper \cite{Kim2} we will see that this is also the case around the heavier primary.

\begin{Remark}  \rm  Let us switch on the rotation term, more precisely for $ a \in [0,1]$, consider the family of  Hamiltonians $H_a(q,p) = H(q,p) + a (q_1 p_2 - q_2p_1)$, where $H$ is given as \eqref{eq:Hamiltonian}. If $a=0$, we obtain the Euler problem. For $a=1$, the corresponding Hamiltonian system is called the \textit{planar circular restricted three-body problem} (PCR3BP).      The Euler problem was first introduced by Euler \cite{Euler1, Euler2} as a starting point toward the study of the PCR3BP. In this paper, following his direction we study the Euler problem. Note that the assertion of the previous theorem also holds true for the Hamiltonian system associated with $H_a$ for $a>0$ sufficiently small since we just add a regular function.  The ultimate goal is to prove the existence of a convex embedding for the PCR3BP to prove the Birkhoff conjecture: if  energy is less than the first critical level, then the energy hypersurface of the PCR3BP contains   two bounded components. In \cite{B1},   Birkhoff proved the existence of the {retrograde periodic orbit} on each bounded component. He then conjectured that the retrograde periodic orbit bounds a {disk-like global surface  of section}. The Birkhoff conjecture holds true the Euler problem, see \cite{Kim2}. In this case, the retrograde periodic orbit becomes an obvious collision orbit which lies on the $q_1$-axis on the opposite side of the other primary. 
\end{Remark}

 The standard way to construct  the double covering  is via the  Levi-Civita embedding \cite{Levi}. The Levi-Civita coordinates are given by $q = 2v^2$ and $p = u/\overline{v}$. We define the regularized Hamiltonian $K_c : \R^4 \cong \C^2 \rightarrow \R$ by
\begin{equation}\label{eq:Levi}
K_c(v, u)  := |v|^2 ( H(v, u) -c)  = \frac{1}{2}|u|^2 - c|v|^2 - \frac{\mu |v|^2}{ \sqrt{ 4v_1^4 + 8v_1^2v_2^2 - 4v_1^2 + 4v_2^4 + 4v_2^2 +1   }} - \frac{1-\mu}{2}.
\end{equation}
Note that the bounded  components $\Sigma_c^E, \;\Sigma_c^M \subset H^{-1}(c)$ are compactified to $\overline{\Sigma}_c^E, \; \overline{\Sigma}_c^M  \subset K^{-1}_c(0)$, which are diffeomorphic to $S^3$. One can naturally ask if this standard way also gives rise to a convex symplectic embedding. The next theorem gives a partial answer.

\begin{Theorem}\label{theorem2} The image of the Levi-Civita embedding of the regularized Earth component is not convex for an energy close to the critical Jacobi energy and for the mass ratio  less than $16/17$.
\end{Theorem}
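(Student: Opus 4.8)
The plan is to characterize convexity of the bounded region enclosed by $K_c^{-1}(0)$ on the Earth side, and then to locate the first failure of this condition as $c\uparrow c_J$. Writing $f(v)=-c|v|^2-\mu|v|^2R^{-1/2}$ with $R=4v_1^4+8v_1^2v_2^2-4v_1^2+4v_2^4+4v_2^2+1$, so that $K_c=\tfrac12|u|^2+f(v)-\tfrac{1-\mu}2$, the Hessian splits as $\mathrm{Hess}\,K_c=\mathrm{diag}(A,I_2)$ with $A=\mathrm{Hess}_v f$. Since $\nabla_u K_c=u$ and $|u|^2=(1-\mu)-2f=:N$ on $K_c^{-1}(0)$, restricting $\mathrm{Hess}\,K_c$ to the tangent space and eliminating the $u$-directions reduces convexity at a point with $u\neq0$ to positivity of the $2\times2$ matrix $B:=A+N^{-1}\nabla f(\nabla f)^{\mathsf T}$. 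A bordered-Hessian computation identifies $\det B$ (up to a positive factor) with the Gauss--Kronecker curvature of $K_c^{-1}(0)$, so it suffices to produce a point where $\det B<0$.

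First I would dispose of the symmetry axes. Along the Earth--Moon axis $\{v_2=0\}$ the reflection $v_2\mapsto-v_2$ makes $A$ diagonal and $\nabla f$ axial, whence $B$ is diagonal; an explicit computation (using $\partial_{v_2}^2 f|_{v_2=0}>0$ and monotonicity of the remaining entry) shows $B\succ0$ there, and likewise on $\{v_1=0\}$. Hence any non-convexity is genuinely off-axis, and the natural place to look is near the regularized Lagrange point $v_\ast=(\sqrt{l/2},0)$, the point at which $K_{c_J}^{-1}(0)$ degenerates (there $\nabla f=0$ and $N=0$ simultaneously). Setting $c=c_J=-\mu/(1-l)^2$ and evaluating gives $\mathrm{Hess}_v f(v_\ast)=\mathrm{diag}(a,b)$ with $a=-8\mu l/(1-l)^3$ and $b=4\mu l/(1-l)^3$, so that $a=-2b$ and $\det A(v_\ast)=-2b^2<0$: the point $v_\ast$ is a saddle of $f$, and this indefiniteness is the seed of the non-convexity.

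Because $\nabla f$ and $N$ both vanish at $v_\ast$, the quantity $\det B$ is a $0/0$ limit there, so the heart of the argument is a one-term expansion along rays $v=v_\ast+r(\cos\theta,\sin\theta)$ into the Earth component, described to leading order by the wedge $a\cos^2\theta+b\sin^2\theta<0$, i.e. $\tan^2\theta<2$. I would expand $f$ to third order, recording $a_3:=\partial_{v_1}^3 f(v_\ast)$ and $c_3:=\partial_{v_1}\partial_{v_2}^2 f(v_\ast)$; the leading ($r^0$) contribution to $\det B$ cancels identically, and the surviving first-order coefficient is proportional to $\cos\theta\,\big(\tfrac{a_3}{3}\cos^2\theta+c_3\sin^2\theta\big)$. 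Substituting $a_3=-\Lambda(1+l)$ and $c_3=\Lambda l$ with $\Lambda=48\mu\sqrt{l/2}\,(1-l)^{-4}>0$, one finds on the Earth side ($\cos\theta<0$) that $\det B<0$ precisely when $\tan^2\theta>\tfrac{1+l}{3l}$.

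Finally I would intersect this with the wedge condition $\tan^2\theta<2$: admissible directions with $\det B<0$ exist if and only if $\tfrac{1+l}{3l}<2$, that is $l>1/5$. Since $l(\mu)=\tfrac{1-\mu-\sqrt{\mu(1-\mu)}}{1-2\mu}$ is decreasing with $l(16/17)=1/5$, this is equivalent to $\mu<16/17$; thus for $\mu<16/17$ there are interior points arbitrarily close to $v_\ast$ with negative Gauss--Kronecker curvature, and by continuity the same persists for $c$ slightly below $c_J$, giving non-convexity. The main obstacle is exactly the degeneracy at $v_\ast$: the leading order of $\det B$ cancels, so the whole threshold $16/17$ is carried by the subleading coefficient, and one must compute the third-order jet of $f$ correctly and check that the offending directions really lie in the Earth wedge rather than on the Moon side.
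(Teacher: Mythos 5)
Your argument is correct and detects the same geometric mechanism as the paper, but the local analysis is organized quite differently. The paper invokes Salom\~{a}o's criterion (Theorem \ref{theoremsalomao}) and, since on the Hill boundary $V=0$ that criterion degenerates to the sign of $F=V_{xx}V_y^2+V_{yy}V_x^2-2V_xV_yV_{xy}$, it studies the relative position of the two plane curves $V_{c_J}=0$ and $F=0$ near their common point $(x_0,0)=(\sqrt{l/2},0)$: both are tangent to $y=\pm\sqrt2(x-x_0)$ (Lemma \ref{lemma52}), $\widetilde V$ vanishes along the tangent line to order exactly three with $\widetilde V'''(x_0)$ proportional to $5l-1$, while $\widetilde F$ vanishes to order at least four, and comparing orders of contact yields $F<0$ on the Earth branch of $V^{-1}(0)$ just to the left of $x_0$ when $\mu<16/17$. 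Your route replaces this curve comparison by a direct ray expansion of the full expression $N\det B=2(0-V)\det A+F$ at the degenerate point: the cancellation of the $r^2$-terms (a consequence of $\nabla V=0$ and $V=0$ there) makes the $r^3$-coefficient $-ab\,r^3\cos\theta\bigl(\tfrac{a_3}{3}\cos^2\theta+c_3\sin^2\theta\bigr)$ the sign-determining quantity, and your threshold $\tfrac{1+l}{3l}<2$ is precisely the paper's condition $a_3+6c_3>0$ evaluated at the wedge boundary $\tan^2\theta=2$; indeed your values $a_3=-\Lambda(1+l)$ and $c_3=\Lambda l$ agree with the appendix computations of $V_{xxx}(x_0,0)$ and $V_{xyy}(x_0,0)$, and $a_3+6c_3=\Lambda(5l-1)$ is exactly $\widetilde V'''(x_0)$. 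What your version buys is that it produces interior points of the Hill region with negative Gauss--Kronecker curvature, so the continuity argument in $c$ is immediate, and it dispenses with the separate lemma on the vanishing of $\widetilde F'''$; the only claim you make beyond the paper's, positive definiteness of $B$ along the symmetry axes, is purely motivational and can be dropped (the paper's Lemma \ref{lemma53} establishes just the weaker on-axis statement $F(x,0)\geq0$, which is all that is needed).
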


\;

As observed in \cite{contact}, for energies below the first critical energy the bounded components of the regularized energy hypersurface of the PCR3BP are star-shaped. One can ask if the starshapedness can be improved to the stronger condition, namely fiberwise convexity. To explain what the fiberwise convexity of the energy hypersurface means, without loss of generality we focus on $\Sigma_c^E$. Due to the singularity at the Earth, $\Sigma_c^E$ is noncompact. However, one can regularize it in the sense of Moser, see \cite{Moser}: switching the roles of $q$ and $p$ we regard $p$ as the position and $q$ as the momentum. The Earth component is then a subset $\Sigma_c^E \subset T^* \R^2 \subset T^*S^2$. In the last inclusion, via the stereographic projection we think $S^2 = \R^2 \cup \left\{ \infty \right\}$. The regularized Earth component, i.e., the closure $\overline{\Sigma_c^E}$ in $T^*S^2$, is regular in the fiber over $\infty$. In particular, $\overline{\Sigma_c^E} \subset T^*S^2$ is a subbundle over $S^2$ whose fibers are diffeomorphic to a circle. Moreover, the fiber over $\infty$ is a circle and hence it is convex. Note that the points in the fiber over $\infty \in S^2 $ correspond to collision state  $q=E$ at which the momenta explode. 

 The submanifold  $\overline{\Sigma_c^E} \subset T^*S^2$  is said to be \textit{fiberwise convex} if each fiber bounds a  strictly convex domain in the tangent space. Since convexity is preserved under chart transition, which is a linear transformation, to show that $\overline{\Sigma_c^E}$ is fiberwise convex it suffices to show that $\left \{ q \in \R^2 : H(q,p) = c\right\}$ bounds a strictly convex domain in $\R^2$ for each $p$ (cf. \cite{book}). 

The question if the regularized PCR3BP is fiberwise convex for energies below the first critical energy is not answered yet. If this holds true, one can interpret PCR3BP for energies below the first critical energy as the Legendre transformation of a geodesic problem on the two-sphere with a family of Finsler metrics. In particular, the integral curves of PCR3BP  can be regarded as geodesics for the Finsler metric. Moreover, the Conley-Zehnder indices of the periodic orbits agree with the Morse indices of the corresponding geodesics. A direct consequence of this is that for energies below the first critical energy, all periodic orbits  are of Conley-Zehnder indices greater than or equal to zero. Our final result partially answers this question for the Euler problem.

\begin{Theorem}\label{theorem1}  Assume that $\mu <1/2$. The regularized Earth component $\overline{\Sigma_c ^E}$ is not fiberwise convex for an energy close to the critical Jacobi energy. On the other hand, if the two primaries have the equal mass, $\mu=1/2$, then the  compact components are fiberwise convex.
\end{Theorem}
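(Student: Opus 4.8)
The plan is to reduce fiberwise convexity to a planar question about the level curves of the potential $U(q)=-\frac{1-\mu}{|q-E|}-\frac{\mu}{|q-M|}$, and then to read off the dichotomy between $\mu<1/2$ and $\mu=1/2$ from the sign of a single cubic quantity of $U$ at the Lagrange point $L$. As explained before the statement, after interchanging the roles of $q$ and $p$ it suffices to prove that for every $p$ the fiber $\{q\in\R^2:H(q,p)=c\}$ bounds a strictly convex domain. Since $H=\frac12|p|^2+U(q)$, this fiber is the level set $\{U=\tilde c\}$ with $\tilde c:=c-\frac12|p|^2$, and the relevant domain is the component of $\{U\le\tilde c\}$ containing $E$; as $p$ ranges over $\R^2$ the value $\tilde c$ sweeps out $(-\infty,c]$, and the fiber over $\infty$ is the convex circle already identified. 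Hence $\overline{\Sigma_c^E}$ is fiberwise convex if and only if the Earth component of $\{U=\tilde c\}$ is strictly convex for all $\tilde c\le c$.

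Writing $U_i=\partial U/\partial q_i$, $U_{ij}$ for the second partials, and
\[
D:=U_{11}U_2^2-2U_1U_2U_{12}+U_{22}U_1^2,
\]
the restriction of the Hessian of $U$ to the unit tangent of a level curve equals $D/|\nabla U|^2$, so the sublevel set is strictly convex at a regular boundary point exactly when $D>0$ there. Since $L$ is the only critical point of $U$, one has $\nabla U\ne0$ on the Earth loop away from $L$, so convexity fails precisely where $D\le0$. Along the axis $\{q_2=0\}$ the reflection $q_2\mapsto-q_2$ forces $U_2=U_{12}=0$ and $U_{22}=\frac{1-\mu}{|q_1|^3}+\frac{\mu}{|q_1-1|^3}>0$, whence $D=U_{22}U_1^2\ge0$; thus convexity can only fail off the axis, and the analysis must be pushed to the places where the loop comes closest to $L$.

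I would therefore expand $U$ at $L=(l,0)$. The reflection symmetry diagonalizes the Hessian, and a short computation gives $U_{11}(L)=-2U_{22}(L)$, so $A:=U_{11}(L)=-2B$ with $B:=U_{22}(L)>0$. On the critical loop $\{U=c_J\}$ the quadratic part of $D$ is proportional to $BY^2+AX^2$ (with $X=q_1-l$, $Y=q_2$), which vanishes to quadratic order along the loop; hence the leading contribution is cubic. Carrying the expansion to third order and using $A=-2B$ together with the leading loop relation $Y^2\approx2X^2$ gives
\[
D\approx\tfrac23 B^2\big(U_{111}(L)+6U_{122}(L)\big)X^3 .
\]
The balance relation $\frac{1-\mu}{l^2}=\frac{\mu}{(1-l)^2}$ defining $l$ then yields $U_{111}(L)+6U_{122}(L)=-12\big((1-\mu)l^{-4}-\mu(1-l)^{-4}\big)$, which has the sign of $2l-1$, i.e. is positive exactly when $\mu<1/2$. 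Since the Earth side of $L$ has $X<0$, for $\mu<1/2$ this gives $D<0$ at off-axis points of the critical loop arbitrarily close to $L$: the critical loop is strictly concave there. A scaling argument (taking $\delta$ small compared with $|X|^3$) then transfers the strict inequality $D<0$ to the smooth sublevel loops $\{U=c_J-\delta\}$, which are genuine fibers; hence for every $c$ slightly below $c_J$ some fiber of $\overline{\Sigma_c^E}$ is non-convex, proving the first assertion.

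For $\mu=1/2$ the obstructing quantity $U_{111}(L)+6U_{122}(L)$ vanishes, in agreement with the extra reflection symmetry $q_1\mapsto1-q_1$, and the task becomes the global inequality $D>0$ on every Earth loop $\{U=\tilde c\}$, $\tilde c<c_J$; by the symmetry the Moon loops are congruent, so both compact components are handled at once. I would prove this positivity directly, using the two reflection symmetries to parametrize each loop by a single variable — conveniently via the doubly-covered elliptic coordinates, in which $U=-2\cosh\lambda/(\cosh^2\lambda-\cos^2\nu)$ — and reducing $D>0$ to a one-parameter inequality. The main obstacle is exactly this global estimate for $\mu=1/2$: whereas the non-convexity for $\mu<1/2$ follows from the sharp local computation at $L$ above, excluding any loss of convexity uniformly over all loops and over the whole range of $\tilde c$ requires controlling $D$ globally, where the competition between the positive axial contribution and the off-axis terms is most delicate as $\tilde c\uparrow c_J$.
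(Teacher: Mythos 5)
Your reduction to planar convexity of the sublevel sets of $U$, your observation that the curvature numerator $D$ is nonnegative on the axis, and your local analysis at $L$ for $\mu<1/2$ all match the paper in substance. In particular, your third-order expansion of $D$ along the critical loop, yielding $D\approx\tfrac23 B^2\bigl(U_{111}(L)+6U_{122}(L)\bigr)X^3$ with the coefficient of the sign of $2l-1$, is a correct and in fact more direct repackaging of what the paper does: the paper instead restricts both $U-c_J$ and $D$ to the tangent lines $q_2=\pm\sqrt2(q_1-l)$, shows the third derivative of the former is positive while that of the latter vanishes, and concludes that the critical loop enters the region $\{D<0\}$ near $L$. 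The two computations are equivalent (the paper's $V'''(l)=U_{111}(l,0)+6U_{122}(l,0)=12(2l-1)/\bigl(l^2(1-l)^2(2l^2-2l+1)\bigr)$ is exactly your cubic quantity), and your continuity transfer to regular loops $\{U=c_J-\delta\}$ is the same step the paper takes. So the $\mu<1/2$ half is sound.

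The genuine gap is the $\mu=1/2$ half, which you explicitly leave as a plan (``I would prove this positivity directly\dots reducing $D>0$ to a one-parameter inequality''). This is where essentially all of the paper's work in its proof lies, and it is not a routine one-variable estimate: the paper does \emph{not} parametrize the loops and bound $D$ along them. Instead it separates the Earth region from the bad set by the tangent lines through $L=(1/2,0)$: it first shows (using that the restriction of $U$ to the tangent line has no critical point, via a quartic discriminant computation) that every Earth loop lies strictly inside the wedge bounded by $q_2=\pm\sqrt2(q_1-1/2)$, and then shows that the entire zero set $\{D=0\}$ lies in the complementary wedge. The second step itself requires real work: one must control not only the component of $\{D=0\}$ through $L$ (done by showing $D>0$ on the tangent lines for $q_1<1/2$) but also rule out stray bounded components of $\{D=0\}$ meeting the Earth region, which the paper does by locating all critical points of $D$ via sign analyses of $\partial_r D$ and $\partial_\theta D$ in polar coordinates, involving degree-nine polynomial estimates. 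None of this is present in your proposal, so as written the second assertion of the theorem is unproved; you have correctly identified where the difficulty sits, but identifying it is not the same as resolving it.
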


\begin{Remark} \rm By numerical experiments we expect that the compact component near the lighter body is fiberwise convex. 
\end{Remark}

To prove the theorem, as mentioned before one needs to show that for each $p$, the curve $H_p^{-1}(q)=c$ bounds a strictly convex domain, where $H_p(q) := H(q,p)$. Observe that 
\begin{equation}
\left \{ q \in \R^2 \setminus \left \{E, M \right \} :H_p^{-1}(q) = c \right \}  = \left \{ q \in \R^2 \setminus \left \{E, M \right \} : U(q) = c-\frac{1}{2}|p|^2 \right \}.
\end{equation}
Since the Hamiltonian is mechanical,   fiberwise convexity of $\overline{\Sigma_c^E}$ is equivalent to convexity of $\mathcal{K}_{c-(1/2)|p|^2}^E$ for each $p$. Moreover, since $U(q) = c- (1/2)|p|^2$ is a planar curve, we can compute its curvature.  In Section \ref{sec4} we prove that for $\mu < 1/2$ the curvature of $\mathcal{K}_{c_J}^E$ is positive far away from the point $(l,0)$ and is negative near that point. Then by continuity this proves the first assertion of the theorem. For the case $\mu=1/2$, we show that the curvature is nonvanishing along $\partial \mathcal{K}_{c- (1/2)|p|^2}^E$ for any $p$.

\;\;

\textbf{Acknowledgements:} First and foremost, I would like to express my deepest gratitude to  my advisor Urs Frauenfelder for interesting me in this subject and a lot of advice. I am also grateful to Yehyun Kwon  for fruitful discussions. Furthermore, I want to thank to the Institute for Mathematics of University of Augsburg for providing a supportive research environment. This work is supported by DFG grants CI 45/8-1 and FR 2637/2-1.

\section{History and Known results}\label{sec2}

Let $M$ be a closed  three-manifold equipped with a vector field $X$ without rest points. A \textit{global disk-like surface of section} for $X$ consists of an embedded closed disk $\mathcal{D} \subset M$ satisfying the following properties:
\begin{itemize}
\item the vector field $X$ is tangent to the boundary $\partial \mathcal{D}$. The boundary is called the \textit{spanning orbit};
 
\;

\item the interior of the disk $\mathcal{D} \setminus \partial \mathcal{D}$ is transversal to the flow of $X$;

\;

\item  every orbit, other than the spanning orbit, intersects the interior of the disk in forward and backward time. 
\end{itemize}
The existence of a global surface of section allows one to study the dynamics by means of the global Poincar\'e return map, which is conjugated to an area-preserving diffeomorphism.  

The concept of a global surface of section was introduced by Poincar\'e \cite{Poin}. Poincar\'e's original global surface of section was annulus-like and Birkhoff  generalized  its notion to the one of arbitrary genus and with an arbitrary number of boundary components \cite{B2}.  In \cite{Conley} Conley proved that in the PCR3BP  there exists   a sufficiently small energy level $c<0$, which is independent of the mass ratio, such that the energy hypersurface with energy level less than $c$ admits a annulus-like global  surface of section. Shortly after, McGehee \cite{McGehee} used the Levi-Civita embedding to study the PCR3BP in the double covering and constructed disk-like global  surfaces of section around the heavy primary for energies below the first critical level, provided that the mass ratio is sufficiently small. 

We remark that Conley and McGehee used perturbative methods. Perturbative methods hold only for   either small energy values or  small mass ratios. Under this assumption, the PCR3BP can be regarded as a perturbation of the Kepler problem. Note that one cannot apply perturbative methods   for higher energy values  or higher mass ratios.

The first try to use global methods, more precisely, holomorphic curve techniques, to attack the PCR3BP was given by  Albers-Frauenfelder-van Koert-Paternain \cite{contact}. In that paper, they proved that for energies below and slightly above the first critical level, the regularized energy hypersurfaces of the PCR3BP admit  a compatible contact form. In view of this result, one can apply the holomorphic curve theory in the symplectization  of a closed contact three-manifold, which was introduced by Hofer \cite{Hofer}, to construct a disk-like global surface of section. More precisely, we use the following result

\begin{Theorem}\label{HWZconvex}{\rm{(Hofer-Wysocki-Zehnder, \cite{HWZ})}} Assume that $M \subset \R^4$ is a closed strictly convex hypersurface. Then $M$ admits a disk-like global surface of section.
\end{Theorem}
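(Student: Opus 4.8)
The plan is to recognize this as the theorem of Hofer-Wysocki-Zehnder and to reproduce its proof: realize the characteristic dynamics on $M$ as a Reeb flow, establish the crucial index estimate (dynamical convexity), and then construct an adapted finite-energy foliation of the symplectization by pseudoholomorphic planes, whose projections cut out the desired disk-like surface of section. First I would fix the contact geometry. After translating so that the origin lies in the strictly convex body bounded by $M$, the standard Liouville form $\lambda_0 = \tfrac12(x_1\,dy_1 - y_1\,dx_1 + x_2\,dy_2 - y_2\,dx_2)$ restricts to a contact form $\lambda := \lambda_0|_M$ precisely because $M$ is strictly convex. Its Reeb vector field $R$ is a positive reparametrization of the characteristic vector field of $M$, so closed Reeb orbits coincide with closed characteristics and a disk-like global surface of section for $R$ is one for the Hamiltonian flow on $M$.

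The decisive analytic input is \emph{dynamical convexity}: I would show that strict convexity of $M$ forces every periodic Reeb orbit $P$ to satisfy $\mu_{\mathrm{CZ}}(P) \geq 3$. The proof compares the linearized Reeb flow along $P$ with the linearization of a convex Hamiltonian whose regular level set is $M$; the second fundamental form of $M$ contributes a positive-definite term that bounds from below the rotation of the transverse linearized flow, and integrating this rotation estimate produces the Conley-Zehnder lower bound. This is the step where the hypothesis ``strictly convex'' is genuinely consumed, and everything downstream uses only the index bound, not convexity itself.

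Next I would produce the binding orbit and the foliation. Combining the existence theory for closed characteristics on convex hypersurfaces with the index estimate yields an unknotted periodic orbit $P$ with self-linking number $-1$ and $\mu_{\mathrm{CZ}}(P)=3$, the correct index for an orbit to bind a disk-like section. I would then pass to the symplectization $(\R \times M,\, d(e^s\lambda))$ with a compatible almost complex structure $J$ and study the moduli space of finite-energy $J$-holomorphic planes $\tilde u : \C \to \R \times M$ positively asymptotic to $P$. The plane has Fredholm index $\mu_{\mathrm{CZ}}(P)-1 = 2$, so after quotienting by the $\R$-translation of the symplectization and by the remaining domain reparametrizations this moduli space is, where transverse, a one-parameter family; each such plane projects to an embedded disk in $M$ that is transverse to $R$ and has boundary $P$. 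A Bishop family of small holomorphic disks emanating from $P$ provides the nonemptiness and a concrete starting point for the family.

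The main obstacle is controlling the global structure of this moduli space: I must show the family is nonempty, that the projected disks sweep out $M \setminus P$ without gaps or overlaps, and that no bubbling or breaking degenerates the curves under the compactness theorem for finite-energy curves. Here the dynamical-convexity bound is essential, since it rules out breaking along orbits of Conley-Zehnder index less than $3$ and thereby forbids the split configurations that would tear the foliation apart; positivity of intersections together with Siefring's asymptotic winding analysis keeps distinct planes disjoint and each plane embedded, so the leaves fit together into an honest foliation of $M \setminus P$. Once this stable finite-energy foliation is in place, any single leaf is a disk-like global surface of section: its interior is transverse to $R$, its boundary is the spanning orbit $P$, and transversality together with the recurrence supplied by the foliation forces every other orbit to meet the interior in both forward and backward time, completing the proof.
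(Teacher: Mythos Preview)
The paper does not prove this theorem at all: it is quoted as a background result from \cite{HWZ} and only accompanied by Remark~\ref{rmkopen}, which records that Hofer--Wysocki--Zehnder actually prove the sharper statement ``strictly convex $\Rightarrow$ dynamically convex'' and ``dynamically convex star-shaped $\Rightarrow$ disk-like global surface of section''. There is therefore no proof in the paper to compare your attempt against.

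That said, your outline is a faithful summary of the original HWZ strategy and matches what Remark~\ref{rmkopen} alludes to: first extract the Conley--Zehnder bound $\mu_{\mathrm{CZ}}\geq 3$ from strict convexity, then run the finite-energy plane machinery in the symplectization to build the open-book foliation with binding a CZ-index~$3$ orbit. As a sketch it is fine; the genuinely hard parts you wave at (automatic transversality, intersection theory \`a la Siefring to get embeddedness and the foliation property, and the compactness argument ruling out breaking via the index bound) are exactly where the length of \cite{HWZ} goes, so if you were asked to \emph{prove} the theorem rather than cite it, those are the steps that would need substantial expansion.
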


\begin{Remark}\label{rmkopen} \rm  In fact, Hofer-Wysocki-Zehnder proved more precisely that a convex hypersurface in $\R^4$ is dynamically convex and if  a closed star-shaped hypersurface in $\R^4$ is dynamically convex, then it admits a  disk-like global surface of section. Note that the notion of dynamical convexity is preserved under symplectomorphisms, but convexity is not.
\end{Remark}

Keeping  the result in \cite{contact} and Theorem \ref{HWZconvex} in mind, Albers-Fish-Frauenfelder-Hofer-van Koert proved the following.

\begin{Theorem} \label{globaltheorem} {\rm{(Albers-Fish-Frauenfelder-Hofer-van Koert, \cite{GSS})}}  For energies below the first critical value, the bounded component around the lighter primary of the Levi-Civita embedding of the regularized planar circular restricted three-body problem is convex, provided that the mass ratio is sufficiently small.
\end{Theorem}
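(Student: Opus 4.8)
The plan is to run a perturbative argument anchored at the mass ratio $\mu=0$, exploiting that the bounded component around the lighter primary, after Levi-Civita regularization, degenerates to an explicitly convex model as $\mu\to 0$. First I would write the PCR3BP Hamiltonian in the rotating frame, translate the lighter primary (mass $\mu$) to the origin, and apply the Levi-Civita substitution $q=2v^2$, $p=u/\overline{v}$ to obtain a regularized Hamiltonian $K_c^{\mu}:\C^2\cong\R^4\to\R$ whose zero level contains the compactified bounded component, diffeomorphic to $S^3$. Since the Levi-Civita map is a $2$-to-$1$ symplectic covering away from collision and removes the singularity of the lighter primary's potential, $K_c^{\mu}$ is smooth across the former collision locus (the fiber over infinity), and convexity of the embedded surface is equivalent to positive definiteness of the Hessian of $K_c^{\mu}$ restricted to the tangent spaces of $(K_c^{\mu})^{-1}(0)$.

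Second, I would identify the limiting model. Because the attraction of the lighter primary dominates the local dynamics, a suitable rescaling of position, momentum, and energy near the origin sends $K_c^{\mu}$, as $\mu\to 0$, to the regularized Hamiltonian $K^{0}$ of a rotating-Kepler (Hill-type) problem; the heavier primary's potential and the part of the Coriolis term beyond leading order enter only at higher order in the rescaling parameter. The point of this step is that $K^{0}$ is integrable and its regularized energy surface below the relevant critical energy is strictly convex, with bounding quadratic form on the tangent distribution positive definite and admitting a uniform lower bound $\delta>0$. Here I would either verify this strict convexity by a direct Hessian computation (the rotation term forbids a naive appeal to the mechanical criterion of Theorem \ref{theoremsalomao}, which assumes no momentum-position cross terms) or reduce to the non-rotating Kepler sphere and track the effect of the linear angular-momentum shear.

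Third, I would close the argument by openness and compactness. The rescaled $K_c^{\mu}$ converges to $K^{0}$ in $C^2$ \emph{uniformly} on the compact regularized surface, crucially including the former collision point, where only the Levi-Civita regularization makes the convergence meaningful. Strict positive definiteness of a quadratic form with a uniform lower bound is an open condition, so the bound $\delta>0$ for $K^{0}$ persists as a bound $\delta/2>0$ for $K_c^{\mu}$ once $\mu$ is small enough, yielding convexity of the bounded component around the lighter primary for all sufficiently small mass ratios, after checking that the smallness threshold can be taken uniform in the energy over the range below the first critical value.

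The main obstacle is the second step combined with the uniformity demanded in the third. Establishing the strict convexity of the limiting rotating-Kepler surface with a lower bound that is uniform across the energy range, and controlling it near the regularized collision point where the Levi-Civita Jacobian is most degenerate, is where the genuine work lies; the Coriolis cross term is precisely what makes this delicate, since it can destroy positivity along the directions where the Kepler quadratic form is smallest (the equatorial directions of the sphere). A secondary difficulty is making the rescaling and the $C^2$ limit honest at collision, so that the phrase \textbf{sufficiently small mass ratio} can be chosen independently of the energy level below the first critical value.
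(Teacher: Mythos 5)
You should first note that this paper does not prove Theorem \ref{globaltheorem} at all: it is quoted from \cite{GSS} as background, so the only meaningful comparison is with the argument there, which is indeed perturbative in the spirit you describe. Your overall scheme --- pass to Levi-Civita coordinates at the lighter primary, rescale, identify an explicitly convex $\mu=0$ limit, and conclude by openness of strict positive definiteness of the tangential Hessian on a compact hypersurface --- is the right skeleton.

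The concrete flaw is in your second step. You anchor the perturbation at ``the regularized Hamiltonian $K^0$ of a rotating-Kepler (Hill-type) problem'' and assert that its regularized energy surface is strictly convex below the critical energy with a uniform bound $\delta>0$. For the rotating Kepler problem this is false: Theorem \ref{convexnot} (Albers--Fish--Frauenfelder--van Koert, \cite{RKP}), quoted in this very paper, says the Levi-Civita embedding of the regularized rotating Kepler problem is \emph{not} convex for energies close to the critical Jacobi energy. The rotating Kepler problem is the $\mu\to0$ limit near the \emph{heavier} primary, where no rescaling occurs; it is the wrong model here. Near the \emph{lighter} primary the bounded component shrinks with $\mu$, and the appropriate $\mu$-dependent rescaling of $(v,u)$ pushes the Coriolis term and the heavy primary's tidal field to higher order, so the leading-order model is the Levi-Civita regularization of the \emph{non-rotating} Kepler problem, i.e.\ the ellipsoid $\tfrac12|u|^2+|c|\,|v|^2=\mathrm{const}$; that ellipsoid is where the uniform constant $\delta$ actually comes from, and your worry about the Coriolis shear destroying positivity is misplaced at leading order. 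What you correctly name but do not resolve is the uniformity in $c$: as $c$ approaches the first critical value the component has diameter of order $\mu^{1/3}$ and the rescaled limit degenerates toward Hill's lunar problem rather than Kepler (note that Theorem \ref{Lee} concerns \emph{fiberwise} convexity of that problem, which is a different notion and does not supply the needed bound). Controlling the error terms uniformly over the whole energy range, including this near-critical regime, is the substance of the proof in \cite{GSS}; without it, your proposal is an outline whose central convexity claim, as literally stated, contradicts a result quoted in this paper.
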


However, convexity of the Levi-Civita embedding fails if the mass ratio equals zero.
\begin{Theorem} \label{convexnot} {\rm{{(Albers-Fish-Frauenfelder-van Koert, \cite{RKP})}}} The image of the Levi-Civita embedding of the regularized rotating Kepler problem is not convex for energies close to the critical Jacobi energy level.
\end{Theorem}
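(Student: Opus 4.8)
The plan is to follow the strategy used for Theorem \ref{theorem2}, while accounting for the essential difference that the rotating Kepler problem is a \emph{magnetic} rather than a mechanical system, so that the criterion of Theorem \ref{theoremsalomao} is unavailable. Taking the rotating-frame Hamiltonian $H(q,p)=\tfrac12|p|^2-\tfrac1{|q|}-(q_1p_2-q_2p_1)$ with the primary at the origin, and applying the Levi-Civita substitution $q=2v^2$, $p=u/\overline v$ of \eqref{eq:Levi}, I would first record the regularized Hamiltonian
\begin{equation*}
K_c(v,u)=\tfrac12|u|^2+\langle u,\mathbf{b}(v)\rangle-c|v|^2-\tfrac12,\qquad \mathbf{b}(v)=2|v|^2(v_2,-v_1),
\end{equation*}
whose zero set $\Sigma_c:=K_c^{-1}(0)$ is the Levi-Civita image of the bounded component around the primary. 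The angular momentum survives regularization as the term $\langle u,\mathbf{b}(v)\rangle$, which is linear in the fibre variable $u$; this is exactly what prevents $K_c$ from being mechanical. A short computation gives the effective potential $U_{\mathrm{eff}}(q)=-\tfrac1{|q|}-\tfrac12|q|^2$, whose critical value (in this normalization) is $c_J=-\tfrac32$, attained along the relative-equilibrium circle $\{|q|=1\}=\{|v|^2=\tfrac12\}$.

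Next I would invoke the differential characterization of convexity of a regular level set: $\{K_c\le 0\}$ is strictly convex if and only if $\mathrm{Hess}\,K_c$ is positive definite on $\ker dK_c$ at every point of $\Sigma_c$ (equivalently, if and only if the Gauss-Kronecker curvature of $\Sigma_c$ never vanishes, the quantity used in the proof of Theorem \ref{theorem3}). Since the magnetic term is linear in $u$ and the kinetic term contributes the identity in the $u$-block, the Hessian has the block form $\begin{pmatrix}\mathrm{Hess}_{vv}&(D\mathbf{b})^{\top}\\ D\mathbf{b}&I_2\end{pmatrix}$. Eliminating the fibre directions by a Schur-complement reduction, the analogue here of passing from \eqref{eq:convex} to the function $F$ of \eqref{theorem2proof}, shows that convexity is governed by the $2\times2$ matrix
\begin{equation*}
S(v,u)=\mathrm{Hess}_{vv}K_c-(D\mathbf{b})^{\top}D\mathbf{b}=\sum_i u_i\,D^2b_i(v)-2c\,I_2-(D\mathbf{b})^{\top}D\mathbf{b}.
\end{equation*}
Indeed, for a tangent vector of the form $w=(w_v,-(D\mathbf{b})w_v)$ one has $\mathrm{Hess}\,K_c[w,w]=\langle S w_v,w_v\rangle$, so strict convexity forces $S\succ 0$ along $\Sigma_c$ away from the singular circle. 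To disprove convexity it therefore suffices to produce one point of $\Sigma_c$ at which $S$ has a nonpositive eigenvalue whose eigendirection $w_v$ can be completed to a genuine tangent vector $w$ with $dK_c\cdot w=0$.

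The candidate points should lie over the boundary of the Hill region of the bounded component. For $c<c_J$ this boundary is a circle $\{|q|=r_-(c)\}$ with $r_-(c)\nearrow 1$ as $c\nearrow c_J$, along which the momentum is pinned to the relative-equilibrium value $p=(-q_2,q_1)$; I would evaluate $S$ at the corresponding points $x_*=(v_*,u_*)\in\Sigma_c$ with $|v_*|^2=r_-(c)/2$, using the rotational symmetry to normalize $v_*=(\rho,0)$. The matrix $S(x_*)$ then reduces to two explicit functions of $\rho$ and $c$, and the goal is to show that one eigenvalue of $S(x_*)$ decreases through zero as $r_-(c)\to 1$, i.e. as $c\to c_J^-$. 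By continuity this yields a punctured left neighbourhood of $c_J$ on which $S$, and hence the second fundamental form, is indefinite, proving that $\Sigma_c$ is not convex.

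The main difficulty is twofold. First, in contrast with the mechanical case, where convexity on the boundary collapsed to the single inequality $F>0$, here both magnetic corrections $\sum_i u_i D^2b_i$ and $(D\mathbf{b})^{\top}D\mathbf{b}$ feed into $S$, and one must compute them and track their combined effect on the eigenvalues of $S$; organizing this bookkeeping (most cleanly in complex Levi-Civita notation) is where the genuine work sits. Second, one must confirm that the negative eigendirection of $S$ is \emph{realizable}, i.e. that the single linear tangency constraint $dK_c\cdot w=0$ does not exclude it; since the negative cone of an indefinite $2\times2$ form is an open pair of sectors, a generic constraint line meets it, but this has to be checked for the specific $x_*$. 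Finally, because at $c=c_J$ the relative equilibria form an entire circle of singularities rather than the isolated point allowed in Theorem \ref{theoremsalomao}, the whole continuity argument must be run at regular energies $c<c_J$, with the limit $c\to c_J^-$ taken only at the end.
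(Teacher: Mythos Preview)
The paper does not prove this statement: Theorem~\ref{convexnot} appears in Section~\ref{sec2} as a cited result from~\cite{RKP}, so there is no proof in the paper to compare against.

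That said, your outline is a plausible route to an independent proof, but it is only a sketch and the decisive computation is missing. You correctly identify that the magnetic term obstructs a direct use of Theorem~\ref{theoremsalomao}, and the Schur-complement reduction to the $2\times2$ matrix $S$ is a sensible way to organize the Hessian. However, the heart of the argument---actually evaluating $S(x_*)$ along the boundary circle and showing one eigenvalue passes through zero as $c\nearrow c_J$---is not carried out; you only assert that this is ``the goal.'' Without that calculation there is no proof. You should also be careful with the logic around the Schur complement: the vectors $w=(w_v,-(D\mathbf{b})w_v)$ form a specific $2$-plane which need not lie in $\ker dK_c$, so indefiniteness of $S$ alone does not immediately give a tangent direction of nonpositive curvature; you note this yourself, but resolving it is not a formality and requires checking the constraint explicitly at the candidate point.

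For context, the argument in~\cite{RKP} is rather different in spirit: it proceeds via the Conley--Zehnder index, showing that certain periodic orbits have index strictly less than $3$, which is incompatible with dynamical convexity and hence (via~\cite{HWZ}, cf.\ Remark~\ref{rmkopen}) with strict convexity of the Levi-Civita image. That route avoids any direct curvature computation on the regularized hypersurface.
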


Recently,  Frauenfelder-van Koert-Zhao composed the Levi-Civita embedding with the Ligon-Schaaf mapping  and proved the following.

\begin{Theorem}\label{zhao} {\rm{(Frauenfelder-van Koert-Zhao, \cite{convexRKP})}} For energies below the critical Jacobi energy, a proper combination of the Ligon-Schaaf and Levi-Civita regularization mappings provides a convex symplectic embedding of the energy hypersurfaces of the rotating Kepler problem into $\R^4$.
\end{Theorem}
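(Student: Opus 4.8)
Since the rotating Kepler Hamiltonian $H(q,p)=\tfrac12|p|^2-\tfrac{1}{|q|}-(q_1p_2-q_2p_1)$ carries a momentum-linear angular-velocity term, it is magnetic rather than mechanical, so Theorem~\ref{theoremsalomao} does not apply and, as Theorem~\ref{convexnot} shows, a bare Levi--Civita regularization cannot succeed. The reason is transparent in Levi--Civita coordinates $q=2v^2,\ p=u/\bar v$: the angular momentum becomes $L=q_1p_2-q_2p_1=2\,\mathrm{Im}(\bar v u)$, and the regularized Hamiltonian $K_c=|v|^2(H-c)$ equals $\tfrac12|u|^2-\tfrac12-c|v|^2-2|v|^2\mathrm{Im}(\bar v u)$. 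For $c<0$ the quadratic part $\tfrac12|u|^2+|c||v|^2$ is positive definite, so without rotation the level $\{K_c=0\}$ is an ellipsoid; the homogeneous quartic $2|v|^2\mathrm{Im}(\bar v u)$ is exactly what breaks convexity once the hypersurface grows, i.e.\ for $c$ near $c_J=-3/2$. The object of composing with the Ligon--Schaaf map is to replace this nonlinear rotation term by a linear one.

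The plan is as follows. First I would apply the Ligon--Schaaf symplectomorphism $\Phi_{LS}$, which conjugates the negative-energy Kepler flow to the geodesic flow on the round $S^2$ and, being equivariant for the rotational $S^1$-action, intertwines the planar angular momentum $L$ with the $S^2$-angular momentum $\Lambda$ (the momentum map of rotation about an axis on $T^*S^2$). Under $\Phi_{LS}$ the Hamiltonian $H=H_{\mathrm{Kep}}-L$ is carried to $E_{\mathrm{geo}}-\Lambda$, a function of the geodesic energy and of $\Lambda$ only; in Delaunay variables $(L,G,\ell,g)$ this is the integrable Hamiltonian $-1/(2L^2)-G$, whose bounded level $\{-1/(2L^2)-G=c,\ |G|\le L\}$ carries the circular critical orbit $G=L,\ L=1$ realizing $c_J=-3/2$ and the collision locus $G=0$. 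The decisive feature is that on $T^*S^2$ the rotation $\Lambda$ generates a \emph{linear} circle action, so after the concluding double-cover chart it becomes a linear rotation of $\C^2$ that preserves the ambient quadric.

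Next I would push the level set into $\R^4\cong\C^2$ through the Levi--Civita (double-cover) chart adapted to the spherical picture, smoothly filling in the collision fiber so that the bounded component closes up to an embedded $S^3$; this is the ``proper combination'' of the statement, and the content is to normalize $\Phi_{LS}$ (the energy at which it is applied and the attendant rescaling of $(v,u)$) so that in these coordinates the quartic term is traded for a term in which rotation acts quadratically, presenting $\{H=c\}$ as a genuine perturbation of the Kepler ellipsoid. Convexity is then checked by the second-fundamental-form test: writing the image as $\{F_c=0\}$, I would show that the Hessian of $F_c$ restricted to $\ker dF_c$ is positive definite at every point, and use the residual $S^1$-symmetry to collapse this to a two-parameter inequality in $(L,\ \text{one angle})$, expected to hold for all $c<-3/2$ and to degenerate exactly at $c=-3/2$.

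The main obstacle is twofold. Analytically, one must establish the restricted-Hessian positivity uniformly, and the two delicate regions are the collision locus $G=0$, where the double-cover chart performs the regularization and one must verify that the embedding stays smooth and convex across it, and the limit $c\to c_J^-$, where the hypersurface collapses onto the circular orbit and the Hessian loses definiteness; controlling this boundary degeneration while keeping strict positivity for every $c<c_J$ is the crux. Conceptually, because convexity is not a symplectic invariant (cf.\ the Remark following Theorem~\ref{HWZconvex}), the entire weight of the proof rests on choosing the correct normalization of the Ligon--Schaaf--Levi--Civita composition: a wrong choice merely reproduces the non-convex picture of Theorem~\ref{convexnot}, and success amounts precisely to exhibiting a combination that cancels the quartic term responsible for that failure.
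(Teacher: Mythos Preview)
This theorem is not proved in the present paper. It appears in Section~\ref{sec2} (``History and Known results'') purely as a citation of the result of Frauenfelder--van~Koert--Zhao \cite{convexRKP}; the paper neither reproduces nor sketches their argument. There is therefore no proof in the paper against which to compare your proposal.

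That said, what you have written is not a proof but a programme: you repeatedly say ``I would apply'', ``I would push'', ``I would show'', and you close by naming the ``main obstacle'' rather than resolving it. The strategic outline---use the Ligon--Schaaf map to trade the Kepler dynamics for the geodesic flow on $S^2$, exploit its equivariance so that the angular-momentum term becomes the momentum map of a \emph{linear} $S^1$-action, then pass to a Levi--Civita double cover and verify convexity via a restricted-Hessian computation reduced by symmetry---is a plausible reading of what the phrase ``proper combination of the Ligon--Schaaf and Levi--Civita regularization mappings'' should mean. But the substance of the theorem lies precisely in the two steps you flag as unfinished: (i) specifying the exact composition and normalization so that the image in $\R^4$ is explicit enough to compute on, and (ii) carrying out the positivity estimate uniformly in $c<c_J$, including across the collision locus and in the degeneration $c\to c_J^-$. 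Until those are done you have a heuristic, not a proof; to actually establish the result you would need to consult \cite{convexRKP} and reproduce (or replace) the concrete computations there.
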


On the other hand, as mentioned in the introduction, one can ask if the regularized PCR3BP is fiberwise convex. Cieliebak-Frauenfelder-van Koert give a positive answer to this question,  provided that the mass ratio equals zero. 

\begin{Theorem}\label{finslder}{\rm{(Cieliebak-Frauenfelder-van Koert, \cite{Finsler})}} For energies below the critical Jacobi energy, the bounded components of the regularized rotating Kepler problem are fiberwise convex.
\end{Theorem}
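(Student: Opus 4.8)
The plan is to turn fiberwise convexity into the positivity of a single quartic in the radius $r=|q|$. Write the rotating Kepler Hamiltonian as
\[
H(q,p)=\tfrac12|p|^2-\frac{1}{|q|}+(q_1p_2-q_2p_1).
\]
By the reduction recorded just before the theorem — the chart transitions of $T^*S^2$ are linear on the fibers, so they preserve fiberwise convexity — it suffices to prove that for every finite momentum $p$ the plane curve $\{q:H(q,p)=c\}$ bounds a strictly convex domain, the fiber over $\infty$ being a round circle. Fix $p$ and set $f(q):=H(q,p)$. Since $\{f=c\}$ is a plane curve, its strict convexity is equivalent to the positivity of the curvature numerator
\[
N=f_{q_1q_1}f_{q_2}^2-2f_{q_1q_2}f_{q_1}f_{q_2}+f_{q_2q_2}f_{q_1}^2,
\]
which has the same shape as $F$ in $(\ref{theorem2proof})$. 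The decisive simplification is that the Coriolis term $q_1p_2-q_2p_1$ is linear in $q$ and hence invisible to the Hessian: only $-1/|q|$ contributes to $\mathrm{Hess}(f)$.

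First I would carry out the curvature computation. With $p^{\perp}=(-p_2,p_1)$ one has $\nabla f=r^{-3}q-p^{\perp}$ and $\mathrm{Hess}(f)=r^{-3}I-3r^{-5}qq^{T}$, while along the fiber the energy relation reads $1/r=\beta-\langle p^{\perp},q\rangle$ with $\beta:=\tfrac12|p|^2-c>0$. This lets me eliminate $\langle p^{\perp},q\rangle$, and then, via $\langle p,q\rangle^2=|p|^2r^2-\langle p^{\perp},q\rangle^2$, also $\langle p,q\rangle$. After substitution the curvature numerator collapses to
\[
N=\frac{1}{r^{7}}\,\Pi(r),\qquad \Pi(r)=6-8\beta r+3\beta^2r^2-2|p|^2r^4,
\]
so fiberwise convexity is equivalent to $\Pi>0$ along every fiber. (For $p=0$ the fiber is the circle $r=1/\beta$ and $\Pi=1>0$, the expected sanity check.)

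Next I would establish positivity of $\Pi$ and isolate the role of $c_J$. The radii realized on the fiber are exactly those with $|1-\beta r|\le|p|r^2$, whose bounded branch is an interval $[r_a,r_b]$, with $r_a$ the positive root of $|p|r^2+\beta r-1=0$ and $r_b$ the smaller root of $|p|r^2-\beta r+1=0$; the latter is real precisely when $\beta^2\ge 4|p|$. At either endpoint the defining constraint holds with equality, forcing $|p|^2r^4=(1-\beta r)^2$ and hence $\Pi=(\beta r-2)^2\ge0$; this can vanish only at the outer endpoint and only when $\beta r_b=2$, which, using $|p|r_b^2=\beta r_b-1$, is equivalent to $\beta^2=4|p|$. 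Minimizing $(\tfrac12|p|^2-c)^2-4|p|$ over $p$ shows that this degeneration first occurs at $c=c_J$, attained at $|p|=1$ — exactly the momentum of the circle of relative equilibria $|q|=1$. Hence for $c<c_J$ one has $\beta^2>4|p|$ for every $p$, so every radius on the fiber satisfies $x:=\beta r\le\beta r_b<2$; using $|p|^2/\beta^4<1/16$, the bound $\Pi(r)>6-8x+3x^2-\tfrac18x^4=:\phi(x)$ together with $\phi'(x)=-\tfrac12(x-2)^2(x+4)\le0$ and $\phi(2)=0$ gives $\Pi>\phi(x)>0$ on the whole fiber. The argument is local to each fiber and applies verbatim to any bounded component.

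The main obstacle is this last step: positivity of $\Pi$ must hold uniformly over all fibers, and since $\Pi$ is a quartic with negative leading coefficient a crude estimate will not do. The crux is to show that convexity can only degenerate at the outer endpoint of a fiber, where $\Pi=(\beta r_b-2)^2$, and to identify the threshold $\beta^2=4|p|$ there with the critical Jacobi energy through the relative equilibrium; the uniform comparison with $\phi$ then closes the argument. Handling the fiber over $\infty$ and justifying that regularization does not affect convexity are comparatively routine, being precisely the cotangent-bundle reduction recorded before the statement.
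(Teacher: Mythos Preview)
The paper does not supply a proof of this statement. Theorem~\ref{finslder} is quoted in Section~\ref{sec2} (``History and Known results'') as a result of Cieliebak--Frauenfelder--van Koert \cite{Finsler}, alongside several other cited theorems, to contextualize the paper's own contributions on the Euler problem. There is therefore no argument in the present paper against which to compare your attempt.

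Your argument is nonetheless sound and self-contained. The computation of the curvature numerator $N=r^{-7}\Pi(r)$ is correct (the crucial observation being that the Coriolis term, linear in $q$, drops out of the Hessian), and the endpoint identity $\Pi=(\beta r-2)^2$ when $|p|^2r^4=(1-\beta r)^2$ checks. Your identification of the threshold $\beta^2=4|p|$ with $c=c_J=-3/2$ via the relative equilibrium $|q|=|p|=1$ is also right: setting $s=|p|$, the function $(\tfrac12 s^2-c)^2-4s$ is strictly convex in $s$ (second derivative $3s^2-2c>0$), with unique minimum at $s_0$ solving $s_0^3-2cs_0=4$, and the minimum value equals $4(s_0^{-2}-s_0)$, which is positive precisely when $s_0<1$, i.e.\ when $c<-3/2$. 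The comparison $\Pi>\phi$ with $\phi'(x)=-\tfrac12(x-2)^2(x+4)\le 0$ and $\phi(2)=0$ then closes the estimate cleanly on $x\in(0,2)$. The only points deserving a line of justification are that for $c<c_J$ the bounded component indeed projects onto all of $S^2$ and that its fiber over each finite $p$ is exactly the $r\in[r_a,r_b]$ branch (both routine).

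For context, the paper's own fiberwise-convexity proof for the Euler problem (Theorem~\ref{theorem1}, Section~\ref{sec4}) proceeds in the same spirit---reduce to positivity of the curvature numerator of the Hill's-region boundary---but there the presence of two attracting centers produces the expression \eqref{nusdoihs}, which depends on both $r_1$ and $r_2$ and forces the lengthy case analysis of Lemmas~\ref{rderi}--\ref{thetaderi}. Your one-variable reduction, made possible by the linearity of the Coriolis term and the single center, is precisely what makes the rotating Kepler case so much more tractable.
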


The Hill's lunar problem was introduced by Hill \cite{Hill} to study the stability of orbits of the Moon. This problem is a limit case of PCR3BP where the mass of the heavier primary diverges to the infinity and the massless body moves in a very small neighborhood of the lighter primary. For this problem, the result on fiberwise convexity was proved by Lee.

\begin{Theorem}\label{Lee}{\rm{(Lee, \cite{Lee})}} The bounded components of the regularized Hill's lunar problem are fiberwise convex for energies below the critical value.
\end{Theorem}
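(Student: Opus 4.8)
The plan is to mirror the fiberwise–convexity strategy used for Theorem~\ref{theorem1}, while accounting for the essential new feature of the Hill lunar problem: in a rotating frame its Hamiltonian carries a Coriolis term linear in the momentum, so that it takes the representative form
\begin{equation*}
H(q,p) = \tfrac12\abs{p}^2 + p_1 q_2 - p_2 q_1 + W(q), \qquad W(q) = -\frac{1}{\abs{q}} - q_1^2 + \tfrac12 q_2^2 .
\end{equation*}
Because of this term $H$ is not mechanical, so Salom\~ao's criterion (Theorem~\ref{theoremsalomao}) does not apply directly. As in the discussion preceding Theorem~\ref{theorem1}, I would switch the roles of $q$ and $p$, compactify the base to $S^2$, and reduce fiberwise convexity of the bounded component to showing that for each fixed $p$ the planar curve $\{q : H(q,p)=c\}$ bounds a strictly convex domain. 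Fixing $p$ only adds a linear function of $q$ to $W$, so the Hessian of the fiber function equals $\nabla^2 W(q)$, while its gradient is shifted to $\nabla_q H = \nabla W(q) + (-p_2,p_1)$. Strict convexity is then equivalent to positivity of the curvature numerator
\begin{equation*}
N(q,p) = \big(\nabla_q H\big)^{\!\perp}\!\cdot \nabla^2 W(q)\,\big(\nabla_q H\big)^{\!\perp}
\end{equation*}
at every point of the bounded component of $H^{-1}(c)$, where $(\cdot)^{\perp}$ denotes rotation by $\pi/2$. This is the exact analogue of the quantity $F$ in \eqref{theorem2proof} and of the curvature computation carried out in Section~\ref{sec4}.

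First I would record $\nabla^2 W$ explicitly: the tidal part contributes the constant indefinite form $\mathrm{diag}(-2,1)$, while the Kepler part contributes $\abs{q}^{-3}\big(I - 3\,\widehat q\,\widehat q^{\,\top}\big)$, with eigenvalue $-2\abs{q}^{-3}$ in the radial and $+\abs{q}^{-3}$ in the tangential direction. Next I would exploit the fiber geometry: for fixed $q$ in the open Hill region the set $\{p : H(q,p)=c\}$ is a circle of radius $R=\sqrt{2(c-W(q))+\abs{q}^2}$, and as $p$ runs over it the gradient $\nabla_q H$ traces a circle of radius $R$ about $\nabla W(q)-q$ in gradient space. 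Hence $N>0$ has to be verified for the whole one–parameter family of tangent directions produced by each fiber, uniformly over the bounded component. I would cut this down using the two reflection symmetries of the Hill problem and the fact that $c$ is strictly below the critical value, so that the bounded component avoids the Lagrange points and $R$, $\abs q$ stay controlled.

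The collision locus needs separate treatment. Regularizing with the Levi--Civita map $q=2v^2,\ p=u/\overline v$ as in \eqref{eq:Levi}, the bounded component compactifies to a sphere $S^3$, and the fiber over the collision (the point at infinity of the switched base) becomes the round circle $\{\abs{u}=1\}$, which is manifestly convex. Near collision the same conclusion follows from $N$ itself: there $\abs{\nabla W}\sim \abs q^{-2}$ dominates the fiber radius $R\sim\abs q^{-1/2}$, so $\nabla_q H$ is forced to be nearly radial, the tangent direction is nearly tangential, and $N$ is governed by the positive tangential eigenvalue $+\abs q^{-3}$ of the Kepler Hessian; thus $N>0$ for $\abs q$ small, uniformly in $p$.

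The main obstacle is the global sign analysis away from collision, where the Kepler contribution to $\nabla^2 W$ is small and the dominant tidal Hessian $\mathrm{diag}(-2,1)$ is \emph{indefinite}. There $R$ can be comparable to $\abs{\nabla_q H}$, so the tangent direction sweeps a genuine arc, and one must show it never enters the negative cone of $\nabla^2 W$ on the bounded component. I expect this to reduce, after using the energy relation to eliminate $\abs p^2$, to a single inequality in $\abs q$ and the polar angle of $q$ to be established on the closure of the bounded component; proving that this inequality has no sign change below the critical value is the crux. As a consistency check and a source of estimates, I would compare with the rotating Kepler problem (Theorem~\ref{finslder}), of which the Hill lunar problem is a scaling limit, use continuity to isolate the region where the tidal term could threaten convexity, and then close the argument by the explicit curvature estimate there.
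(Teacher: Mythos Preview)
The paper does not prove this statement. Theorem~\ref{Lee} appears in Section~\ref{sec2} (``History and Known results'') purely as a citation of Lee's result \cite{Lee}; no argument is given or even sketched here. There is therefore no ``paper's own proof'' against which to compare your proposal.

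That said, what you have written is a strategy outline, not a proof. You correctly identify that the Coriolis term makes the Hill Hamiltonian non-mechanical, so the reduction used for Theorem~\ref{theorem1} (where fiberwise convexity is equivalent to convexity of the Hill's region at shifted energies) is not available verbatim, and you set up the appropriate curvature quantity $N(q,p)$. You also correctly isolate the easy regime near collision. But the decisive step---showing that $N>0$ on the bounded component away from collision, where the tidal Hessian $\mathrm{diag}(-2,1)$ is indefinite and can dominate---is left as ``I expect this to reduce \dots\ to a single inequality \dots\ proving that this inequality has no sign change below the critical value is the crux.'' That is precisely the content of the theorem; you have not supplied it. Nothing in your outline explains why the tangent direction of the fiber curve should avoid the negative cone of $\nabla^2 W$ throughout the bounded component, and your appeal to continuity from the rotating Kepler problem (Theorem~\ref{finslder}) cannot close this gap, since the Hill problem is a singular limit and the relevant estimates are not uniform under that rescaling. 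If you want to actually prove the result along these lines you must carry out the explicit sign analysis that Lee does in \cite{Lee}.
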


\section{Proof of  Theorem \ref{theorem3}}\label{sec6}

We assume that $\mu \leq 1/2$ so that either the Earth is heavier than the Moon or the two primaries have the equal mass. Recall that the regularized Hamiltonian in the doubly-covered elliptic coordinates is given by
\begin{equation}
Q  = (H-c)(\cosh^2 \lambda - \cos^2 \nu) = Q^1 + Q^2,
\end{equation}
where $Q^1 = 2p_{\lambda}^2 -2\cosh\lambda -c \cosh^2\lambda$ and $Q^2= 2p_{\nu}^2 + 2(1-2\mu)\cos \nu + c \cos^2 \nu$.  To prove  Theorem \ref{theorem3},  we use the following criterion which shows that the condition for an energy hypersurface  to be convex can be expressed in terms of the potential function.

\begin{Theorem}\label{theoremsalomao}{\rm{(Pedro A. S. Salom$\tilde{a}$o, \cite{salomao})}} Let $H: \R^4 \rightarrow \R$ be a mechanical Hamiltonian. Assume that the potential function, denoted by $V= V(x,y)$, is a $C^{k\geq 2}$ function. Suppose that $S\subset H^{-1}(c)$ is homeomorphic to $S^3$, invariant by the Hamiltonian flow and has at most one singularity $p_c$. Let $\pi : \R^4 \rightarrow \R^2 $ be the projection along the fiber. Then $S$ is strictly convex if and only if 
\begin{equation}\label{eq:convex}
2(c-V)(V_{xx}V_{yy} - V_{xy}^2) + V_{xx}V_y^2 + V_{yy}V_x^2 - 2 V_x V_y V_{xy} >0
\end{equation}
for all points in $\pi(S) \setminus \pi(p_c)$. 
\end{Theorem}

We apply this theorem to the potential 
$$
V(\lambda, \nu) = -\frac{1}{2}\cosh \lambda - \frac{c}{4}\cosh^2 \lambda + \frac{1-2\mu}{2}\cos\nu + \frac{c}{4} \cos^2\nu
$$
of the Hamiltonian $Q/4$ and it then suffices to show that  $A(\cosh \lambda, \cos\nu)$ is nonvanishing, where the function $A$ is defined by
\begin{eqnarray*}
A&:=&(cx^2 + 2x - cy^2 - 2(1-2\mu)y)( 2c x^2 +x -c)( 2cy^2 + (1-2\mu)y -c) \\
&&- (1-y^2)( 1-2\mu +cy)^2 ( 2cx^2 +x -c)- (x^2-1)(1+cx)^2 ( 2cy^2 + (1-2\mu)y -c).
\end{eqnarray*}
Recall that the domain of the function $A$ is given as follows  
\begin{equation}\label{ellipticdomain}
\begin{cases}    x \in [1, \frac{ -1-\sqrt{  c^2 - 2(1-2\mu)c +1}}{c}], \; y \in [-1, \frac{ -(1-2\mu) + \sqrt{ c^2 +2 c + (1-2\mu)^2}}{c} ] & \text{ in the   Earth component} \\ x \in [1, \frac{ -1-\sqrt{  c^2 + 2(1-2\mu)c +1}}{c}], \; y \in [ \frac{ -(1-2\mu) - \sqrt{ c^2 +2 c + (1-2\mu)^2}}{c} , 1] & \text{ in the Moon component}                              \end{cases}
\end{equation}
see \cite{Bifurcation}, \cite{Kim}.

\begin{Lemma} There exist $a=a(\mu)$,  $b=b(\mu) \in \R$, $-1<a< 0 <b<1$, having the property that $A>0$ for any $x$ and for $y \in [-1, a] \cup [b, 1]$. 
\end{Lemma}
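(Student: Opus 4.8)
The plan is to prove the inequality first on the two horizontal edges $y=1$ and $y=-1$, where $A$ degenerates into a product, and then to spread positivity to a neighbourhood of these edges by a compactness argument. With $x=\cosh\lambda$ and $y=\cos\nu$ as above, abbreviate
\[
R_\lambda=2cx^2+x-c,\qquad R_\nu=2cy^2+(1-2\mu)y-c,\qquad P=cx^2+2x-cy^2-2(1-2\mu)y,
\]
so that $A=P\,R_\lambda R_\nu-(1-y^2)(1-2\mu+cy)^2R_\lambda-(x^2-1)(1+cx)^2R_\nu$. At $y=\pm1$ the middle summand drops out because $1-y^2=0$, and $A$ factors as
\[
A\big|_{y=\pm1}=R_\nu(\pm1)\,B_\pm(x),\qquad B_\pm(x):=P(x,\pm1)\,R_\lambda(x)-(x^2-1)(1+cx)^2,
\]
where $P(x,1)=cx^2+2x-c-2(1-2\mu)$ and $P(x,-1)=cx^2+2x-c+2(1-2\mu)$. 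I would note at once that $P$ is the first factor of $A$, which by the identity recorded just before the definition of $A$ equals $2(p_\lambda^2+p_\nu^2)$ and is therefore $\ge0$ on the domain; moreover $P(x,1)$ and $P(x,-1)$ are exactly the quadratics whose positive roots give the endpoints $x_{\max}$ of the Moon and Earth $x$-ranges in (\ref{ellipticdomain}).

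The sign of $A|_{y=\pm1}$ then follows from three elementary sign facts. First, the prefactors $R_\nu(1)=c+1-2\mu$ and $R_\nu(-1)=c-(1-2\mu)$ are both negative, since $c<c_J<-1$ and $0\le 1-2\mu<1$. Second, $R_\lambda$ is a downward parabola with $R_\lambda(1)=c+1<0$ and vertex at $x=-1/(4c)<1$, so $R_\lambda(x)<0$ for every $x\ge1$. Third, on the $x$-interval attached to the relevant edge in (\ref{ellipticdomain}) one has $P(x,\pm1)\ge0$, with equality only at the right endpoint $x=x_{\max}$. Hence both summands of $B_\pm$ are nonpositive: $P(x,\pm1)R_\lambda(x)\le0$, vanishing only at $x=x_{\max}$, and $-(x^2-1)(1+cx)^2\le0$, vanishing only at $x=1$ (note $1+cx\le1+c<0$). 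Since these two vanishing loci are disjoint, because $x_{\max}>1$, the sum $B_\pm(x)$ is strictly negative on $[1,x_{\max}]$, and therefore $A|_{y=\pm1}=R_\nu(\pm1)\,B_\pm>0$.

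To finish I would upgrade this edge estimate to a genuine neighbourhood. The function $A$ is a polynomial, hence continuous, and the domain (\ref{ellipticdomain}) is compact, so $A$ attains a positive minimum on each of the two compact edges $\{y=-1\}$ and $\{y=1\}$. If no $a,b$ as claimed existed, one could choose domain points with $y\to-1$ (respectively $y\to1$) on which $A\le0$; a convergent subsequence would produce a limit point on the corresponding edge with $A\le0$, contradicting the strict positivity just established. This yields $a\in(-1,0)$ and $b\in(0,1)$ with $A>0$ for all admissible $x$ whenever $y\in[-1,a]\cup[b,1]$, as required. I expect the main difficulty to lie in the second paragraph, namely in matching the third sign fact to (\ref{ellipticdomain}): one must verify that the $x$-ranges recorded there for the Earth and Moon components coincide precisely with $\{P(x,-1)\ge0\}$ and $\{P(x,1)\ge0\}$, so that the sign of $P$ is pinned down, and must treat the two boundary values $x=1$ and $x=x_{\max}$ by hand, since each is where one of the two summands of $B_\pm$ degenerates.
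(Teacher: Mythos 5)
Your proof is correct, but it takes a genuinely different route from the paper's. The paper chooses $a$ and $b$ explicitly as the two roots of $2cy^2+(1-2\mu)y-c$ in $(-1,1)$ (located by evaluating at $y=-1,0,1$), so that on all of $[-1,a]\cup[b,1]$ the factor $2cy^2+(1-2\mu)y-c$ is nonpositive; together with $2cx^2+x-c<0$, $cx^2+2x-cy^2-2(1-2\mu)y=2(p_\lambda^2+p_\nu^2)\geq 0$, $x\geq 1$, $|y|\leq 1$ and $1-2\mu+cy\neq 0$, all three summands of $A$ are then nonnegative on the whole band, and one checks they cannot vanish simultaneously. You instead prove strict positivity only on the two edges $y=\pm 1$, where the middle summand drops and $A$ factors as $R_\nu(\pm 1)B_\pm(x)$, and then propagate it by compactness. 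Your edge analysis is sound: the identification of the $x$-ranges in the domain with $\{P(x,\pm 1)\geq 0\}$ does hold (the stated upper endpoints are exactly the positive roots of $P(x,\pm 1)$, and $P(1,1)=4\mu>0$, $P(1,-1)=4(1-\mu)>0$), the two degeneration loci $x=1$ and $x=x_{\max}$ are disjoint, and the limit-point argument on the compact domain is valid. What you lose is constructivity, and this matters for how the lemma is consumed: the paper's $a,b$ are precisely the roots of $2cy^2+(1-2\mu)y-c$, which is what allows Corollary \ref{cisrcor} to define $c_E,c_M$ by solving $a=y_+(c_E)$, and the subsequent analysis on $y\in(a,b)$ repeatedly uses the sign $2cy^2+(1-2\mu)y-c>0$ there. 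Your (unspecified, generally smaller) neighbourhoods of $y=\pm 1$ would not feed into those later steps without reworking them; also, the paper's argument yields the stronger conclusion that $A>0$ on the entire region where $2cy^2+(1-2\mu)y-c\leq 0$, which your compactness argument does not recover.
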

\begin{proof} We first claim that $2cx^2 + x - c <0$. To see this, we observe that it has two real roots since the discriminant is positive: $1+8c^2 >0$.  By Vieta's formulas   the product of the two roots is negative. Moreover, plugging $x=1$ into $2cx^2 + x - c$   gives rise to $c+1<0$. Since the leading coefficient $2c$ is negative, we then conclude that the positive root is less than $1$. This proves the claim.

Abbreviate by $f(y) = 2cy^2 + (1-2\mu)y - c$. We compute that
\begin{eqnarray*}
f(-1) &=& 2c - (1-2\mu) -c = c - (1-2\mu) <0\\
f(0) &=& -c >0\\
f(1) &=& 2c + (1-2\mu) - c = c+(1-2\mu) <0.
\end{eqnarray*}
This implies that $f$ has two real roots $a,b$ such that $-1<a<0<b<1$ and satisfies $f<0$ for $y \in [-1, a) \cup (b, 1]$ and $f>0$ for $a<y<b$. On the other hand, for $c<c_J$ the term  $1-2\mu + cy $  never equals  zero in the domain (\ref{ellipticdomain}). 

To prove the lemma, it now suffices to show that  at least one term among $cx^2 + 2x - cy^2 - 2(1-2\mu)y$, $1-y^2$, $x^2-1$ and $1+cx$ does not equal zero. Indeed, that $x^2 -1 = 1+cx = 0$ implies that $c=-1$, which contradicts to $c<c_J$.  This finishes the proof of the lemma.
\end{proof}

As a direct consequence, we obtain 
\begin{Corollary} \label{cisrcor}There exists $c_E < c_J$ (or $c_M < c_J$) such that for $c<c_E$ (or $c<c_M$) ,  the regularized Earth (or Moon) component is convex.
\end{Corollary}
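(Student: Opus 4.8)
The plan is to read off the Corollary from the preceding Lemma by an elementary comparison of intervals. As set up above, strict positivity of the Gauss-Kronecker curvature of $Q^{-1}(0)$ guarantees convexity of the regularized Earth (resp.\ Moon) component, and once the first Lemma rules out the vanishing of $Q_\lambda^2+Q_\nu^2+Q_{p_\lambda}^2+Q_{p_\nu}^2$, this positivity is equivalent to positivity of the function $A$ on the whole domain (\ref{ellipticdomain}). The second Lemma only delivers $A>0$ on the region where $y\in[-1,a]\cup[b,1]$ (for every admissible $x$), so the task reduces to showing that, for sufficiently negative energy, the $y$-range of the component in (\ref{ellipticdomain}) is entirely contained in this good set.

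First I would make this reduction precise for the Earth component, whose $y$-range is $[-1,y_+]$ with $y_+=\frac{-(1-2\mu)+\sqrt{c^2+2c+(1-2\mu)^2}}{c}$: it suffices to produce $c_E<c_J$ with $y_+<a$ for all $c<c_E$, for then $[-1,y_+]\subset[-1,a]$ and the Lemma yields $A>0$ throughout. To obtain such a $c_E$ I would compute both quantities in the limit $c\to-\infty$. Expanding the square root shows $y_+\to-1$, while Vieta's formulas for $f(y)=2cy^2+(1-2\mu)y-c$ give $ab=-\tfrac12$ and $a+b=\frac{2\mu-1}{2c}\to0$, whence $a\to-\tfrac{1}{\sqrt2}$ and $b\to\tfrac{1}{\sqrt2}$. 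Therefore $y_+-a\to-1+\tfrac{1}{\sqrt2}<0$, and since this limit is strictly negative, $y_+<a$ holds on an entire half-line $c<c_E$; no monotonicity in $c$ is needed. The Moon component is symmetric: its lower endpoint $y_-=\frac{-(1-2\mu)-\sqrt{c^2+2c+(1-2\mu)^2}}{c}\to1$ while $b\to\tfrac{1}{\sqrt2}$, so $y_->b$ for $c<c_M$ and $[y_-,1]\subset[b,1]$.

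The main obstacle is simply to carry out the two asymptotic expansions consistently to leading order in $1/c$ and to confirm that the limiting gaps $-1+\tfrac{1}{\sqrt2}$ and $1-\tfrac{1}{\sqrt2}$ are strictly signed, since the whole argument hinges on that strictness. Along the way I would also check the mild nondegeneracy facts that $y_+>-1$ and $y_-<1$ in the relevant energy range, so that the shrunken intervals are genuine nonempty subintervals of the domain, and that the endpoint values $y=a$, $y=b$ (where $A$ collapses to a single manifestly positive term, using $2cx^2+x-c<0$ and $1-2\mu+cy\neq0$) cause no trouble. Granting these, positivity of $A$ on the entire domain yields positive Gauss-Kronecker curvature and hence convexity, proving the Corollary.
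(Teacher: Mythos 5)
Your proposal is correct and follows essentially the same route as the paper: both reduce the Corollary to showing that the $y$-range $[-1,y_+]$ (resp.\ $[y_-,1]$) from (\ref{ellipticdomain}) is contained in the good set $[-1,a]$ (resp.\ $[b,1]$) supplied by the preceding Lemma. The only difference is that the paper defines $c_E,c_M$ implicitly as the crossing values $a=y_+(c_E)$, $b=y_-(c_M)$, whereas you certify the containment for sufficiently negative $c$ by the limits $y_+\to-1$, $a\to-\tfrac{1}{\sqrt2}$, $y_-\to1$, $b\to\tfrac{1}{\sqrt2}$ — a slightly more careful version of the same argument.
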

\begin{proof} By (\ref{ellipticdomain}) and the previous lemma, we just need to define $c_E$ and $c_M$ by energy values satisfying
\begin{equation*}
a= \frac{ -(1-2\mu) + \sqrt{ c_E^2 +2 c_E + (1-2\mu)^2}}{c_E}
\end{equation*}
and
\begin{equation*}
b=\frac{ -(1-2\mu) - \sqrt{ c_M^2 +2 c_M + (1-2\mu)^2}}{c_M}.
\end{equation*}
\end{proof}

In what follows, we assume that $y \in (a,b)$, provided that $c$ is sufficiently large. For a fixed $x$, we abbreviate by $A_x(y) := A(x,y)$ and differentiate
\begin{equation*}
A_x'(y) = ( 1-2\mu + 4cy)( - c(2cx^2 + x - c)y^2 - 2( 2cx^2 + x - c)(1-2\mu)y + c^2x^4 + 3cx^3 + x^2 +1 ).
\end{equation*}
We assume that $ - c(2cx^2 + x - c)y^2 - 2( 2cx^2 + x - c)(1-2\mu)y + c^2x^4 + 3cx^3 + x^2 +1 $ admits a real root $y_0 = y_0(x)$. Using this we obtain that
\begin{equation*}
A_x (y_0) = (2cx^2 + x - c)( y_0^2 -1)( cy_0 + 1-2\mu)^2.
\end{equation*}
Since $cy + 1-2\mu$ is nonvanishing by  (\ref{ellipticdomain}) and $ y \in (a,b)$,   we conclude that $A_x(y_0) >0$.

 We now consider the remaining root $y = -(1-2\mu)/4c$ of $A'_x(y)$ and observe that 
\begin{equation*}
A_x \bigg(\frac{1-2\mu}{-4c} \bigg) = -\bigg( \frac{(1-2\mu)^2}{ 8c} + c\bigg) \bigg( c^2 x^4 + 3cx^3 + x^2 +1\bigg) - \bigg(2cx^2 + x - c\bigg)\bigg( \frac{ 5(1-2\mu)^4 }{256c^2} + (1-2\mu)^2\bigg).
\end{equation*}
It follows that $A_x( -(1-2\mu)/4c)>0$ if $c^2 x^4 + 3cx^3 + x^2 +1>0$. Assume that $c^2 x^4 + 3cx^3 + x^2 +1 =0$. If $ \mu \neq 1/2$, it is obvious that $A_x( -(1-2\mu)/4c) >0$. If $\mu = 1/2$, then we have $ A_x(-(1-2\mu)/4c ) = A_x(0) = 0$. Recall that  the variable $y$ varies in $ [-1, 0)$ for the Earth component and in $ (0, 1]$ for the Moon component. Then by means of the fact that $A_x$ is a quartic polynomial of $y$ whose leading coefficient is $-c^2 ( 2cx^2 + x - c) >0$, the arguments so far prove the following proposition which improves Corollary \ref{cisrcor}.

\begin{Proposition}\label{oriosgesd} There exists $c_E' \in (c_E, c_J)$ (or $c_M' \in (c_M, c_J)$)  such that the assertion of Corollary \ref{cisrcor} holds true. In particular, we have $c_E'=c_M'=c_J =-2$ for $\mu=1/2$ and hence if the two primaries have the equal mass, then each compact component bounds a strictly convex region for any energy below the critical Jacobi energy.
\end{Proposition}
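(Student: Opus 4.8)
The plan is to convert the critical-value computations carried out above into a statement about the global minimum of $A_x$, and thereby decide the sign of $A$ on the domain (\ref{ellipticdomain}). Since positivity of $A$ there is equivalent to positivity of the Gauss--Kronecker curvature of $Q^{-1}(0)$, and hence to strict convexity, and since the preceding lemma already yields $A>0$ for $y\in[-1,a]\cup[b,1]$, everything comes down to the behaviour of $A_x$ for $y\in(a,b)$, with $x$ ranging over $[1,x_{\max}]$. The structural observation I would use is that $A_x$ is a quartic in $y$ with positive leading coefficient $-c^2(2cx^2+x-c)$; such a polynomial tends to $+\infty$ at both ends, so it is positive on all of $\R$ as soon as every one of its critical values is positive.

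The critical points are the zeros of $A_x'=(1-2\mu+4cy)\cdot(\text{quadratic})$. At the roots $y_0$ of the quadratic factor the computation gives $A_x(y_0)=(2cx^2+x-c)(y_0^2-1)(cy_0+1-2\mu)^2>0$, using $2cx^2+x-c<0$, $y_0^2-1<0$ for $y_0\in(a,b)$, and $cy_0+1-2\mu\neq0$ on (\ref{ellipticdomain}); these critical values are therefore never an obstruction. The only remaining critical point is $y_*:=-(1-2\mu)/4c$, and the explicit expression for $A_x(y_*)$ shows that its sign is controlled by $g(x):=c^2x^4+3cx^3+x^2+1$: the summand carrying the factor $-(2cx^2+x-c)$ is nonnegative, while the summand carrying $g(x)$ is positive exactly when $g(x)>0$. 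Consequently, whenever $g>0$ throughout $[1,x_{\max}]$ every critical value of every $A_x$ is positive, so $A>0$ on the whole domain and the component is strictly convex.

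I would then read off the equal-mass statement from a sign analysis of $g$. One has $g(1)=(c+1)(c+2)$, which is positive for $c<-2$, and $g$ is increasing on $[1,\infty)$ because the roots $-2/c$ and $-1/(4c)$ of $g'(x)/x=4c^2x^2+9cx+2$ both lie in $(0,1)$ when $c<-2$; hence $g>0$ on all of $[1,\infty)\supset[1,x_{\max}]$ for every $c<-2$. This already gives strict convexity for all $c<-2$ and all $\mu\le1/2$. For $\mu=1/2$ one has $c_J=-2$, so this covers the entire admissible range $c<c_J$ and yields $c_E'=c_J=-2$; the Moon component follows from the $\mu\leftrightarrow1-\mu$ symmetry, giving $c_M'=c_J$ as well. (The borderline possibility $g=0$ is harmless here, since for $\mu=1/2$ it would force $A_x(y_*)=A_x(0)=-c\,g(x)=0$ only at $y_*=0$, which for $c<-2$ lies strictly outside the $y$-range $[-1,y_E]$ with $y_E=\sqrt{c^2+2c}/c<0$.)

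For $\mu<1/2$ the existence of an improved threshold $c_E'>c_E$ I would get by continuity: at $c=c_E$ the domain is the compact rectangle with $y\in[-1,a]$, on which the preceding lemma gives $A>0$ strictly, and both $A$ and the domain depend continuously on $c$, so positivity survives on a strictly larger energy interval. The main obstacle---and the reason the threshold cannot be pushed to $c_J$ when $\mu<1/2$---is the critical point $y_*=-(1-2\mu)/4c>0$ together with the moving upper boundary $y_E$: for $c<-2$ one has $y_E<0<y_*$, so $y_*$ sits outside the domain and causes no trouble, but as $c\uparrow c_J\in(-2,-1)$ the boundary $y_E$ becomes positive while $g(1)=(c+1)(c+2)$ turns negative, so $y_*$ enters the domain at an $x$ near $1$ where $g<0$ and $A_x$ is driven below zero. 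Locating this crossing is exactly the delicate point, and it is what underlies the non-convexity for energies near $c_J$ asserted in Theorem \ref{theorem3}.
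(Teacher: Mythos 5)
You follow the paper's own route: reduce strict convexity to positivity of $A$ on the domain (\ref{ellipticdomain}), regard $A_x$ as a quartic in $y$ with positive leading coefficient $-c^2(2cx^2+x-c)$, compute its critical values at the roots of the quadratic factor of $A_x'$ and at $y_*=-(1-2\mu)/4c$, and observe that the latter is governed by the sign of $g(x)=c^2x^4+3cx^3+x^2+1$. You add two things the paper leaves implicit, both worthwhile. First, the paper justifies the crucial fact that $g>0$ on $[1,\infty)$ for every $c<-2$ only by pointing at Fig.~\ref{fig:theorem3}; your argument via $g(1)=(c+1)(c+2)>0$ together with the location of the roots $-2/c$ and $-1/(4c)$ of $g'(x)/x$ inside $(0,1)$ is an actual proof of that fact, and it is exactly what closes the equal-mass case at $c_J=-2$. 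Second, the paper asserts the existence of $c_E'>c_E$ for $\mu<1/2$ without argument; your compactness-plus-continuity argument at $c=c_E$ supplies it. One caveat, which you inherit from the paper rather than introduce: the positivity of the critical value $A_x(y_0)=(2cx^2+x-c)(y_0^2-1)(cy_0+1-2\mu)^2$ uses $y_0^2<1$ and $cy_0+1-2\mu\neq0$, i.e.\ it presupposes that the roots of the quadratic factor of $A_x'$ lie in the relevant $y$-range, and neither you nor the paper verifies this. If such a root lay outside $[-1,1]$ its critical value would be nonpositive and the implication ``all critical values positive $\Rightarrow A_x>0$ on $\R$'' would break down; the repair is to argue positivity only on $[-1,1]$, using that $A(x,\pm1)>0$ by the preceding lemma, that any \emph{interior} critical point automatically has $y_0^2<1$, and that the possible degenerate point $y_0=(1-2\mu)/(-c)$ (where the critical value vanishes) lies strictly above the upper boundary value of $y$ in the Earth component. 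That repair is available, but it should be stated explicitly; as written this step is incomplete in both your version and the paper's.
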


\;

In what follows, we may further assume that $c^2 x^4 + 3cx^3 + x^2 +1 <0$, see Figure \ref{fig:theorem3}. Since
\begin{eqnarray*}
A&=& \bigg( 2cy^2 + (1-2\mu)y -c \bigg)\bigg(  (cx^2 + 2x)  ( 2c x^2 +x -c) -  (x^2-1)(1+cx)^2   \bigg) \\
&& - \bigg( 2c x^2 +x -c \bigg)\bigg(  (cy^2 + 2(1-2\mu)y) ( 2cy^2 + (1-2\mu)y -c)    +(1-y^2)( 1-2\mu +cy)^2    \bigg)\\
&=& ( 2cy^2 + (1-2\mu)y -c ) (c^2 x^4 + 3cx^3 + x^2 +1) \\
&&- ( 2c x^2 +x -c )(c^2y^4 + 3c(1-2\mu)y^3 + (1-2\mu)^2 y^2 + (1-2\mu)^2 ),
\end{eqnarray*}
that $2c x^2 +x -c<0$, $2cy^2 + (1-2\mu)y -c >0$ and $c^2 x^4 + 3cx^3 + x^2 +1 <0$  implies that we may assume that $c^2y^4 + 3c(1-2\mu)y^3 + (1-2\mu)^2 y^2 + (1-2\mu)^2 >0$.

We now abbreviate by $A_y(x) = A(x,y)$  and differentiate it 
\begin{equation*}
A_y'(x)  = ( 1 + 4cx)\bigg( (2cy ^2 + (1-2\mu ) y -c)cx^2 + 2(2c y^2 +(1-2\mu)y -c ) x -(c^2y^4 + 3c(1-2\mu)y^3 + (1-2\mu)^2 y^2 + (1-2\mu)^2) \bigg). 
\end{equation*}
Assume that $$ g_y(x):= (2cy ^2 + (1-2\mu ) y -c)cx^2 + 2(2c y^2 +(1-2\mu)y -c ) x -(c^2y^4 + 3c(1-2\mu)y^3 + (1-2\mu)^2 y^2 + (1-2\mu)^2) $$ admits no real roots. Then $g_y(x) <0$ and since  $1+4cx<0$, we have $A_y'(x) >0$.
\begin{figure}[t]
 \centering
 \includegraphics[width=0.65\textwidth, clip]{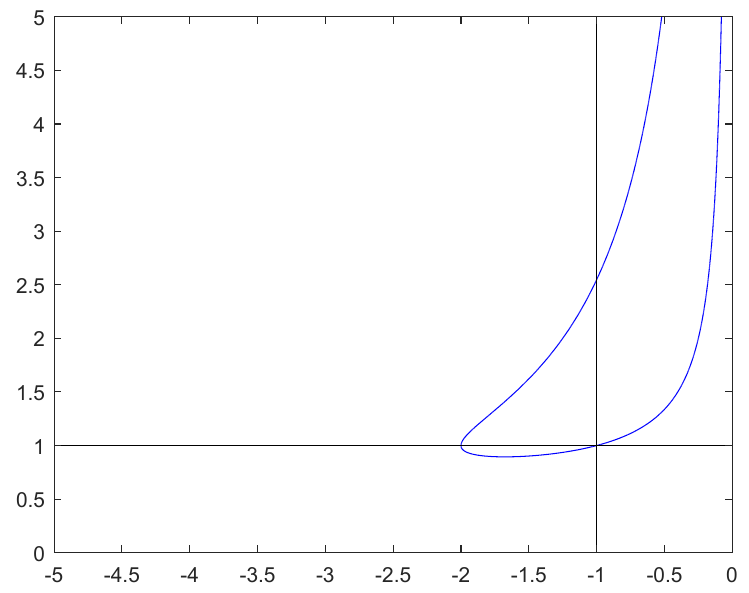}
  \caption{ The curve $c^2 x^4 +3cx^3 +x^2 +1=0$. The horizontal axis is the $x$-axis and the vertical one is the $c$-axis. In the region enclosed by the blue curve and the two lines $c=-1$ and $x=1$, we have $c^2 x^4 +3cx^3 +x^2 +1>0$. The leftmost point is $(x,c)=(1,-2)$.}   
 \label{fig:theorem3}
\end{figure}
Assume that $ g_y$ has a real root.  We  note that $g_y(0) <0$ and by the Vieta's formulas the sum of real roots equals 
\begin{equation*}
- \frac{ 2( 2cy^2 + (1-2\mu)y - c)}{ c( 2cy^2 + (1-2\mu)y - c)} = \frac{2}{-c} \in (0,2).
\end{equation*}
We claim that both the two roots are less than 1 in the Moon component. By the previous argument, it suffices to show that $ g_y(1) <0$. We abbreviate by
\begin{equation*}
h(y) := g_y(1)  = ( 2cy^2 + (1-2\mu)y - c)(c+2) - (c^2y^4 + 3c(1-2\mu)y^3 + (1-2\mu)^2 y^2 + (1-2\mu)^2).
\end{equation*}
and differentiate that
\begin{equation*}
h'(y) = (1-2\mu + 4cy) ( - cy^2 - 2(1-2\mu)y + c+2).
\end{equation*}
Note that $y_{\pm}:=( -(1-2\mu) \pm \sqrt{ c^2 + 2c+ (1-2\mu)^2 })/  c$, which are the boundary values for $y$ in the Earth and the Moon components, see (\ref{ellipticdomain}),  are   roots of $h'$. Using $-c y_{\pm}^2 -2(1-2\mu)y_{\pm}+c+2=0$ we compute that 
\begin{eqnarray*}
&&  c^2y_{\pm}^4 + 3c(1-2\mu)y_{\pm}^3 + (1-2\mu)^2 y_{\pm}^2 + (1-2\mu)^2\\
 &=&   c(1-2\mu)y_{\pm}^3 + c(c+2)y_{\pm}^2 + (1-2\mu)^2 y_{\pm}^2 + (1-2\mu)^2 \\
&=&  -2(1-2\mu)^2 y_{\pm}^2 + (1-2\mu)(c+2)y_{\pm}  + c(c+2)y_{\pm}^2 + (1-2\mu)^2 y_{\pm}^2 + (1-2\mu)^2 \\
&=&  - (1-2\mu)^2 y_{\pm}^2 + (1-2\mu)(c+2)y_{\pm}  + c(c+2)y_{\pm}^2   + (1-2\mu)^2 .
\end{eqnarray*}
which follows that
\begin{eqnarray*}
h(y_{\pm}) &=& c(c+2)y_{\pm}^2 - c(c+2) + (1-2\mu)^2 y_{\pm}^2 - (1-2\mu)^2 \\
&=& ( c^2 + 2c + (1-2\mu)^2 )(y_{\pm}^2 -1).  
\end{eqnarray*}
Since $a<y_{\pm}<b$ and $c<c_J  = -1 - 2\sqrt{\mu(1-\mu)}$ we conclude that $h(y_{\pm})<0$. For the remaining root $-(1-2\mu)/4c$ of $h'(y)$, we observe that
\begin{equation*}
\frac{  -(1-2\mu )}{ 4c} < y_- = \frac{ -(1-2\mu) - \sqrt{ c^2 + 2c+ (1-2\mu)^2 }}{ c} .
\end{equation*}
Thus, we conclude that $h(y)=g_y(1) <0$  in the Moon component, which proves the claim. 

On the other hand, we observe that
\begin{equation*}
 \frac{-(1-2\mu)+\sqrt{ c^2 + 2c +(1-2\mu)^2}}{c} < -\frac{1-2\mu}{4c} \;\;\; \Rightarrow  \;\;\; c < c_E'':=-1-\frac{ \sqrt{ -28\mu^2 + 28\mu + 9 }}{4} .
\end{equation*}
This shows that both the two roots of $g_y(x)$ are less than 1 in the Earth component if $c<c_E''$. 

We have shown that if either the satellite moves in the Earth component for $ c < c_E''$ or it moves in the Moon component, $ g_y(x) <0$ and hence $A_y'(x) >0$. As a result, $A_y$ is an increasing function and by
\begin{equation*}
A_y(1) = (c+1)g_x(1)>0   
\end{equation*}
we then conclude that $A_y(x) >0$. This gives rise to the following proposition which improves Proposition \ref{oriosgesd}.

\begin{Proposition}Assume that $ \mu< 1/2$. The regularized Moon component bounds a strictly convex domain. The same assertion holds true for the Earth component if $c < c_E''$ with $c_E''>c_E'$.
\end{Proposition}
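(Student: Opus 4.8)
The plan is to deduce the proposition by assembling the sign analysis of $A$ carried out above into a single statement that $A>0$ throughout the admissible domain (\ref{ellipticdomain}). Once this is in hand the conclusion is immediate: by the first Lemma the gradient of $Q$ never vanishes, so $Q^{-1}(0)$ is a smooth hypersurface, and on $Q^{-1}(0)$ the bracketed factor $bcd\,x^2+acd\,y^2+abd\,z^2+abc\,w^2$ of the curvature identity equals, up to a positive constant, the function $A$ (this is the reduction performed using $Q=0$). Hence $A>0$ is exactly the condition that the Gauss-Kronecker curvature be positive, which is what it means for the regularized component to bound a strictly convex domain.

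First I would split the domain by the value of $y$. On $y\in[-1,a]\cup[b,1]$ the inequality $A>0$ is already furnished by the second Lemma, so the entire difficulty is concentrated in the central strip $y\in(a,b)$, where $f(y)=2cy^2+(1-2\mu)y-c>0$ and where the critical point responsible for the loss of convexity near the heavier body lives. Inside this strip I would split again by the sign of $P(x):=c^2x^4+3cx^3+x^2+1$. When $P(x)\ge 0$, the quartic $A_x(y)$ has positive leading coefficient $-c^2(2cx^2+x-c)$, and evaluating it at its critical points — at the root $y_0$ of the quadratic factor, where $A_x(y_0)=(2cx^2+x-c)(y_0^2-1)(cy_0+1-2\mu)^2>0$, and at $y=-(1-2\mu)/4c$ — forces $A_x(y)>0$; this is precisely Proposition \ref{oriosgesd}. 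When $P(x)<0$ I would instead regard $A$ as a function $A_y(x)$ of $x$ and show it is strictly increasing: its derivative factors as $A_y'(x)=(1+4cx)\,g_y(x)$ with $1+4cx<0$, so monotonicity follows once $g_y(x)<0$, and the inner boundary value $A_y(1)>0$ then propagates to $A_y(x)>0$ for every admissible $x\ge 1$.

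The crux, and the step I expect to be the \textbf{main obstacle}, is the root-location argument for $g_y$, which is exactly where the Earth/Moon asymmetry and the threshold $c_E''$ enter. Since $g_y(0)<0$ and, by Vieta, the sum of the roots equals $2/(-c)\in(0,2)$, both roots lie below $x=1$ as soon as $g_y(1)=h(y)<0$, so the task reduces to checking $h(y)<0$ over the full $y$-range. The decisive computation is the evaluation of $h$ at the outer boundary values $y_\pm=(-(1-2\mu)\pm\sqrt{c^2+2c+(1-2\mu)^2})/c$, which collapses to the clean identity $h(y_\pm)=(c^2+2c+(1-2\mu)^2)(y_\pm^2-1)$; this is negative because $a<y_\pm<b$ and $c<c_J$. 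Combined with the location of the remaining critical point $-(1-2\mu)/4c$ of $h$ relative to the $y$-range, one gets $h<0$ on the Moon's range unconditionally — hence strict convexity of the Moon component for every $c<c_J$ — whereas on the Earth's range this forces the outer boundary $y_+$ to satisfy $y_+<-(1-2\mu)/4c$, and carrying out this comparison yields precisely $c<c_E''=-1-\sqrt{-28\mu^2+28\mu+9}/4$. I would finish by verifying the elementary inequality $c_E''>c_E'$, so that the statement genuinely improves Proposition \ref{oriosgesd}; the subtle point throughout is ensuring that the two cases $P\ge 0$ and $P<0$ dovetail to cover the strip $y\in(a,b)$ without gaps.
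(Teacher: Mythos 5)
Your proposal follows the paper's own argument essentially step for step: the same reduction of convexity to $A>0$, the same splitting first by $y\in[-1,a]\cup[b,1]$ versus $y\in(a,b)$ and then by the sign of $c^2x^4+3cx^3+x^2+1$, the same monotonicity argument via $A_y'(x)=(1+4cx)g_y(x)$, and the same root-location step reducing everything to $h(y_\pm)=(c^2+2c+(1-2\mu)^2)(y_\pm^2-1)<0$ together with the comparison $y_+<-(1-2\mu)/4c$ that produces $c_E''$. This matches the paper's proof, so no further comment is needed.
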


\begin{figure}[t]
 \centering
 \includegraphics[width=0.65\textwidth, clip]{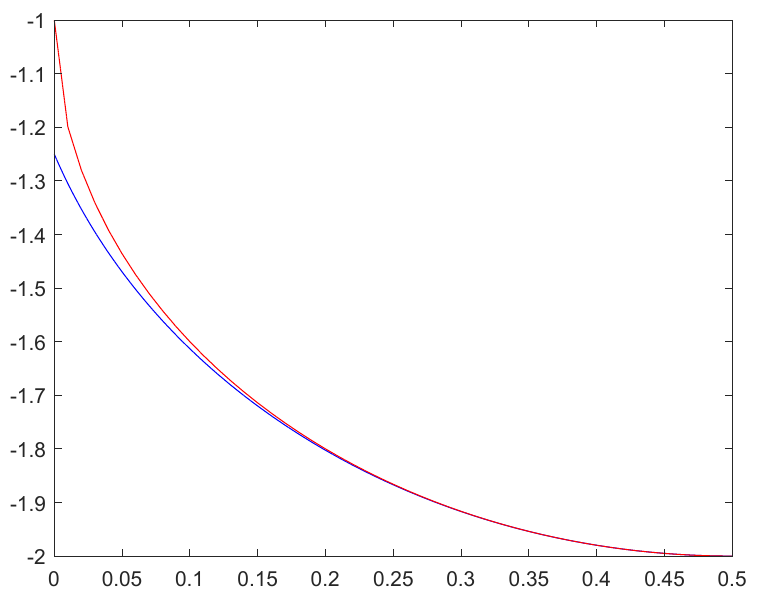}
  \caption{   The red curve is $c_J=c_J(\mu)$ and the blue one represents $c_0 = c_0(\mu).$  }
  \label{c0cmu}
\end{figure}

To complete the proof of the theorem, we consider the remaining case: the satellite is confined to a neighborhood of the Earth and $c_E'' \leq c <c_J$. In this case $h'(y)$ has three solutions
\begin{equation*}
- \frac{1-2\mu}{4c} \leq y_+ = \frac{ -(1-2\mu) + \sqrt{ c^2 + 2c +(1-2\mu)^2}}{c} < y_- = \frac{ -(1-2\mu) - \sqrt{ c^2 + 2c +(1-2\mu)^2}}{c}.
\end{equation*}
Recall that $h(y_{\pm}) <0$.  We observe that
\begin{equation*}
h\bigg( - \frac{1-2\mu}{4c} \bigg)= - \frac{1}{c^2} \eta(c),
\end{equation*} 
where $\eta(c) = c^4 + 2c^3 + (9/8)(1-2\mu)^2 c^2 +( (1-2\mu)^2/4)c + (5/256)(1-2\mu)^4$. We differentiate
\begin{eqnarray*}
\eta'(c) &=& 4c^3 + 6c^2 +\frac{9}{4} (1-2\mu)^2 c- \frac{(1-2\mu)^2}{4} \\
\eta''(c) &=& 12c^2 +12c + \frac{9}{4}(1-2\mu)^2.
\end{eqnarray*}
We observe that the discriminant of $\eta''$ is positive
\begin{equation*}
D/4(\eta'') = -108 \mu^2 + 108 \mu +9 >0. 
\end{equation*}
Moreover,  the sum of the real roots is negative and the product is positive. By means of $\eta''(0)>0$, we conclude that the two real roots are negative. Since
\begin{equation*}
\eta''(c_J) = ( -39 \mu^2 + 39\mu +\frac{9}{4} ) + 24\sqrt{\mu(1-\mu)}>0,
\end{equation*}
we conclude that $\eta'' >0$ for $c<c_J$, which implies that $\eta'$ is increasing for $c<c_J$. We then observe that
\begin{equation*}
\eta'(c_J)=(14\mu^2 - 14\mu - \frac{9}{2})\sqrt{\mu(1-\mu)} - 16\mu(1-\mu) <0
\end{equation*}
and hence $\eta$ is decreasing for $c<c_J$. We finally compute that
\begin{equation*}
\eta( c_J) = - \frac{27}{256}(1-2\mu)^4 <0
\end{equation*}
and
\begin{equation*}
\eta(c_E'') = \frac{9}{32}(1-2\mu)^2 ( \sqrt{ -28\mu^2 +28\mu+9} -4\mu^2 +4\mu +3)>0.
\end{equation*}
We conclude that there exists $c_0 \in ( c_E'', c_J)$ such that 
\begin{equation*}
\begin{cases} \displaystyle h\bigg( \frac{-(1-2\mu)}{4c} \bigg) <0 \text{ if } c<c_0 \\  \\ \displaystyle h\bigg( \frac{-(1-2\mu)}{4c} \bigg) \geq 0 \text{ if } c \geq c_0 . \end{cases}
\end{equation*}
By the previous argument, this implies that the Earth component bounds a strictly convex domain for $c<c_0$ and it fails to be strictly convex for $c \geq c_0$, see Figure \ref{c0cmu}.

\;

\section{Proof of Theorem \ref{theorem2}}\label{sec5}

To prove  Theorem \ref{theorem2},  we again use Theorem \ref{theoremsalomao}.

Consider the boundary $\partial \pi ( K_c^{-1}(0))$ along which $V$ equals zero, where $K_c$ is given in (\ref{eq:Levi}) and 
\begin{equation}\label{Levipotential}
V(x,y) :=V_c(x,y)= - c(x^2 +y^2) - \frac{\mu (x^2 +y^2)}{ \sqrt{ 4x^4 + 8x^2y^2 - 4x^2 + 4y^4 + 4y^2 +1   }} - \frac{1-\mu}{2}.
\end{equation}
Observe that the potential function $V$ also depends on the energy level.  Along the boundary, the left-hand side of the inequality (\ref{eq:convex})  becomes
\begin{equation}\label{theorem2proof}
F:=V_{xx}V_y^2 + V_{yy}V_x^2 - 2 V_x V_y V_{xy} . 
\end{equation}
In the following we show that for $c=c_J$ and for $\mu < 16/17$ the function $F$ fails to be positive near a critical point of $V_{c_J}$. By continuity this proves Theorem \ref{theorem2}.

\;
 
We first compute   critical points of $V$. 

\begin{Lemma}\label{lemma51} There exists a precisely three critical point $(0,0)$ and $ (\pm x_0 = \pm \sqrt{ (1/2)(1-\sqrt{\mu/-c} )}, 0)$ of $V$.    The two critical points $(\pm x_0, 0)$ lie on the curve $V_{c_J}=0$.
\end{Lemma}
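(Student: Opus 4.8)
The plan is to compute $\nabla V$ explicitly, use the evenness of $V$ in each variable to factor $x$ and $y$ out of the two partials, and then run a short case analysis; the second assertion is a direct substitution that reduces to a closed-form radical identity. First I would simplify the radicand: with $r^2 = x^2 + y^2$ one has
\[
4x^4 + 8x^2y^2 - 4x^2 + 4y^4 + 4y^2 + 1 = 4r^4 - 4(x^2 - y^2) + 1 =: R^2,
\]
so that $V = -cr^2 - \mu r^2/R - (1-\mu)/2$. Differentiating $R^2$ gives $RR_x = 4x(2r^2 - 1)$ and $RR_y = 4y(2r^2 + 1)$, and after collecting terms the gradient factors as
\[
V_x = -2x\Big(c + \frac{\mu(1 - 2x^2 + 6y^2)}{R^3}\Big), \qquad V_y = -2y\Big(c + \frac{\mu(1 + 2y^2 - 6x^2)}{R^3}\Big).
\]

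Next I would solve $V_x = V_y = 0$ by cases. If $x = y = 0$ we get the origin (where $R = 1$), which is a critical point. If the two bracketed factors vanish simultaneously, subtracting them yields $4\mu(x^2+y^2)/R^3 = 0$, forcing $x = y = 0$; thus there are no critical points off the coordinate axes. On the axis $x = 0$ one has $R = 2y^2 + 1$, and vanishing of the second bracket reduces to $(2y^2+1)^2 = \mu/(-c)$, which is impossible because the left side is $\ge 1$ while $\mu/(-c) < 1$ (as $c < c_J \le -1 < -\mu$); so the origin is the only critical point on the $x$-axis. On the axis $y = 0$ one has $R = |2x^2 - 1|$, and vanishing of the first bracket becomes $(2x^2 - 1)^2 = \mu/(-c)$ together with the sign condition $2x^2 - 1 < 0$ forced by $c < 0$. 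This selects $2x^2 - 1 = -\sqrt{\mu/(-c)}$, i.e.\ $x = \pm x_0$ with $x_0 = \sqrt{\tfrac12\big(1 - \sqrt{\mu/(-c)}\big)}$; the inequality $-c > \mu$ guarantees $0 < x_0 < 1/\sqrt2$, so these are genuine critical points lying strictly inside the collision locus $x^2 = 1/2$. This proves that $(0,0)$ and $(\pm x_0,0)$ are exactly the three critical points.

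For the second assertion I set $c = c_J$ and evaluate at $(x_0,0)$, where now $R = |2x_0^2 - 1| = \sqrt{\mu/(-c_J)}$. Writing $t := \sqrt{\mu/(-c_J)}$, so that $\mu = -c_J t^2$ and $x_0^2 = (1-t)/2$, the three terms of $V_{c_J}(x_0,0)$ collapse to
\[
V_{c_J}(x_0,0) = -\tfrac12 c_J(1-t)^2 - \tfrac12(1-\mu).
\]
It then remains to verify the single identity $c_J(1-t)^2 = -(1-\mu)$. Expanding $(1-t)^2$ and using $c_J t^2 = -\mu$ and $c_J t = -\sqrt{-\mu c_J}$, this reduces to $\sqrt{\mu + 2\mu\sqrt{\mu(1-\mu)}} = \mu + \sqrt{\mu(1-\mu)}$, which holds since squaring the right-hand side reproduces the radicand exactly. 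Hence $V_{c_J}(\pm x_0, 0) = 0$.

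The differentiation and the case analysis are mechanical; the only genuinely delicate points are keeping track of the sign of $2x^2 - 1$ in the $y = 0$ case (equivalently, staying on the branch of the square root that places the critical point on the near side of the Moon's collision locus) and confirming the closed-form radical identity $c_J(1-t)^2 = -(1-\mu)$ that pins $(\pm x_0, 0)$ onto $\{V_{c_J} = 0\}$.
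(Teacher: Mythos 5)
Your proof is correct and follows essentially the same route as the paper: factor $x$ and $y$ out of $V_x$ and $V_y$, rule out off-axis critical points by subtracting the two bracketed factors, analyze each axis separately (keeping track of the sign of $2x^2-1$), and verify the last claim by direct substitution. The only differences are cosmetic simplifications — you reduce the axis equations to $(2y^2+1)^2=\mu/(-c)$ and $(2x^2-1)^2=\mu/(-c)$ and use $\mu/(-c)<1$ where the paper invokes discriminants and Vieta's formulas, and you check the radical identity at $c=c_J$ directly where the paper solves for the energies at which $V(\pm x_0,0)=0$.
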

\begin{proof} That $(0,0)$ is a critical point is straightforward from 
\begin{eqnarray*}
&&V_x = - \frac{2x A(x,y)}{ \sqrt{ 4x^4 - 8x^2 y^2 - 4x^2 + 4y^4 + 4y^2 +1}^3} ,\\
&&V_y = - \frac{ 2yB(x,y)}{\sqrt{ 4x^4 - 8x^2 y^2 - 4x^2 + 4y^4 + 4y^2 +1}^3} ,
\end{eqnarray*}
where
\begin{eqnarray*}
&&A(x,y) = c \sqrt{ 4x^4 - 8x^2 y^2 - 4x^2 + 4y^4 + 4y^2 +1}^3 - \mu ( 2x^2 - 6y^2 -1),\\
&&B(x,y) = c \sqrt{ 4x^4 - 8x^2 y^2 - 4x^2 + 4y^4 + 4y^2 +1}^3 - \mu ( 6x^2 - 2y^2 -1).
\end{eqnarray*}

To find another critical points, we first suppose that $x\neq 0$ and $y \neq 0$. For a point $(x,y)$ be a critical point of $V$, it then must satisfy $A(x,y)=B(x,y)=0$. Plugging  $A=0$ into $B=0$ gives rise to $-4\mu(x^2 + y^2 ) =0$, which implies that $ x = y = 0$. This contradicts the assumption.

We next suppose that $ x =0$ and $y \neq 0$. We observe that
\begin{eqnarray*}
V_y (0, y) = - \frac{2\mu}{ (1+2y^2 )^2} ( 4cy^4 + 4cy^2 + c + \mu).
\end{eqnarray*}
To check whether $V_y(0,y)=0$ admits a real root, we regard  $4cy^4 + 4cy^2 + c + \mu$ as a polynomial of $y^2$ and  see that the discriminant is positive:
\begin{equation*}
D/4 = ( 2c)^2 - 4c (c+\mu) = -4\mu c >0.
\end{equation*}
It then has two real roots and by the Vieta's formulas they are both negative. This implies that $V_y (0,y)=0$ admits no real roots. 

Finally, we assume that $ x\neq 0$ and $ y = 0$. As in the previous case, we observe 
\begin{equation*}
V_x (x,0) = \begin{cases} \displaystyle - \frac{2x}{ (2x^2 -1)^2} ( 4cx^4 - 4cx^2 + c + \mu) & \text{ if } 2x^2 -1 <0 \\  \displaystyle - \frac{2x}{ (2x^2 +1)^2} ( 4cx^4 - 4cx^2 + c - \mu) & \text{ if } 2x^2 -1 >0 \end{cases}
\end{equation*}
We then compute the discriminants of $  4cx^4 - 4cx^2 + c  \pm \mu$, which is regarded as a polynomial of $x^2$: 
\begin{equation*}
D/4= \mp 4c\mu.
\end{equation*}
It follows immediately that there are no critical points in the latter case. For the former case, we compute that
\begin{equation*}
x^2 = \frac{1}{2} \pm \frac{1}{2} \sqrt{ \frac{\mu}{-c}}.
\end{equation*}
Since $2x^2-1<0$, we conclude that $V_x(x,0)=0$ has precisely two real solutions
\begin{equation*}
\pm x_0 := \pm \sqrt{ \frac{1}{2} - \frac{1}{2} \sqrt{ \frac{\mu}{-c}}}.
\end{equation*}

To prove the last assertion, we need to show that $V_{c_J} ( \pm x_0, 0)=0$. This follows from 
\begin{equation*}
V( \pm x_0 , 0 )= \frac{  ( c + \sqrt{ - \mu c } + 1-\mu  ) \sqrt{ -\mu c} + \mu ( c + \sqrt{ - \mu c})}{ 2 \sqrt{ - \mu c}}
\end{equation*}
and
\begin{equation*}
( c + \sqrt{ - \mu c } + 1-\mu  ) \sqrt{ -\mu c} + \mu ( c + \sqrt{ - \mu c}) =0 \;\; \; \Leftrightarrow\;\; \; c = -1 \pm 2 \sqrt{  \mu (1-\mu) }.
\end{equation*}
This completes the proof of the lemma.
\end{proof}

By abuse of the notation we abbreviate by $V= V_{c_J}$. A direct consequence of the previous lemma  is that the curves $V=0$ and $F=0$ intersect at $(\pm x_0, 0)$, where $F$ is  defined as  in (\ref{theorem2proof}). In what follows, we may concentrate on $(x_0, 0)$.

\begin{Lemma}\label{lemma52} Both  curves $V=0$ and $F=0$ are tangent to the lines $y = \pm \sqrt{2}(x- x_0)$ at $( x_0, 0)$.
\end{Lemma}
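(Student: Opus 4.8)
The plan is to recognise $(x_0,0)$ as a \emph{singular} point of both curves $\{V=0\}$ and $\{F=0\}$, and to read off the tangent directions from the leading (degree-two) part of the Taylor expansion there. By Lemma~\ref{lemma51} the point $(x_0,0)$ is a critical point of $V=V_{c_J}$ lying on $\{V=0\}$, so both $V$ and $\nabla V$ vanish at it. For $F=V_{xx}V_y^2+V_{yy}V_x^2-2V_xV_yV_{xy}$, every monomial carries a factor $V_x$ or $V_y$, and after one differentiation each resulting term (by the product rule) still retains such a factor; hence $F$ and $\nabla F$ also vanish at $(x_0,0)$. Consequently each curve has a singularity there, and its tangent cone is the zero locus of its Hessian quadratic form.

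First I would exploit the symmetry $V(x,-y)=V(x,y)$: the potential depends on $y$ only through $y^2$, since both $x^2+y^2$ and the radical in \eqref{Levipotential} are even in $y$. Thus $V_x$ is even and $V_y$ is odd in $y$, giving $V_{xy}(x_0,0)=0$. Writing $s=x-x_0$, $t=y$ and $a=V_{xx}(x_0,0)$, $b=V_{yy}(x_0,0)$, the degree-two part of $V$ is $\tfrac12(as^2+bt^2)$. A short expansion of $F$ about $(x_0,0)$, using that $V_{xx}=a+\cdots$, $V_{yy}=b+\cdots$, $V_x=as+\cdots$, $V_y=bt+\cdots$ while $V_{xy}$ vanishes to first order (so the term $-2V_xV_yV_{xy}$ is cubic and irrelevant), shows that the degree-two part of $F$ equals $ab\,(as^2+bt^2)$. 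Hence, provided $ab\neq0$, the two quadratic forms are proportional and the two curves share the same tangent cone $as^2+bt^2=0$, i.e. the lines $t=\pm\sqrt{-a/b}\,s$. This is the conceptual heart of the lemma: the tangency of $\{F=0\}$ piggybacks on that of $\{V=0\}$ as soon as $V_{xy}(x_0,0)=0$.

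It then remains to compute $a$ and $b$ and to check $-a/b=2$. Along $y=0$ near $x_0$ the radical collapses to $1-2x^2$ (since $2x_0^2-1<0$), so $V(x,0)=-cx^2-\mu x^2/(1-2x^2)-(1-\mu)/2$, and two $x$-derivatives give $V_{xx}(x,0)=-2c-2\mu(1+6x^2)/(1-2x^2)^3$. For $b$ I would differentiate $V$ twice in $y$ and then set $y=0$, obtaining $V_{yy}(x,0)=-2c-\mu\big(2/(1-2x^2)-4x^2(2x^2+1)/(1-2x^2)^3\big)$. Introducing $\sigma:=\sqrt{\mu/(-c)}$, so that $x_0^2=\tfrac12(1-\sigma)$, $1-2x_0^2=\sigma$ and $\mu=-c\sigma^2$, these reduce to $a=8c(1-\sigma)/\sigma$ and $b=-4c(1-\sigma)/\sigma$. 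Therefore $-a/b=2$, the tangent lines are $y=\pm\sqrt2\,(x-x_0)$, and since $0<\sigma<1$ and $c<0$ we have $a<0<b$, so $ab\neq0$ and the node is a genuine crossing of two distinct real lines.

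I expect the only real labour to be the second $y$-derivative of $V$ at $y=0$, where the subtlety is that $W_y$ vanishes at $y=0$ while $W_{yy}$ does not, so the careless expectation that the whole $y$-dependence drops out is wrong. Once that computation is handled and $V_{xy}(x_0,0)=0$ is recorded, the remaining steps are bookkeeping with $\sigma$, and the proportionality of the two Hessian forms delivers the common tangent cone with essentially no extra work.
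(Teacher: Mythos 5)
Your proof is correct and takes essentially the same route as the paper's: both reduce the tangency of each curve at the singular point $(x_0,0)$ to the Hessian quadratic forms there, using $V_x(x_0,0)=V_y(x_0,0)=V_{xy}(x_0,0)=0$, and both rest on the same values $V_{xx}(x_0,0)=-8c_J\bigl(1-\sqrt{-c_J/\mu}\bigr)$ and $V_{yy}(x_0,0)=4c_J\bigl(1-\sqrt{-c_J/\mu}\bigr)$ yielding $(dy/dx)^2=-V_{xx}/V_{yy}=2$. Your observation that the degree-two part of $F$ equals $V_{xx}V_{yy}$ times that of $V$ is a slightly cleaner packaging of the paper's computation $F_{xx}+2F_{yy}=2V_{xx}V_{yy}(V_{xx}+2V_{yy})=0$, but the content is identical.
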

\begin{proof} We differentiate $V(x,y)= 0$ twice with respect to $x$ and obtain
\begin{equation*}
V_{xx} + 2V{xy} \frac{dy}{dx} + V_{yy} \bigg( \frac{dy}{dx} \bigg)^2 + V_{y} \frac{ d^2 y}{dx^2} = 0.
\end{equation*}
Since $V_y(x_0, 0) = V_{xy}(x_0,0)=0$ we obtain $V_{xx} (x_0, 0) + V_{yy} (x_0, 0) ( dy/dx)^2 =0$ (cf. Lemma \ref{lemma31}). We compute that
\begin{equation*}
V_{xx}(x_0, 0) = -8 c_J \bigg( 1 - \sqrt{ \frac{ -c_J }{\mu} } \bigg)
\end{equation*}
and
\begin{equation*}
 V_{yy}(x_0, 0) =  4 c_J \bigg( 1 - \sqrt{ \frac{ -c_J }{\mu} } \bigg),
\end{equation*}
which follows that
\begin{equation*}
\bigg( \left. \begin{matrix} \displaystyle   \frac{dy}{dx} \end{matrix} \right|_{(x,y)=(x_0, 0)} \bigg) = 2.
\end{equation*}
Similarly, by $V_x (x_0, 0) = V_y (x_0, 0)=V_{xy} (x_0, 0) =0$ we obtain that
\begin{eqnarray*}
F_{xx}(x_0, 0) + 2 F_{yy}(x_0, 0 ) &=& 2V_{xx}(x_0, 0)^2 V_{yy}(x_0, 0) + 4 V_{xx}(x_0, 0) V_{yy}(x_0, 0)^2 \\
&=& 2V_{xx}(x_0, 0)V_{yy}(x_0, 0) ( V_{xx}(x_0, 0) + 2V_{yy}(x_0, 0) ) =0.
\end{eqnarray*}
This finishes the proof of the lemma.
\end{proof}

The next Lemma shows that $F>0$ at intersection points of the curve $V=0$ and the $x$-axis.

\begin{Lemma}\label{lemma53}   $F(x,0) \geq 0 $   and the equality holds if and only if $x = 0, \pm x_0$. 
\end{Lemma}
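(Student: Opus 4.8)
The plan is to exploit the reflection symmetry of the potential. Since the radicand in \eqref{Levipotential} depends on $y$ only through $y^2$, the function $V$ is even in $y$; hence $V_y$ is odd and $V_x$ is even in $y$, so that $V_y(x,0)=0$ and $V_{xy}(x,0)=\partial_y V_x\big|_{y=0}=0$. Substituting these two vanishing quantities into the definition \eqref{theorem2proof} collapses $F$ along the $x$-axis to a single term,
\begin{equation*}
F(x,0)=V_{yy}(x,0)\,V_x(x,0)^2 .
\end{equation*}
This is the whole logical skeleton: if I can show $V_{yy}(x,0)>0$ for every admissible $x$ (i.e. $x^2\neq 1/2$), then $F(x,0)\ge 0$, and $F(x,0)=0$ forces $V_x(x,0)=0$, which by Lemma \ref{lemma51} happens exactly at $x\in\{0,\pm x_0\}$; conversely those three points are critical for $V_x(\cdot,0)$, so $F$ vanishes there. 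Thus the only analytic content left is the positivity of $V_{yy}(x,0)$.

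To extract that, I would first evaluate $V_{yy}$ on the axis. Writing the radicand as $W$, one has $W(x,0)=(2x^2-1)^2$ and $W_y(x,0)=0$; differentiating $V_y=-2yB/W^{3/2}$ from Lemma \ref{lemma51} and setting $y=0$ then yields
\begin{equation*}
V_{yy}(x,0)=-2c_J-\frac{2\mu\,(1-6x^2)}{|2x^2-1|^{3}} .
\end{equation*}
As a consistency check, evaluating this at $x_0$ reproduces $V_{yy}(x_0,0)=4c_J\big(1-\sqrt{-c_J/\mu}\big)$ from Lemma \ref{lemma52}.

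I would then prove $V_{yy}(x,0)>0$ by splitting on the sign of $1-6x^2$. For $x^2\ge 1/6$ the subtracted term is nonpositive while $-2c_J>0$, so positivity is immediate. The only delicate range is $x^2<1/6$, where $|2x^2-1|=1-2x^2$ and positivity of $V_{yy}(x,0)$ is equivalent to $\phi(u)>0$ for $u=x^2$, with $\phi(u):=-c_J(1-2u)^3-\mu(1-6u)$. Here I use $\phi'(u)=6\big(c_J(1-2u)^2+\mu\big)$, whose unique zero on $[0,1/2)$ is the minimizer $u=x_0^2=\tfrac12\big(1-\sqrt{\mu/(-c_J)}\,\big)$; a short computation gives the minimal value $\phi(x_0^2)=2\mu\big(1-\sqrt{\mu/(-c_J)}\,\big)$. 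Since $-c_J=1+2\sqrt{\mu(1-\mu)}>1>\mu$, the radical $\sqrt{\mu/(-c_J)}$ lies in $(0,1)$, so $\phi(x_0^2)>0$ and hence $\phi>0$ on all of $[0,1/2)$, covering the range $x^2<1/6$.

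I expect no serious obstacle: once the factorization $F(x,0)=V_{yy}(x,0)V_x(x,0)^2$ is in hand the problem becomes one-dimensional. The single point requiring care is confirming that the minimizer of the cubic $\phi$ lands exactly at $x_0^2$ and that its minimal value is positive — this is precisely where the hypothesis $c=c_J$ (concretely $-c_J>\mu$) is used, and it dovetails with the sign of $V_{yy}(x_0,0)$ already recorded in Lemma \ref{lemma52}.
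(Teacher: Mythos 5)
Your proposal is correct and follows essentially the same route as the paper: reduce $F(x,0)$ to $V_x(x,0)^2V_{yy}(x,0)$ using the evenness in $y$, prove $V_{yy}(x,0)>0$, and identify the zeros of $V_x(\cdot,0)$ via Lemma \ref{lemma51}. The only (cosmetic) difference is in verifying $V_{yy}(x,0)>0$: the paper bounds $(6x^2-1)/|2x^2-1|^3$ below by its minimum $-1$ at $x=0$ and uses $-c_J>\mu$, whereas you minimize the equivalent cubic $\phi(u)=-c_J(1-2u)^3-\mu(1-6u)$ at $u=x_0^2$; both computations are sound.
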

\begin{proof}  Plugging $y=0$ into the equation of $F$ gives rise to
\begin{equation*}
F(x, 0) = V_{x}(x, 0)^2 V_{yy}(x, 0).
\end{equation*} 
It suffices to show that $V_{yy}(x,0) >0$ for $x \neq 0, \pm x_0$. Since 
\begin{equation*}
V_{yy}(x,0) = -2c_J + \frac{ 2\mu ( 24x^6 - 28x^4 +10x^2-1)}{\sqrt{ 4x^4 - 4x^2+1}^5} =  -2c_J + 2\mu \frac{ 6x^2 -1}{|2x^2-1|^3}
 \end{equation*}
and  the function $(6x^2-1)/|2x^2-1|^3$ attains the minimum $-1$ at $x=0$, this finishes the proof of the lemma.
\end{proof}

We define the two functions $\widetilde{V}(x) :=V(x,  \sqrt{2} ( x - x_0) ) $ and $ \widetilde{F}(x) := F(x,   \sqrt{2} (x-x_0) ).$ The next step is to examine their derivatives.

\begin{Lemma}   $\widetilde{V}(x_0)=\widetilde{V}'(x_0 ) = \widetilde{V}''(x_0) = 0 $ for all $\mu$ and $\widetilde{V}'''(x_0) >0$ for $\mu < 16/17$. 
\end{Lemma}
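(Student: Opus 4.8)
The plan is to push everything down to the one-variable restrictions of $V=V_{c_J}$ along the $x$-axis, using crucially that $V$ depends on $y$ only through $y^2$ and is therefore even in $y$. Writing $m(x)=\sqrt2\,(x-x_0)$, so that $\widetilde V(x)=V(x,m(x))$ with $m'=\sqrt2$ and $m''=0$, the chain rule gives
\begin{align*}
\widetilde V'(x) &= V_x+\sqrt2\,V_y,\\
\widetilde V''(x) &= V_{xx}+2\sqrt2\,V_{xy}+2V_{yy},\\
\widetilde V'''(x) &= V_{xxx}+3\sqrt2\,V_{xxy}+6V_{xyy}+2\sqrt2\,V_{yyy},
\end{align*}
all partials evaluated at $(x,m(x))$; at $x=x_0$ the second slot is $0$.

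First I would clear the three vanishing statements. By Lemma \ref{lemma51} the point $(x_0,0)$ lies on $V=0$, so $\widetilde V(x_0)=V(x_0,0)=0$, and since it is a critical point, $V_x(x_0,0)=V_y(x_0,0)=0$, whence $\widetilde V'(x_0)=0$. Because $V$ is even in $y$, every derivative taken an odd number of times in $y$ vanishes on the $x$-axis; in particular $V_{xy}(x_0,0)=0$, so $\widetilde V''(x_0)=V_{xx}(x_0,0)+2V_{yy}(x_0,0)$. This is exactly the quadratic form of Lemma \ref{lemma52} evaluated in the tangent direction $(1,\sqrt2)$, and inserting $V_{xx}(x_0,0)=-8c_J(1-\sqrt{-c_J/\mu})$ and $V_{yy}(x_0,0)=4c_J(1-\sqrt{-c_J/\mu})$ makes it vanish identically in $\mu$.

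The substance is the third derivative. Evenness in $y$ kills the two odd terms, $V_{xxy}(x_0,0)=V_{yyy}(x_0,0)=0$, leaving
\[
\widetilde V'''(x_0)=V_{xxx}(x_0,0)+6\,V_{xyy}(x_0,0).
\]
Both surviving pieces are read off from one-variable restrictions: $V_{xxx}(x,0)=\tfrac{d^3}{dx^3}V(x,0)$ and $V_{xyy}(x,0)=\tfrac{d}{dx}V_{yy}(x,0)$, with $V_{yy}(x,0)$ already supplied by Lemma \ref{lemma53}. For $x$ near $x_0<1/\sqrt2$ the radical on the axis simplifies to $\sqrt{(2x^2-1)^2}=1-2x^2$, so near $x_0$
\[
V(x,0)=-c_Jx^2-\frac{\mu x^2}{1-2x^2}-\frac{1-\mu}{2},\qquad
V_{yy}(x,0)=-2c_J+2\mu\frac{6x^2-1}{(1-2x^2)^3}
\]
are explicit rational functions. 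Differentiating and abbreviating $s:=1-2x_0^2=\sqrt{\mu/(-c_J)}$ (from Lemma \ref{lemma51}), I expect the two contributions to collapse to the clean factorized form
\[
\widetilde V'''(x_0)=48\mu\,x_0\,s^{-4}\,(4-5s),
\]
so that, as $\mu,x_0,s>0$, positivity is equivalent to $s<4/5$.

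Finally I would convert $s<4/5$ into $\mu<16/17$. Since $-c_J=1+2\sqrt{\mu(1-\mu)}$, the inequality $s^2<16/25$ reads $25\mu-16<32\sqrt{\mu(1-\mu)}$; it holds automatically for $\mu<16/25$, and for $\mu\ge 16/25$ squaring gives $1649\mu^2-1824\mu+256<0$, whose roots are $16/97$ and $16/17$. Hence $s<4/5$ precisely when $\mu<16/17$, which is the asserted bound. The main obstacle is the bookkeeping in the third step, namely carrying out the third-derivative computation of the radical term and verifying that the numerator really does reduce to the single factor $4-5s$; once that reduction is in hand the quadratic factorization of the threshold is routine, the roots $16/97$ and $16/17$ being easy to spot.
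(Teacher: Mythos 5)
Your proposal is correct and follows essentially the same route as the paper: expand $\widetilde V'''(x_0)=V_{xxx}(x_0,0)+6V_{xyy}(x_0,0)$ after the odd-in-$y$ terms drop out, arrive at the explicit value $48\mu x_0(10x_0^2-1)/(1-2x_0^2)^4$, and convert $10x_0^2-1>0$ into $\mu<16/17$ by the same case split at $\mu=16/25$ and the factorization $(97\mu-16)(17\mu-16)<0$. The only (harmless) differences are that you justify the vanishing of the odd $y$-derivatives by the evenness of $V$ in $y$ rather than by the explicit formulas, and you evaluate $V_{xxx}$ and $V_{xyy}$ on the axis via one-variable restrictions of $V(x,0)$ and $V_{yy}(x,0)$ instead of the full partial derivatives computed in the paper's appendix.
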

\begin{proof} In view of  $V_{xy}(x_0,0)=V_{xxy}(x_0,0)=V_{yyy}(x_0,0) =0$, we have
\begin{eqnarray*}
\widetilde{V}''(x_0 )&=& V_{xx}(x_0, 0) + 2 V_{yy}(x_0, 0) = 0\\
\widetilde{V}'''(x_0 ) &=& V_{xxx}(x_0, 0) + 6V_{xyy}(x_0, 0) = 48 \mu \frac{ x_0 ( 10x_0^2 -1)}{ ( 2x_0 ^2 -1)^4} .
\end{eqnarray*} 
We now observe that
\begin{equation*}
 0 < 10 x_0^2 -1 = 4 - 5 \sqrt{ \frac{\mu}{-c_J}}  \;  \Leftrightarrow  \;    \mu   < \frac{ 16}{25}( 1 + 2 \sqrt{\mu(1-\mu)} ) .
\end{equation*}
If $\mu \leq 16/25$,  we are done. Assume that $\mu > 16/25$ and then 
\begin{equation*}
   \mu   < \frac{ 16}{25}( 1 + 2 \sqrt{\mu(1-\mu)} ) \; \Leftrightarrow \; (97\mu - 16)( 17\mu-16) <0 \; \Leftrightarrow \;  \mu < 16/17.
\end{equation*}
This finishes the proof of the lemma.
\end{proof}

\begin{figure}[t]
 \centering
 \includegraphics[width=0.6\textwidth, clip]{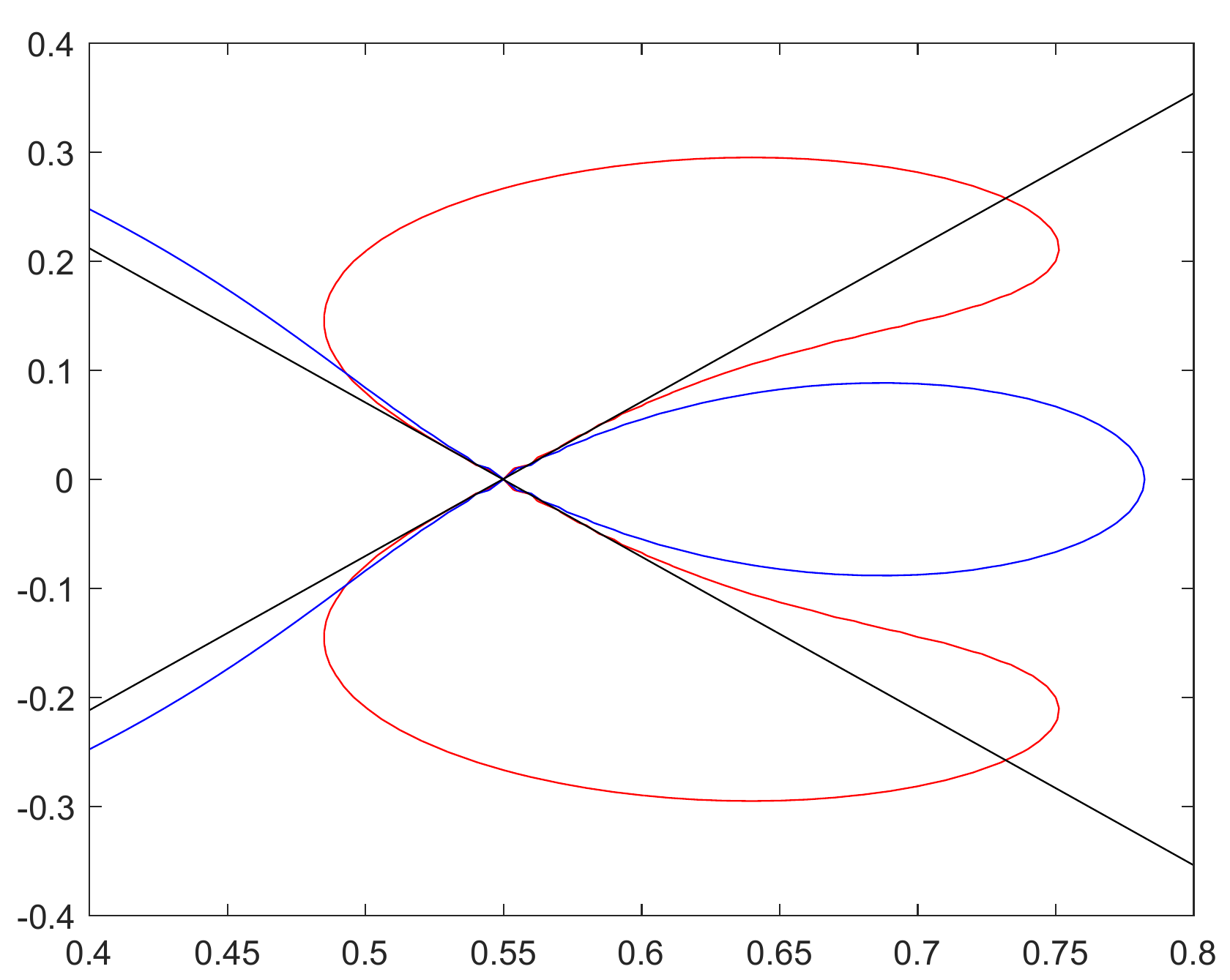}
  \caption{ Illustration for $\mu = 0.3$. The  blue curve is $V_{c_J} =0$ and the red one represents $F  =0$. They intersect at $(x_0,0)$ and are tangent to the black lines: $y =  \pm \sqrt{2} ( x-x_0).$}   
 \label{fig:theorem2}
\end{figure}

\begin{Lemma} $\widetilde{F}'(x_0 ) = \widetilde{F}''(x_0) = \widetilde{F}'''(x_0 ) =0.$
\end{Lemma}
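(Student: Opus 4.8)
The plan is to Taylor-expand $F$ about the point $(x_0,0)$ and to show that, after restriction to the line $y=\sqrt2\,(x-x_0)$, the homogeneous parts of $F$ of degrees $2$ and $3$ vanish identically. This forces $\widetilde F$ to vanish to order at least four at $x_0$, which is even more than the statement requires.

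First I would set $u=x-x_0$ and expand $V=V_{c_J}$ to third order around $(x_0,0)$. The expansion is greatly simplified by two inputs already at hand: the relations $V_x(x_0,0)=V_y(x_0,0)=V_{xy}(x_0,0)=0$ from Lemma~\ref{lemma51} and the proof of Lemma~\ref{lemma52}, and the evenness of $V$ in the variable $y$, which is manifest from~(\ref{Levipotential}) and forces every derivative of $V$ involving an odd number of $y$-differentiations to vanish on the $x$-axis; in particular $V_{xxy}(x_0,0)=V_{yyy}(x_0,0)=0$. Abbreviating $a=V_{xx}(x_0,0)$, $b=V_{yy}(x_0,0)$, $c_3=V_{xxx}(x_0,0)$ and $d_3=V_{xyy}(x_0,0)$, the only surviving low-order terms give
\begin{equation*}
V_x = a u+\tfrac12 c_3 u^2+\tfrac12 d_3 y^2+O(3),\qquad V_y = b y+d_3 u y+O(3),
\end{equation*}
together with $V_{xx}=a+c_3 u+O(2)$, $V_{yy}=b+d_3 u+O(2)$ and $V_{xy}=d_3 y+O(2)$, where $O(k)$ denotes terms of total degree $\geq k$ in $(u,y)$. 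I would also record the key algebraic identity $a+2b=0$, which is exactly the content of the computation $V_{xx}(x_0,0)=-8c_J(1-\sqrt{-c_J/\mu})=-2V_{yy}(x_0,0)$ carried out in Lemma~\ref{lemma52}.

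The heart of the argument is then to substitute these expansions into $F=V_{xx}V_y^2+V_{yy}V_x^2-2V_xV_yV_{xy}$ and to collect homogeneous parts in $(u,y)$. Since each of $V_x,V_y,V_{xy}$ starts at order one while $V_{xx},V_{yy}$ start at order zero, $F$ has no constant or linear part; a direct bookkeeping of the quadratic and cubic contributions yields
\begin{equation*}
F^{(2)} = ab\,(a u^2+b y^2),\qquad F^{(3)} = (b c_3+a d_3)\,u\,(a u^2+b y^2).
\end{equation*}
The decisive structural feature is that both homogeneous parts carry the common factor $a u^2+b y^2$, which up to the constant $\tfrac12$ is precisely the quadratic part of $V$ and therefore vanishes along the tangent cone of $\{V=0\}$ at $(x_0,0)$. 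Restricting to the line $y=\sqrt2\,u$ and using $a+2b=0$ gives $a u^2+b y^2=(a+2b)u^2=0$, so $F^{(2)}$ and $F^{(3)}$ vanish identically on the line. Consequently $\widetilde F(x)=F(x,\sqrt2(x-x_0))$ is $O(u^4)$, and in particular $\widetilde F'(x_0)=\widetilde F''(x_0)=\widetilde F'''(x_0)=0$, as claimed; the value $\widetilde F(x_0)=0$ is already known from $F(x_0,0)=V_x(x_0,0)^2V_{yy}(x_0,0)=0$ (Lemma~\ref{lemma53}).

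The only genuinely delicate step is the explicit assembly of $F^{(2)}$ and $F^{(3)}$ from the product of three truncated series: it is here that a mislabeled multinomial coefficient or a dropped cross-term would break the clean factorization, so I would collect the $u^3$ and $uy^2$ coefficients with particular care, verifying at the end that the coefficient of $uy^2$ in $F^{(3)}$ equals $b$ times, and that of $u^3$ equals $a$ times, the same quantity $bc_3+ad_3$. Everything else—the vanishing of the low-order parts, and the final cancellation—is forced by parity and by the identity $a+2b=0$, so no further structural input is needed.
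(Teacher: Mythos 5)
Your proof is correct and follows essentially the same route as the paper's: both arguments rest on the evenness of $V$ (hence of $F$) in $y$, which kills the odd-order mixed derivatives at $(x_0,0)$, and on the identity $V_{xx}(x_0,0)+2V_{yy}(x_0,0)=0$. Your Taylor-expansion bookkeeping, with the common factor $a u^2+b y^2$ in $F^{(2)}$ and $F^{(3)}$, reproduces exactly the paper's expressions $\widetilde F''(x_0)=2ab(a+2b)$ and $\widetilde F'''(x_0)=6(a+2b)(a d_3+b c_3)$, merely organized as a jet computation rather than a direct chain-rule differentiation of $F$.
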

\begin{proof} Similar to above, we compute that
\begin{eqnarray*}
\widetilde{F}''(x_0 )&=& F_{xx}(x_0, 0) + 2 F_{yy}(x_0, 0) = 0\\
\widetilde{F}'''(x_0 ) &=& F_{xxx}(x_0, 0) + 6F_{xyy}(x_0, 0) \\
&=& 6( V_{xx}(x_0, 0) + 2 V_{yy}(x_0, 0) )( V_{xx}(x_0, 0)V_{xyy}(x_0, 0) + V_{xxx}(x_0, 0)V_{yy}(x_0, 0) ) =0,
\end{eqnarray*} 
which complete the proof of the lemma.
\end{proof}

In view of the fact that the equations of $V$ and $F$ are symmetric with respect to the $x$-axis, the previous two lemmas give rise to the following corollary.
\begin{Corollary} There exists $\delta >0$ such that $F(x,y) <0$ for $(x,y) \in V^{-1}(0) \cap \left\{ (x,y) : x_0 - \delta < x < x_0 \right \}$, provided that $\mu < 16/17$. 
\end{Corollary}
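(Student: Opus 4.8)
The plan is to reduce the statement to a local analysis at the point $(x_0,0)$, which by Lemma \ref{lemma51} is a critical point of $V=V_{c_J}$ lying on $V^{-1}(0)$. I would first pass to shifted coordinates $u=x-x_0$, $v=y$ and write down the low-order Taylor expansion of $V$ at the origin. Because $V$ is even in $y$, every partial derivative of odd order in $y$ vanishes on the $x$-axis, so in particular $V_{xy}(x_0,0)=V_{xxy}(x_0,0)=V_{yyy}(x_0,0)=0$. Combined with the values of $V_{xx}(x_0,0)$ and $V_{yy}(x_0,0)$ computed in Lemma \ref{lemma52}, this gives $V_{xx}(x_0,0)=-2V_{yy}(x_0,0)$; writing $R:=V_{yy}(x_0,0)>0$, the quadratic part of $V$ is $\tfrac{R}{2}(v^2-2u^2)$, whose null cone is the pair of lines $v=\pm\sqrt2\,u$ of Lemma \ref{lemma52}.

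Next I would expand $F$ to the same orders. Using $V_x(x_0,0)=V_y(x_0,0)=V_{xy}(x_0,0)=0$ together with the parity in $y$, a direct computation shows that the quadratic part of $F$ equals $-2R^3(v^2-2u^2)$, i.e. it is exactly $-4R^2$ times the quadratic part of $V$, and that the cubic part of $F$ is again a multiple of $u(v^2-2u^2)$. Thus $F$ and $V$ have proportional Hessians at $(x_0,0)$ and the curves $\{V=0\}$ and $\{F=0\}$ share the tangent cone $v=\pm\sqrt2\,u$; the leading quadratic term of $F$ therefore cannot decide the sign of $F$ on $\{V=0\}$, and one is forced to the next order.

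By the $y$-symmetry of $V$ and $F$ it is enough to study the branch of $V^{-1}(0)$ with $x<x_0$ that is tangent to $\ell_+\colon v=\sqrt2\,u$. I would parametrize it as $v=\sqrt2\,u+w(u)$ with $w=O(u^2)$ and determine $w$ to leading order from the cubic part of $V$. The relevant input is that the restriction $\widetilde V(x)=V(x,\sqrt2(x-x_0))$ to $\ell_+$ satisfies $\widetilde V(x_0+u)=\tfrac16\widetilde V'''(x_0)\,u^3+O(u^4)$, which is precisely the content of the lemma computing $\widetilde V$ (its first three derivatives vanish at $x_0$). Feeding this back into the equation $V=0$ yields $v^2-2u^2=-\tfrac{1}{3R}\widetilde V'''(x_0)\,u^3+O(u^4)$ along the branch, and since the cubic part of $F$ still carries the factor $(v^2-2u^2)=O(u^3)$ it only contributes at order $u^4$. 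Hence
\[
F\big|_{V^{-1}(0)}=-2R^3\,(v^2-2u^2)+O(u^4)=\tfrac{2R^2}{3}\,\widetilde V'''(x_0)\,u^3+O(u^4).
\]
As $\widetilde V'''(x_0)>0$ exactly for $\mu<16/17$ and $u=x-x_0<0$ on the piece in question, the right-hand side is strictly negative for $0<x_0-x$ small, which provides the required $\delta$; the lemma giving $\widetilde F'(x_0)=\widetilde F''(x_0)=\widetilde F'''(x_0)=0$ serves as the consistency check that $F$ restricted to $\ell_+$ itself is $O(u^4)$, so that the decisive $u^3$-term genuinely comes from the bending of $\{V=0\}$ away from its tangent line.

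I expect the mutual tangency of $\{V=0\}$ and $\{F=0\}$ to be the main obstacle: because the two Hessians are proportional, the sign of $F$ along $\{V=0\}$ is a third-order phenomenon rather than a first- or second-order one, and the whole argument lives or dies on correctly tracking which terms survive. The careful points are (i) that the cubic part of $F$ also vanishes on the tangent cone, so that it is pushed to order $u^4$ on the curve, and (ii) that the surviving $u^3$ contribution inherits its sign from $\widetilde V'''(x_0)$, hence from the threshold $\mu<16/17$. Once these orders and signs are pinned down the conclusion follows, and the reflection symmetry across the $x$-axis extends it to the second branch.
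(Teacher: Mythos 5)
Your proof is correct and follows essentially the same route as the paper: it rests on the same three inputs (the common tangent lines $y=\pm\sqrt{2}(x-x_0)$ from Lemma \ref{lemma52}, the positivity of $\widetilde{V}'''(x_0)$ for $\mu<16/17$, and the vanishing of $\widetilde{F}'(x_0)=\widetilde{F}''(x_0)=\widetilde{F}'''(x_0)$), which the paper combines only implicitly while you spell out the Taylor-expansion bookkeeping. In particular, your observation that the cubic part of $F$ at $(x_0,0)$ is proportional to $u(v^2-2u^2)$ and hence contributes only at order $u^4$ along the branch of $V^{-1}(0)$ is exactly the content that the paper's terse ``the previous two lemmas give rise to the following corollary'' leaves to the reader, and you have the orders and signs right.
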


By means of   Theorem \ref{theoremsalomao} and  Lemma \ref{lemma53}, this completes the proof of Theorem \ref{theorem2}, see Figure \ref{fig:theorem2}.

\;

\section{Proof of Theorem \ref{theorem1}}\label{sec4}
A strategy is similar to the one in the previous section.  Recall  that the curve $U(q) = c_J$ is homeomorphic to the figure eight whose vertex is $(l,0)$.  The first lemma we need considers tangent lines to $U(q)=c_J$ at $(l,0)$.

\begin{Lemma}\label{lemma31} The curve $U(q)= c_J$ is tangent to  two lines $q_2 =\pm \sqrt{2}(q_1-l)$ at $(l,0)$. 
\end{Lemma}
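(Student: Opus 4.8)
The plan is to read off the two tangent directions at $(l,0)$ directly from the Hessian of $U$, mirroring the computation in Lemma \ref{lemma52}. The two ingredients that make this work are that $(l,0)$ is a critical point of $U$ and that $U$ is symmetric under the reflection $q_2 \mapsto -q_2$. Recall that $L=(l,0)$ is the unique critical point of the potential $U(q) = -(1-\mu)/|q-E|-\mu/|q-M|$, whose centers $E,M$ lie on the $q_1$-axis, and that $U(l,0)=c_J$ since $H(L)=c_J$ with vanishing momentum. Consequently the linear part of the Taylor expansion of $U-c_J$ at $(l,0)$ vanishes, so the local shape of the level set $U=c_J$ is governed by the Hessian; moreover the reflection symmetry forces $U_{q_1q_2}(l,0)=0$, so the Hessian is diagonal there.

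The core of the argument is a routine computation of the two pure second derivatives. The key observation is that for a single Newtonian term $-1/r$ whose center lies on the $q_1$-axis, one has at every point of that axis the identities $\partial_{q_1}^2(-1/r)=-2/r^3$ and $\partial_{q_2}^2(-1/r)=1/r^3$. Summing the two centers with weights $1-\mu$ and $\mu$ yields
\begin{equation*}
U_{q_1 q_1}(l,0) = -2P, \qquad U_{q_2 q_2}(l,0) = P, \qquad P := \frac{1-\mu}{l^3} + \frac{\mu}{(1-l)^3} > 0 .
\end{equation*}
In particular the Hessian $\operatorname{diag}(-2P,P)$ is indefinite, which confirms that $(l,0)$ is a saddle and that $U=c_J$ crosses itself there like a figure eight, and it records the crucial relation $U_{q_1q_1}(l,0)=-2\,U_{q_2q_2}(l,0)$.

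Finally I extract the slopes. Differentiating $U(q_1,q_2(q_1))=c_J$ twice along a branch and evaluating at $(l,0)$, where $U_{q_2}=U_{q_1q_2}=0$, gives $U_{q_1q_1}(l,0)+U_{q_2q_2}(l,0)(dq_2/dq_1)^2=0$, hence $(dq_2/dq_1)^2 = -U_{q_1q_1}(l,0)/U_{q_2q_2}(l,0)=2$, so the two branches are tangent to $q_2=\pm\sqrt{2}\,(q_1-l)$. Equivalently, the quadratic part of $U-c_J$ factors as $\tfrac{P}{2}\bigl(q_2-\sqrt{2}(q_1-l)\bigr)\bigl(q_2+\sqrt{2}(q_1-l)\bigr)$, which is exactly the tangent-cone form of the claim. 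I expect no serious conceptual obstacle; the one point deserving care is the legitimacy of the implicit differentiation at the self-intersection, which I will settle either by the tangent-cone factorization just displayed or, if a smoother statement is wanted, by invoking the Morse lemma to split $U=c_J$ into two smooth branches through $(l,0)$ and computing the tangent of each. Everything else is a direct second-derivative computation whose details I will defer to the appendix.
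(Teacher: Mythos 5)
Your proposal is correct and follows essentially the same route as the paper: both use that $(l,0)$ is the critical point of $U$ with $U_{q_2}=U_{q_1q_2}=0$ there, implicitly differentiate $U=c_J$ twice to get $(dq_2/dq_1)^2=-U_{q_1q_1}(l,0)/U_{q_2q_2}(l,0)$, and compute $U_{q_1q_1}(l,0)=-2\bigl(\tfrac{1-\mu}{l^3}+\tfrac{\mu}{(1-l)^3}\bigr)=-2U_{q_2q_2}(l,0)$ to obtain the slope $\pm\sqrt{2}$. Your per-term identities for $-1/r$ on the axis and the tangent-cone factorization are nice streamlinings of the same computation, with the latter also cleanly handling the rigor at the self-intersection.
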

\begin{proof} Since $U_{q_1}(l,0) = U_{q_1 q_2}(l,0)=0$, we obtain $U_{q_1 q_1}(l,0) +U_{q_2q_2}(l,0)(dy/dx)^2 =0$ (cf. Lemma \ref{lemma52}). We compute that
\begin{equation*}
\bigg( \left. \begin{matrix} \displaystyle \frac{dq_2}{dq_1} \end{matrix} \right|_{(q_1, q_2) = (l,0)} \bigg)^2 = - \frac{ U_{q_1 q_1}(l,0)}{U_{q_2q_2}(l,0)} = - \frac{ - \frac{2(1-\mu)}{l^3} - \frac{ 2 \mu}{(1-l)^3}}{   \frac{ (1-\mu)}{l^3} + \frac{   \mu}{(1-l)^3} }= 2         .
\end{equation*}
This finishes the proof of the lemma.
\end{proof}

We define
\begin{equation}\label{functionV}
V(q_1):=U( q_1,  \sqrt{2}(q_1-l) ) -c_J = - \frac{1-\mu}{\sqrt{ q_1^2 + 2(q_1 - l)^2} } - \frac{\mu}{\sqrt{ (q_1-1)^2+ 2(q_1-l)^2}} -c_J.
\end{equation}
The second lemma is

\begin{Lemma} \label{lemmaderivatievedd} $\bullet$ For $\mu <1/2$, $V(l)=V'(l) =V''(l)=0$ and $V'''(l)>0$.

\hspace{1.7cm} $\bullet$ For $\mu =1/2$, $V(l)=V'(l) =V''(l)=V'''(l)=0$ and $V''''(l)>0$.
\end{Lemma}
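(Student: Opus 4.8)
The plan is to differentiate the one--variable function $V(q_1)=U(q_1,\sqrt2(q_1-l))-c_J$ from (\ref{functionV}) by the chain rule and to exploit two structural facts about $U$. First, $L=(l,0,0,0)$ is the critical point of the mechanical Hamiltonian with $H(L)=c_J$, so $(l,0)$ is a critical point of $U$ lying on the level set $U=c_J$; thus $U(l,0)=c_J$ and $\nabla U(l,0)=0$. Second, $U$ is even in $q_2$, so every partial derivative of $U$ of odd order in $q_2$ vanishes along $\{q_2=0\}$. Writing $m=\sqrt2$ and evaluating all partials at $(l,0)$, the chain rule gives $V(l)=U(l,0)-c_J$, $V'(l)=U_{q_1}+mU_{q_2}$, and $V''(l)=U_{q_1q_1}+2mU_{q_1q_2}+m^2U_{q_2q_2}$. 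The first two vanish by $U(l,0)=c_J$ and $\nabla U(l,0)=0$; in the third the mixed term drops out by evenness in $q_2$, and $U_{q_1q_1}(l,0)+2U_{q_2q_2}(l,0)=0$ is exactly the identity behind Lemma \ref{lemma31} (the slope $\sqrt2$ was singled out precisely so that $V''(l)=0$). This disposes of $V(l)=V'(l)=V''(l)=0$ for all $\mu$ without any real computation.

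For $V'''(l)$ I would again differentiate and discard the odd-$q_2$-order terms, which leaves $V'''(l)=U_{q_1q_1q_1}(l,0)+6U_{q_1q_2q_2}(l,0)$. The point that makes this explicit is that each summand of $U=-(1-\mu)/|q-E|-\mu/|q-M|$ is a single Newtonian potential, whose third axial derivatives at a point at signed distance $d$ and distance $r=|d|$ from the focus are multiples of $d/r^5$. Carrying this out with $r_1=|q-E|=l$ and $r_2=|q-M|=1-l$ gives
\[
V'''(l)=12\left(\frac{\mu}{(1-l)^4}-\frac{1-\mu}{l^4}\right).
\]
I would then feed in the defining relation of the Lagrange point, $(1-\mu)(1-l)^2=\mu l^2$, i.e.\ $l/(1-l)=\sqrt{(1-\mu)/\mu}$, which collapses the bracket to
\[
V'''(l)=\frac{12\,\mu\,(2l-1)}{l^2(1-l)^4}.
\]
Since $l>1/2$ if and only if $\mu<1/2$, this is positive for $\mu<1/2$ and zero for $\mu=1/2$, which is the first bullet and the vanishing asserted in the second.

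It remains to treat $\mu=1/2$, where $l=1/2$ and the genuine work lies. Here the extra symmetry $U(q_1,q_2)=U(1-q_1,q_2)$ combines with evenness in $q_2$ to give $V(1-q_1)=V(q_1)$, so $V$ is even about $q_1=1/2$; this re-explains $V'(l)=V'''(l)=0$ conceptually and shows that only even-order derivatives can be nonzero. For $V''''(l)$ the chain rule, after again deleting the odd-$q_2$-order partials, reduces to $V''''(l)=U_{q_1q_1q_1q_1}(l,0)+12U_{q_1q_1q_2q_2}(l,0)+4U_{q_2q_2q_2q_2}(l,0)$, and the same single-potential bookkeeping (the relevant fourth axial derivatives of $-k/|q-a|$ being signed integer multiples of $k/r^5$) produces a strictly positive multiple of $k/r^5$ from each focus; with the two equal masses $k=1/2$ at distance $1/2$ from $(1/2,0)$ this yields $V''''(l)>0$.

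The main obstacle is the degenerate case $\mu=1/2$: the third derivative vanishes, so positivity must be decided at fourth order, and one has to verify that the three surviving fourth-order contributions combine with the correct signs to give a strictly positive total. The symmetry $V(1-q_1)=V(q_1)$ is what keeps this honest---it guarantees that the odd derivatives vanish identically rather than by accident---while the reduction of every partial of $U$ to axial derivatives of a single Newtonian potential keeps the remaining fourth-order computation mechanical. The sign analysis of $V'''$ for $\mu<1/2$, by contrast, is immediate once the Lagrange relation is substituted.
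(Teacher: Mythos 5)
Your argument is correct and lands on exactly the two formulas the paper's proof needs, namely $V'''(l)=-12(1-\mu)/l^{4}+12\mu/(1-l)^{4}$ (which the Lagrange relation $(1-\mu)(1-l)^{2}=\mu l^{2}$ turns into a positive multiple of $2l-1$) and $V''''(1/2)=2688$, but you reach them by a cleaner route. The paper differentiates the explicit one-variable expression (\ref{functionV}) four times symbolically, relegates the resulting page-long formulas to the appendix, and only then evaluates at $q_1=l$; you instead apply the multivariate chain rule at the point, use evenness of $U$ in $q_2$ to delete every partial of odd order in $q_2$, and reduce what survives to axial derivatives of a single Newtonian potential, so that each focus contributes a closed-form multiple of $kd/r^{5}$ at third order and of $k/r^{5}$ at fourth order. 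The identity $U_{q_1q_1}(l,0)+2U_{q_2q_2}(l,0)=0$ that kills $V''(l)$ is indeed the same one behind Lemma \ref{lemma31}, and the reflection symmetry $V(1-q_1)=V(q_1)$ for $\mu=1/2$, which the paper does not invoke, gives a conceptual reason for the vanishing of the odd derivatives there. The one place you stop short of the arithmetic is the sign of the fourth-order combination: on the axis the relevant derivatives of $-k/|q-a|$ are $-24k/r^{5}$, $12k/r^{5}$ and $-9k/r^{5}$, so $U_{q_1q_1q_1q_1}+12U_{q_1q_1q_2q_2}+4U_{q_2q_2q_2q_2}$ receives $(-24+144-36)k/r^{5}=84k/r^{5}>0$ from each focus; with $k=1/2$ and $r=1/2$ the two foci together give $84\cdot 32=2688$, matching the paper's value, so the deferred step does close.
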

\begin{proof} From 
\begin{eqnarray*}
V'(q_1) &=& \frac{(1-\mu)(3q_1 - 2l)}{\sqrt{ q_1^2 + 2(q_1-l)^2 }^3} +\frac{ \mu (3q_1 -1 - 2l)}{\sqrt{ (q_1-1)^2 + 2(q_1-l)^2 }^3} \\
V''(q_1)&=& 6(q_1-l)\bigg(  \frac{(1-\mu)(l-3q_1)}{\sqrt{ q_1^2 + 2(q_1-l)^2 }^5} +\frac{ \mu (l-3q_1 +2)}{\sqrt{ (q_1-1)^2 + 2(q_1-l)^2 }^5} \bigg) \\
V'''(q_1)&=& -6\bigg(  \frac{(1-\mu)(  l^2 - 12lq_1 + 9q_1^2      )(2l-3q_1)}{\sqrt{ q_1^2 + 2(q_1-l)^2 }^7} +\frac{ \mu ( l^2 - 12lq_1 + 9q_1^2 + 10l - 6q_1 -2)(2l-3q_1+1)  }{\sqrt{ (q_1-1)^2 + 2(q_1-l)^2 }^7} \bigg) \\
V''''(q_1) &=& 12a(q_1, l) \bigg( \frac{ 1-\mu }{\sqrt{ q_1^2 + 2(q_1-l)^2 }^9} + \frac{\mu}{\sqrt{ (q_1-1)^2 + 2(q_1-l)^2 }^9}\bigg)\\
&& + \frac{ 12 \mu (  -100l^3 + 504l^2 q_1 - 648 lq_1^2 + 216q_1^3 - 102l^2 + 144lq_1 + 20l - 48q_1 +7    ) }{\sqrt{ (q_1-1)^2 + 2(q_1-l)^2 }^9}, 
\end{eqnarray*}
where 
\begin{equation*}
a(q_1, l) = 13l^4 + 48l^3 q_1 - 324l^2 q_1^2 + 432l q_1^3 - 162 q_1^4,
\end{equation*}
that $V(l)=V'(l)=V''(l)=0$ is straightforward. We then observe that
\begin{eqnarray*}
V'''(l)&=& - \frac{ 6(1-\mu)(-2l^2)(-l)}{l^7} - \frac{6\mu(1-l)(-2)(1-l)^2}{(1-l)^7}\\
&=& - \frac{12(1-\mu)}{l^4 } + \frac{ 12 \mu}{ (1-l)^4} \\
&=& \frac{12(2l-1)}{l^2 (1-l)^2 ( 2l^2-2l+1)},
\end{eqnarray*}
where in the last step we used the relation $(1-\mu)(1-l)^2 = \mu l^2$. Since $l=l(\mu)$ is a decreasing function with $l(1/2)=1/2$, this proves the assertions for the third derivative. Finally, we see that $V''''(1/2) = 2688$ and  this completes the proof of the lemma.
\end{proof}

\begin{Corollary} \label{maincor}  There exists $\epsilon >0$ such that $V(q_1) <0$ for $q_1 \in (l-\epsilon, l)$, provided that $\mu <1/2$.
\end{Corollary}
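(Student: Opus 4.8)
The plan is to read off Corollary \ref{maincor} as an immediate consequence of the Taylor expansion of $V$ at $q_1 = l$, using the derivative information already assembled in Lemma \ref{lemmaderivatievedd}. Since $V$ is a smooth function of a single variable near $l$ (the denominators $\sqrt{q_1^2 + 2(q_1-l)^2}$ and $\sqrt{(q_1-1)^2 + 2(q_1-l)^2}$ do not vanish at $q_1 = l$, because $l \in (0,1)$), I may expand $V$ in a Taylor series about $l$ and the leading nonzero term controls the sign on a one-sided neighborhood.

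First I would invoke Lemma \ref{lemmaderivatievedd}: for $\mu < 1/2$ we have $V(l) = V'(l) = V''(l) = 0$ while $V'''(l) > 0$. Writing the third-order Taylor expansion with remainder,
\begin{equation*}
V(q_1) = \frac{V'''(l)}{6}(q_1 - l)^3 + o\big( (q_1-l)^3 \big) \qquad \text{as } q_1 \to l,
\end{equation*}
I observe that the dominant term is a positive multiple of $(q_1 - l)^3$. For $q_1 \in (l-\epsilon, l)$ with $\epsilon$ small we have $(q_1 - l)^3 < 0$, so the leading term is negative; choosing $\epsilon$ small enough that the remainder is dominated by the cubic term forces $V(q_1) < 0$ on that left-hand interval. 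This is precisely the claim.

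The only step requiring any care is the justification that the remainder is genuinely subordinate to the cubic term, i.e.\ that $V$ is $C^3$ (indeed smooth) near $l$ so the Peano or Lagrange form of Taylor's theorem applies; this follows because the two radicands are strictly positive at $q_1 = l$ and hence $V$ is real-analytic in a neighborhood. There is no real obstacle here: the entire content of the corollary has already been front-loaded into the derivative computations of Lemma \ref{lemmaderivatievedd}, and the corollary is simply the sign analysis of the first surviving Taylor coefficient. (The parallel statement for $\mu = 1/2$, where the first three derivatives vanish and $V''''(l) > 0$, would instead give $V(q_1) > 0$ on a full punctured neighborhood, consistent with convexity persisting in the equal-mass case; but that regime is not what the present corollary asserts.)
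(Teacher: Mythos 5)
Your proof is correct and coincides with the paper's intended argument: the corollary is stated as an immediate consequence of Lemma \ref{lemmaderivatievedd}, and the only way to extract it is exactly your Taylor-expansion sign analysis of the leading cubic term, with smoothness of $V$ near $l$ guaranteed because both radicands are strictly positive there. Nothing is missing.
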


The following corollary shows that if $\mu \geq 1/2$, i.e., either the Moon is heavier than the Earth or the two primaries have the equal mass,  then the Earth component $\mathcal{K}_c^E$ lies between  the two tangent lines given in Lemma \ref{lemma31}. 

\begin{Corollary}\label{propfiberwise} For $\mu \geq 1/2$, it holds that 
\begin{equation}
-\sqrt{2}(q_1 - l) < q_2 < \sqrt{2}(q_1 - l)
\end{equation}
for any $(q_1, q_2) \in \mathcal{K}_c^E$, provided that $c<c_J$.
\end{Corollary}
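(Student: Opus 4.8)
The plan is to convert the two-dimensional containment into a one-variable sign statement for the potential along the two tangent lines of Lemma \ref{lemma31}. Because $U(q_1,q_2)=U(q_1,-q_2)$, the restriction of $U$ to either tangent line is the same function of $q_1$, namely the function $V$ of (\ref{functionV}), so it suffices to control the sign of $V(q_1)=U(q_1,\sqrt{2}(q_1-l))-c_J$ for $q_1\le l$. Concretely, I would prove that $V(q_1)\ge 0$ on $(-\infty,l]$ when $\mu\ge 1/2$, with equality only at the vertex $q_1=l$. Granting this, both tangent rays $\{q_2=\pm\sqrt{2}(l-q_1),\ q_1\le l\}$ satisfy $U\ge c_J>c$ and are therefore disjoint from $\mathcal{K}_c^E\subset\{U\le c\}$; since $\mathcal{K}_c^E$ is connected, bounded and contains the interior point $E=(0,0)$, and since its rightmost point lies on the $q_1$-axis at some $q_1^-<l$ (the bracket in $U_{q_2}=q_2[(1-\mu)/r_E^3+\mu/r_M^3]$ never vanishes, so $q_1$-extrema occur only on $\{q_2=0\}$), the component cannot cross these rays and is forced into the open sector between the two tangent lines, which is exactly the containment asserted in the statement.

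The local behaviour at the vertex is essentially free. The slope $\sqrt{2}$ of Lemma \ref{lemma31} was chosen so that the tangent line osculates $U=c_J$ to second order; thus $V(l)=V'(l)=V''(l)=0$ for every $\mu$, and the sign near $l$ is decided by the first surviving derivative from Lemma \ref{lemmaderivatievedd}. For $\mu\ge1/2$ one has $l\le1/2$, so the formula $V'''(l)=12(2l-1)/\bigl(l^2(1-l)^2(2l^2-2l+1)\bigr)$ gives $V'''(l)\le0$; when $\mu>1/2$ this is strictly negative and $V(q_1)\sim\tfrac16V'''(l)(q_1-l)^3>0$ as $q_1\uparrow l$, while for $\mu=1/2$ the cubic term vanishes and $V''''(1/2)=2688>0$ gives $V(q_1)\sim\tfrac1{24}V''''(l)(q_1-l)^4>0$. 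In both cases $V>0$ on a punctured left-neighbourhood of $l$, which is the exact mirror image of Corollary \ref{maincor} for $\mu<1/2$.

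The remaining and decisive step is to promote this local positivity to $V\ge0$ on all of $(-\infty,l]$. The two endpoints are favourable: $V(l)=0$, and $V(q_1)\to-c_J>0$ as $q_1\to-\infty$ because $U\to0$ far from the primaries while $c_J<0$. I would therefore exclude any interior dip by proving the monotonicity $V'<0$ on $(-\infty,l)$, whence $V$ decreases strictly from $-c_J$ to $0$ and stays positive. From $V'(q_1)=(1-\mu)(3q_1-2l)/r_E^3+\mu(3q_1-1-2l)/r_M^3$ (with $r_E,r_M$ as in (\ref{functionV})), both numerators are $\le0$ on $(-\infty,2l/3]$ (the second being $\le-1$), so $V'<0$ there at once. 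On the transition interval $(2l/3,l)$ the first numerator turns positive while the second stays negative, and here I would clear denominators and reduce $V'<0$ to $(1-\mu)(3q_1-2l)\,r_M^3<\mu(1+2l-3q_1)\,r_E^3$, estimating the two sides using $\mu\ge1/2$ and the defining relation $(1-\mu)(1-l)^2=\mu l^2$; the substitution $s=l-q_1\ge0$, under which $r_E^2=l^2-2ls+3s^2$ and $r_M^2=(1-l)^2+2(1-l)s+3s^2$, keeps the algebra tractable.

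I expect this global sign analysis on $(2l/3,l)$ to be the main obstacle. The osculation at the vertex and the sign far from the primaries are automatic, but excluding a negative excursion where the two Newtonian terms genuinely compete requires real control, and it is precisely here that the hypothesis $\mu\ge 1/2$ (equivalently $l\le1/2$) must enter essentially, since for $\mu<1/2$ Corollary \ref{maincor} shows the sign is reversed near the vertex. Once $V\ge0$ on $(-\infty,l]$ is secured, the connectedness argument of the first paragraph closes the proof, and the strict inequality $c<c_J$ upgrades containment to the strict open-sector inclusion claimed in the statement.
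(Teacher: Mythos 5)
Your reduction is the right one and coincides with the paper's: restrict $U$ to the tangent lines of Lemma \ref{lemma31}, prove that $V(q_1)=U(q_1,\sqrt{2}(q_1-l))-c_J$ is strictly positive for $q_1<l$, and conclude by a connectedness argument; your analysis at the vertex (sign of $V'''(l)$ via $l\le 1/2$, and $V''''(1/2)>0$ when $\mu=1/2$) and at $q_1\to-\infty$ is correct. The problem is that the one step carrying all the content --- ruling out a zero of $V'$ on $(2l/3,l)$, where the two Newtonian terms genuinely compete --- is only announced, and the method you propose for it is in real danger of failing. Writing the desired inequality as $G(q_1):=\mu(1+2l-3q_1)r_E^3-(1-\mu)(3q_1-2l)r_M^3>0$, one has $G=-r_E^3r_M^3V'$, so $G(l)=G'(l)=0$ and $G$ vanishes to exactly second order at the endpoint $q_1=l$; indeed at $q_1=l$ the two sides you want to compare are equal, this being precisely the defining relation $(1-\mu)(1-l)^2=\mu l^2$. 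Hence no termwise estimate of the two sides that is not sharp at $q_1=l$ can succeed; you must first extract the factor $(l-q_1)^2$ and then show the remaining expression is positive, which is a genuine polynomial computation that the proposal does not carry out. (Numerically the margin is tiny: for $\mu=0.9$, $l=1/4$, one has $V'(q_1)\approx\tfrac12 V'''(l)(q_1-l)^2$, of order $10^{-3}$ already at $q_1=0.2475$.)

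The paper avoids the general-$\mu$ computation altogether. It first reduces to the single case $\mu=1/2$ by noting that, after translating the vertex to $(1/2,0)$, one has $\mathcal{K}^E_{c,\mu}\subset\mathcal{K}^E_{-2,1/2}$ for $\mu>1/2$ and $c<c_J$, while the tangent slope $\sqrt{2}$ of Lemma \ref{lemma31} is independent of $\mu$. For $\mu=1/2$ it then squares the equation $V'(q_1)=0$, factors out $(1-2q_1)^2$ (exactly the second-order degeneracy described above), and is left with showing that the explicit quartic $324q_1^4-648q_1^3+504q_1^2-180q_1+23$ has no root in $(1/3,1/2)$, which is verified in the appendix by two differentiations. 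To complete your argument you should either import this monotonicity-in-$\mu$ reduction, or perform the analogous factorization of $(l-q_1)^2$ from $G$ for general $l\le 1/2$; as written, the proposal is a correct plan whose decisive step is missing.
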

\begin{proof} We abbreviate by $\mathcal{K}_{c, \mu}^E$ the Earth component corresponding to the energy level $c$ and the mass ratio $\mu$. Given $\mu_1$, we translate $\mathcal{K}_{c_J, \mu_1}^E$ so that its vertex becomes $(1/2, 0)$. Then we note that $\mathcal{K}_{c, \mu_1}^E \subset \mathcal{K}_{-2, 1/2}^E$ for any $\mu > 1/2$. Since the assertion of Lemma \ref{lemma31} holds true for any $\mu$, this implies that   it suffices to prove  for $(q_1, q_2) \in \mathcal{K}_{-2, 1/2}^E$.

Suppose that $\mu = 1/2$ and hence $l=1/2$. We differentiate $V$ 
\begin{equation*}
 V'(q_1) =   \frac{ 3q_1 - 1}{2\sqrt{ q_1^2 + 2(q_1 - 1/2)^2 }^3}+\frac{ 3q_1 - 2}{2\sqrt{ (q_1-1)^2 + 2(q_1 - 1/2)^2 }^3}.
\end{equation*}
We claim that $V$ admits no critical points. Assume that $q_1$ is a critical point. Since $q_1 <1/2$, we see that $3q_1 -2<0$ and hence the term $3q_1 -1$ must be positive. Suppose that $q_1 >1/3$. Then we obtain
\begin{equation*}
V'(q_1) =0 \;\;\;  \Leftrightarrow \;\;\; \frac{1}{8}(1-2q_1)^2 (324q_1^4 - 648q_1^3 + 504q_1^2 - 180q_1^2 +23)=0.
\end{equation*}
One can easily see that  the term $324q_1^4 - 648q_1^3 + 504q_1^2 - 180q_1 +23$ is nonvanishing for $q_1 \in (1/3, 1/2)$ and this proves the claim. The corollary now follows immediately from Lemma \ref{lemmaderivatievedd}.  
\end{proof}

To check whether the curve $U(q) = c_J$ bounds a strictly convex domain or not, we examine   its curvature. The curvature is given by
\begin{equation}
\kappa =  \frac{  U_{q_1q_1} U_{q_2}^2 + U_{q_1}^2U_{q_2q_2}- 2U_{q_1q_2}U_{q_1}U_{q_2}  }{ \sqrt{ U_{q_1}^2 + U_{q_2}^2}^3} ,
\end{equation}
where it has a unique singularity at $(l,0)$. In what follows we consider the numerator, which is abbreviated by
\begin{equation}\label{cuvaturedbueartr}
C:=  U_{q_1q_1} U_{q_2}^2 + U_{q_1}^2U_{q_2q_2}- 2U_{q_1q_2}U_{q_1}U_{q_2}.
\end{equation}
Note that
\begin{equation*}
\left \{ (q_1, q_2)  \in \R^2 : C(q_1, q_2) = 0 \right \} = \left \{ (q_1, q_2 ) \in \R^2 \setminus \left \{ (l,0) \right \} :  \kappa =0 \right \} \cup \left \{ (l,0) \right \}. 
\end{equation*}
The function $C$ is explicitly given by
\begin{equation} \label{nusdoihs}
C(q) =    \frac{(1-\mu)^3}{r_1^7}  + \frac{\mu^3}{r_2^7} + \frac{ \mu(1-\mu)^2 (f(q) +r_2^2 g(q)) }{r_1^6r_2^5} + \frac{ \mu^2(1-\mu)( f(q )+r_1^2 g(q))}{ r_1^5r_2^6}  ,
\end{equation}
where  $r_1 = |q-E|$,  $r_2 = |q-M|$, $ f(q_1,q_2) = q_1^4 - 2q_1^3 + 2q_1^2q_2^2 + q_1^2 - 2q_1q_2^2 + q_2^4 - 2q_2^2 $ and $g(q_1, q_2) = 2q_1^2 - 2q_1 + 2q_2^2$.    From 
$$ f(q_1, q_2) = 0 \;\;\; \Longleftrightarrow\;\;\; \bigg(q_1-\frac{1}{2}\bigg)^2 + \bigg( q_2 \pm \frac{1}{\sqrt{2}}\bigg)^2 = \frac{3}{4}$$
and 
$$g(q_1, q_2) = 0 \;\;\; \Longleftrightarrow\;\;\; \bigg(q_1-\frac{1}{2}\bigg)^2 +  q_2^2 = \frac{1}{4} $$
we see that the curve $C=0$ must lie in the bounded region
\begin{equation}\label{regionpositieC}
\left \{ (q_1, q_2) : \bigg(q_1-\frac{1}{2}\bigg)^2 + \bigg( q_2 - \frac{1}{\sqrt{2}}\bigg)^2 < \frac{3}{4} \right \} \cup \left \{ (q_1, q_2) : \bigg(q_1-\frac{1}{2}\bigg)^2 + \bigg( q_2 + \frac{1}{\sqrt{2}}\bigg)^2 < \frac{3}{4} \right \}.
\end{equation}

\begin{Lemma}\label{curvaturpositive} The function $\widetilde{C}(q_1) := C(q_1, 0)$ has two singularities at $q_1=0, $ $1$. Moreover, $\widetilde{C} \geq 0$ and the equality holds if and only if $q_1 = l$. 
\end{Lemma}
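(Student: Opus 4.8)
The plan is to evaluate the curvature numerator $C$ along the $q_1$-axis by setting $q_2=0$ in the explicit formula (\ref{nusdoihs}), using that $r_1=|q_1|$, $r_2=|q_1-1|$, and that the auxiliary functions simplify on the axis. Setting $q_2=0$ gives $f(q_1,0)=q_1^4-2q_1^3+q_1^2=q_1^2(q_1-1)^2=r_1^2r_2^2$ and $g(q_1,0)=2q_1^2-2q_1=2q_1(q_1-1)$. Substituting these into (\ref{nusdoihs}) should collapse the four-term expression into something homogeneous enough to factor. I expect the singularities at $q_1=0$ and $q_1=1$ to appear immediately, since these are exactly the points where $r_1$ or $r_2$ vanishes and the denominators $r_1^7,r_2^7$ blow up; these are the Earth and Moon themselves.

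The heart of the argument is to show $\widetilde{C}(q_1)=C(q_1,0)\ge 0$ with equality exactly at $q_1=l$. First I would clear the common denominator $r_1^7 r_2^7$ and study the resulting polynomial numerator $P(q_1)$ in the variable $q_1$, treating the signs of $r_1=|q_1|$ and $r_2=|q_1-1|$ piecewise according to whether $q_1<0$, $0<q_1<1$, or $q_1>1$. On the interval containing the Earth component the relevant sign data for $g$ (negative there) must be tracked carefully. The key simplification I anticipate is that, after substituting $f(q_1,0)=r_1^2r_2^2$ and $g(q_1,0)=2q_1(q_1-1)$, the two cross terms combine with the two pure terms so that $\widetilde C$ becomes a perfect square (or a square times a manifestly positive factor). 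This would force $\widetilde C\ge 0$ automatically, and the equality locus would then be the vanishing of that square.

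To pin down that the unique zero is $q_1=l$, I would use the characterization of $l$ as the unique critical point of the potential $U$ on the $q_1$-axis, equivalently the point where $U_{q_1}(l,0)=0$. Since along the axis one has $q_2=0$ and $U_{q_1q_2}(q_1,0)=0$ by the reflection symmetry of $U$ in the $q_1$-axis, the numerator (\ref{cuvaturedbueartr}) reduces on the axis to $C(q_1,0)=U_{q_1q_1}U_{q_2}^2+U_{q_1}^2U_{q_2q_2}$ with $U_{q_2}(q_1,0)=0$, hence $\widetilde C(q_1)=U_{q_1}(q_1,0)^2\,U_{q_2q_2}(q_1,0)$. This is the decisive reduction: it exhibits $\widetilde C$ as a product of a square, $U_{q_1}^2$, and the factor $U_{q_2q_2}(q_1,0)$. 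It then suffices to check that $U_{q_2q_2}(q_1,0)>0$ for all $q_1$ in the relevant range away from the Earth and Moon, which is a one-variable computation of the transverse second derivative of the potential.

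The main obstacle will be verifying the sign $U_{q_2q_2}(q_1,0)>0$ over the whole Earth-component range of $q_1$, since $U_{q_2q_2}$ is a sum of two terms with differing signs once one differentiates $-(1-\mu)/r_1-\mu/r_2$ twice in $q_2$ and restricts to the axis. I would compute $U_{q_2q_2}(q_1,0)=(1-\mu)/|q_1|^3+\mu/|q_1-1|^3$, which is a sum of two strictly positive terms and hence positive everywhere off the two centers; this makes the sign check immediate and confirms that $\widetilde C$ vanishes precisely when $U_{q_1}(q_1,0)=0$, i.e. at $q_1=l$. The only remaining care is the bookkeeping at the singular points $q_1=0,1$, where $U_{q_2q_2}$ and thus $\widetilde C$ genuinely blow up, matching the claimed singularities of $\widetilde C$.
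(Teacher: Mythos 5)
Your proposal is correct and its decisive step coincides exactly with the paper's proof: on the axis $U_{q_2}(q_1,0)=0$, so $C(q_1,0)=U_{q_1}(q_1,0)^2\,U_{q_2q_2}(q_1,0)$ with $U_{q_2q_2}(q_1,0)=\tfrac{1-\mu}{|q_1|^3}+\tfrac{\mu}{|q_1-1|^3}>0$, the singularities are $q_1=0,1$, and the zero locus is precisely the unique critical point $q_1=l$. The preliminary manipulation of the explicit four-term formula for $C$ in your first two paragraphs is superfluous once you have this reduction, which is all the paper uses.
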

\begin{proof} Plugging $q_2=0$ in (\ref{cuvaturedbueartr}) gives rise to
\begin{equation*}
C(q_1, 0) = U_{q_1}(q_1, 0)^2 U_{q_2q_2}(q_1, 0)=\bigg( \frac{ (1-\mu) q_1}{ |q_1|^2 } + \frac{\mu(q_1 - 1)}{|q_1-1|^3} \bigg)^2 \bigg( \frac{1-\mu}{|q_1|^3} +\frac{\mu}{|q_1 - 1|^3 } \bigg) \geq 0.
\end{equation*} 
Since $(l,0)$ is the only critical point of $U$, this completes the proof of the lemma.
\end{proof}

The curve $C=0$ might be comprised of several connected component. In view of the previous lemma we concentrate on the connected component, denoted by $\Gamma$, which intersects the $q_1$-axis at $(l,0)$. Note that in the region bounded by $\Gamma$ we have $C<0$ and hence $\kappa <0$. 

\begin{Lemma}\label{lemma32} 
The connected component $\Gamma$ is tangent to the two lines $q_ 2 = \pm \sqrt{2}(q_1 - l)$ at $(l,0)$.
\end{Lemma}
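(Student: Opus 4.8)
The plan is to read off the tangent cone of the curve $C=0$ at its singular point $(l,0)$ by isolating the lowest-order homogeneous part of the Taylor expansion of $C$ there, exactly as was done for the curve $F=0$ in Lemma~\ref{lemma52}. The key observation is that $(l,0)$ is an ordinary double point of $C=0$, so the two tangent directions are the two linear factors of the leading quadratic form.

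First I would pass to the local coordinates $u=q_1-l$, $v=q_2$ and record the vanishing relations at $(l,0)$. Since $(l,0)$ is the unique critical point of $U$, we have $U_{q_1}(l,0)=U_{q_2}(l,0)=0$. Moreover $U$ is invariant under the reflection $q_2\mapsto -q_2$, so any derivative of $U$ with an odd number of $q_2$-differentiations vanishes identically on $\{q_2=0\}$; in particular $U_{q_1q_2}(l,0)=0$. Writing $a:=U_{q_1q_1}(l,0)$ and $b:=U_{q_2q_2}(l,0)$, these relations give $U_{q_1}=a\,u+O(2)$ and $U_{q_2}=b\,v+O(2)$ near $(l,0)$.

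Next I would substitute these expansions into
\[
C = U_{q_1q_1}U_{q_2}^2 + U_{q_1}^2U_{q_2q_2} - 2U_{q_1q_2}U_{q_1}U_{q_2}.
\]
The first two terms contribute $ab^2v^2$ and $a^2b\,u^2$ at order two, while the cross term is of order at least three because $U_{q_1q_2}$, $U_{q_1}$ and $U_{q_2}$ each vanish to first order at $(l,0)$. Hence the lowest-order homogeneous part of $C$ is the quadratic form $Q(u,v)=ab\,(a\,u^2+b\,v^2)$. By Lemma~\ref{lemma31} one has $-a/b=-U_{q_1q_1}(l,0)/U_{q_2q_2}(l,0)=2$, so $a=-2b$; since $b=(1-\mu)/l^3+\mu/(1-l)^3>0$ we get $ab\neq0$, and
\[
Q(u,v)=ab^2\,(v^2-2u^2)=ab^2\,(v-\sqrt2\,u)(v+\sqrt2\,u).
\]
The tangent cone $Q=0$ of $\Gamma$ at $(l,0)$ therefore splits into the two distinct real lines $v=\pm\sqrt2\,u$, that is $q_2=\pm\sqrt2\,(q_1-l)$.

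I expect the only delicate point to be justifying that this degree-two part genuinely governs the local geometry of $\Gamma$. Since $ab\neq0$, the form $Q$ is nondegenerate and factors into two distinct real linear factors, so $(l,0)$ is an ordinary node of $C=0$; a routine application of the implicit function theorem to each factor shows that near $(l,0)$ the component $\Gamma$ consists of two smooth arcs, one tangent to each of the lines $q_2=\pm\sqrt2\,(q_1-l)$. This yields the asserted tangency and completes the proof.
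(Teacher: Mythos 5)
Your proof is correct and follows essentially the same route as the paper: both arguments reduce to the identities $C_{q_1q_2}(l,0)=0$, $C_{q_1q_1}(l,0)=2U_{q_1q_1}(l,0)^2U_{q_2q_2}(l,0)$ and $C_{q_2q_2}(l,0)=2U_{q_1q_1}(l,0)U_{q_2q_2}(l,0)^2$, so that the tangent directions satisfy $(dq_2/dq_1)^2=-C_{q_1q_1}/C_{q_2q_2}=-U_{q_1q_1}/U_{q_2q_2}=2$. Your phrasing via the leading quadratic form and the node structure is a slightly more careful packaging of the same computation (the paper implicitly differentiates $C=0$ twice instead), and the extra remark that $(l,0)$ is an ordinary double point is a welcome clarification rather than a deviation.
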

\begin{proof} Similar to Lemma \ref{lemma31},  we have
\begin{equation*}
\bigg( \left. \begin{matrix} \displaystyle \frac{dq_2}{dq_1} \end{matrix} \right|_{(q_1, q_2) = (l,0)} \bigg)^2 = - \frac{ C_{q_1 q_1}(l,0)}{C_{q_2 q_2}(l,0)}.
\end{equation*}
By means of the fact that $(l,0)$ is the critical point of $U$, we observe that
\begin{equation*}
C_{q_1 q_1}(l,0) = 2U_{q_1 q_1}(l,0)^2 U_{q_2q_2}(l,0), \;\;\;\;\; C_{q_2 q_2}(l,0) = 2U_{q_1 q_1}(l,0)  U_{q_2q_2}(l,0)^2. 
\end{equation*}
We then conclude that
\begin{equation*}
\bigg( \left. \begin{matrix} \displaystyle \frac{dq_2}{dq_1} \end{matrix} \right|_{(q_1, q_2) = (l,0)} \bigg)^2 = - \frac{ C_{q_1 q_1}(l,0)}{C_{q_2q_2}(l,0)}  = - \frac{ U_{q_1 q_1}(l,0)}{U_{q_2q_2}(l,0)}=2.
\end{equation*}
This finishes the proof of the lemma.
\end{proof}
 
Therefore, the two curves $U=c_J$ and $\Gamma$ are tangent to the same lines at $(l,0)$. Similar to (\ref{functionV}) we define the function
\begin{equation*}
C_l(q_1) = C(q_1,  \sqrt{2}(q_1- l) ).
\end{equation*}

\begin{Lemma}\label{lemmaCdd} $C(l)= C'_l(l)=C''_l(l)=C'''_l(l)=0$.
\end{Lemma}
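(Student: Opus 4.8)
The plan is to exploit the fact that $C$ is a quadratic form in the gradient $\nabla U = (U_{q_1}, U_{q_2})$, namely
\[
C = U_{q_2 q_2}\, U_{q_1}^2 - 2 U_{q_1 q_2}\, U_{q_1} U_{q_2} + U_{q_1 q_1}\, U_{q_2}^2 ,
\]
whose coefficients are built from the Hessian of $U$. Since $(l,0)$ is the unique critical point of $U$, we have $U_{q_1}(l,0) = U_{q_2}(l,0) = 0$, so $C(l,0) = 0$ at once, giving $C_l(l)=0$. I would then record the derivatives along the tangent line through the directional operator $D := \partial_{q_1} + \sqrt{2}\,\partial_{q_2}$, so that $C_l^{(k)}(l) = (D^k C)(l,0)$, and evaluate these using two structural inputs: (i) the potential $U$ is even in $q_2$, hence every derivative of $U$ containing an odd number of $\partial_{q_2}$ vanishes on the $q_1$-axis, in particular $U_{q_1 q_2}(l,0) = U_{q_1 q_1 q_2}(l,0) = U_{q_2 q_2 q_2}(l,0) = 0$; and (ii) the tangency relation established in Lemma \ref{lemma31}, which reads $U_{q_1 q_1}(l,0) + 2 U_{q_2 q_2}(l,0) = 0$.

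For the first derivative, differentiating a quadratic form in $\nabla U$ produces only terms each still carrying at least one factor of $U_{q_1}$ or $U_{q_2}$; at the critical point these all vanish, so $C_l'(l) = 0$. For the second derivative the only terms of $D^2 C$ surviving at $(l,0)$ are those in which both gradient factors have been differentiated away. Writing $P = U_{q_1 q_1}(l,0)$, $R = U_{q_2 q_2}(l,0)$ and using $U_{q_1 q_2}(l,0) = 0$, one gets $D U_{q_1} = P$ and $D U_{q_2} = \sqrt{2}\,R$ at $(l,0)$, whence
\[
(D^2 C)(l,0) = 2R\,(D U_{q_1})^2 - 4U_{q_1 q_2}(D U_{q_1})(D U_{q_2}) + 2P\,(D U_{q_2})^2 = 2PR\,(P + 2R) = 0
\]
by the tangency condition $P + 2R = 0$. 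This is the conceptual core: the line $q_2 = \sqrt{2}(q_1 - l)$ is chosen precisely to annihilate the leading quadratic behaviour of $C$ at $(l,0)$.

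The main work, and the step I expect to be the real obstacle, is the third derivative $C_l'''(l) = (D^3 C)(l,0)$. Here the surviving terms involve only the third-order data $\alpha := U_{q_1 q_1 q_1}(l,0)$ and $\beta := U_{q_1 q_2 q_2}(l,0)$, all remaining third derivatives at $(l,0)$ being odd in $q_2$ and hence zero; these enter through $D^2 U_{q_1} = \alpha + 2\beta$, $D^2 U_{q_2} = 2\sqrt{2}\,\beta$, $D U_{q_1 q_1} = \alpha$, $D U_{q_2 q_2} = \beta$ and $D U_{q_1 q_2} = \sqrt{2}\,\beta$. Carrying out the Leibniz expansion of $D^3$ applied to each of the three monomials of $C$, discarding every term still containing an undifferentiated gradient factor, and dropping the $U_{q_1 q_2}$-terms by the symmetry (i), one is left with a combination of the form $(\text{const})\,\alpha R^2 + (\text{const})\,\beta R^2$ after substituting $P = -2R$; tracking the multinomial coefficients carefully shows that both the $\alpha R^2$ and the $\beta R^2$ contributions cancel identically, so $(D^3 C)(l,0) = 0$. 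The bookkeeping is somewhat delicate, but no new input beyond facts (i) and (ii) is needed—the cancellation is forced by the quadratic-form structure of $C$ together with the single relation $P + 2R = 0$—and the explicit third-derivative formulas for $U$ can be relegated to the appendix.
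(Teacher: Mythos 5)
Your proposal is correct and follows essentially the same route as the paper: both exploit the quadratic-form structure of $C$ in $\nabla U$, the vanishing at $(l,0)$ of $U_{q_1}$, $U_{q_2}$ and of all $U$-derivatives odd in $q_2$, and the tangency relation $U_{q_1q_1}(l,0)+2U_{q_2q_2}(l,0)=0$; the only difference is that you phrase the computation via the directional operator $D=\partial_{q_1}+\sqrt{2}\,\partial_{q_2}$, while the paper expands $C_l''$ and $C_l'''$ in mixed partials of $C$ and reduces those to $U$-derivatives. Your third-derivative expression indeed collapses to $6\,(U_{q_1q_1q_1}U_{q_2q_2}+U_{q_1q_1}U_{q_1q_2q_2})(U_{q_1q_1}+2U_{q_2q_2})=0$, exactly the factorization the paper obtains.
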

\begin{proof} Since $U_{q_1}(l,0) = U_{q_2} (l,0)=0$, the first two assertions are straightforward. 

We  compute that
\begin{eqnarray*}
C_l '' (l) &=& C_{q_1 q_1} (l,0) + 2\sqrt{2} C_{q_1 q_2}(l,0) + 2 C_{q_2 q_2 } (l,0)\\
C_l'''(l) &=& C_{q_1q_1q_1}(l,0) + 3 \sqrt{2} C_{q_1 q_1 q_2 }(l,0) + 6 C_{q_1 q_2 q_2 }(l,0) + 2\sqrt{2} C_{q_2 q_2 q_2}(l,0).
\end{eqnarray*}
By means of $U_{q_1}(l,0) = U_{q_2}(l,0) = U_{q_1 q_2}(l,0) = 0$, 
we then observe that 
\begin{eqnarray*}
C_{q_1 q_2}(l,0) &=& C_{q_1 q_1 q_2}(l,0) = C_{q_2 q_2 q_2 }(l,0) = 0\\
C_{q_1q_1}(l,0) &=& 2U_{q_1 q_1}(l,0)^2 U_{q_2 q_2}(l,0)\\
C_{q_2 q_2 }(l,0) &=& 2U_{q_1 q_1}(l,0) U_{q_2 q_2}(l,0)^2 \\
C_{q_1 q_1 q_2}(l,0) &=& 2 U_{q_1 q_1}(l,0) U_{q_2 q_2}(l,0) U_{q_1 q_1 q_2}(l,0) + 2U_{q_1 q_1}(l,0)^2U_{q_2 q_2 q_2}(l,0)\\
C_{ q_2 q_2 q_2}(l,0)&=& 6U_{q_1 q_1}(l,0) U_{q_2 q_2}(l,0) U_{q_2 q_2q_2}(l,0)+6U_{q_2 q_2}(l,0)^2 U_{q_1 q_1 q_2}(l,0) .
\end{eqnarray*}
From  
\begin{equation*}
U_{q_1 q_1 } (l,0) = -2 U_{q_2 q_2}(l,0) = -2 \bigg( \frac{ 1- \mu}{l^3} + \frac{\mu}{(1-l)^3 } \bigg)
\end{equation*}
we now conclude that
\begin{equation*}
C_l ''(l) = 2U_{q_1 q_1}(l,0) U_{q_2 q_2}(l,0) \bigg( U_{q_1 q_1}(l,0) +2U_{q_2q_2}(l,0) \bigg)=0
\end{equation*}
and
\begin{equation*}
C_l'''(l) = 6\bigg( U_{q_1 q_1 q_1}(l,0)U_{q_2 q_2}(l,0) +U_{q_1 q_1}(l,0) U_{q_1 q_2 q_2}(l,0) \bigg) \bigg(  U_{q_1q_1}(l,0) + 2U_{q_2q_2}(l,0) \bigg)=0.
\end{equation*}
This completes the proof of the lemma.
\end{proof}

Since the equations of $U$ and $C$ are symmetric with respect to the $q_1$-axis, together with Corollary \ref{maincor}, the previous lemmas imply 
\begin{Corollary} There exists $\delta >0$ such that $C(q_1, q_2) <0$ for $(q_1, q_2) \in \left \{ U(q_1, q_2) = c_J \right \} \cap \left \{ (q_1, q_2) : l-\delta < q_1 < l \right \}$, provided that $\mu < 1/2$. 
\end{Corollary}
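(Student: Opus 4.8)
The plan is to run a local third-order analysis of $C$ directly along the curve $U=c_J$ near the vertex $(l,0)$, exploiting that, by Lemmas \ref{lemma31} and \ref{lemma32}, both the energy curve and the zero-curvature curve $\Gamma$ are tangent to the same pair of lines $q_2=\pm\sqrt2(q_1-l)$ there. Since $U$ and $C$ are even in $q_2$, it suffices to treat the branch of $U=c_J$ with $q_1<l$ and $q_2<0$, which is tangent to $L_+:q_2=\sqrt2(q_1-l)$; the other branch follows from the reflection $q_2\mapsto -q_2$. I would work in the local coordinates $X=q_1-l$, $Y=q_2$ and set $D:=U_{q_2q_2}(l,0)>0$, so that $U_{q_1q_1}(l,0)=-2D$ and $U_{q_1}(l,0)=U_{q_2}(l,0)=U_{q_1q_2}(l,0)=0$.

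First I would record the Hessian of $C$ at the vertex. The formulas established in the proof of Lemma \ref{lemma32} give $C_{q_1q_1}(l,0)=2U_{q_1q_1}(l,0)^2U_{q_2q_2}(l,0)=8D^3$, $C_{q_2q_2}(l,0)=2U_{q_1q_1}(l,0)U_{q_2q_2}(l,0)^2=-4D^3$ and $C_{q_1q_2}(l,0)=0$, so the quadratic part of $C$ about $(l,0)$ is $2D^3(2X^2-Y^2)$. This vanishes exactly on $L_\pm$ (reconfirming the tangency) and is negative precisely in the sectors $Y^2>2X^2$. Next I would parametrize the chosen branch of $U=c_J$ as $Y=-\sqrt2\,t+\psi(t)$ at $q_1=l-t$ for small $t>0$ and pin down the deviation $\psi$. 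Restricting $U-c_J$ to $L_+$ yields exactly the function $V$ of (\ref{functionV}), and Lemma \ref{lemmaderivatievedd} with Corollary \ref{maincor} gives $V(l-t)=-\tfrac16 V'''(l)\,t^3+O(t^4)$ with $V'''(l)>0$. Solving $U(l-t,\,-\sqrt2\,t+\psi)=c_J$ to leading order, using $U_{q_2}(l-t,-\sqrt2\,t)=-\sqrt2\,D\,t+O(t^2)$, I obtain $\psi(t)=-\dfrac{V'''(l)}{6\sqrt2\,D}\,t^2+O(t^3)<0$: the branch dips slightly into the sector $Y^2>2X^2$ where the quadratic part of $C$ is negative. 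Consequently $2X^2-Y^2=2t^2-(\sqrt2\,t-\psi)^2=-\dfrac{V'''(l)}{3D}\,t^3+O(t^4)$, whence $2D^3(2X^2-Y^2)=-\tfrac23 D^2 V'''(l)\,t^3+O(t^4)<0$.

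The heart of the argument is then to verify that the genuine cubic part of $C$ cannot overturn this sign. By Lemma \ref{lemmaCdd}, $C_l(q_1)=C(q_1,\sqrt2(q_1-l))$ satisfies $C_l''(l)=C_l'''(l)=0$, which says precisely that both the quadratic and cubic parts of the Taylor expansion of $C$ at the vertex vanish in the direction of $L_\pm$. Hence the cubic homogeneous part $C^{(3)}$ vanishes along $L_+$, and evaluating $C^{(3)}$ at the deviated point $(-t,\,-\sqrt2\,t+\psi)$ with $\psi=O(t^2)$ produces only $O(t^4)$, since $C^{(3)}(-t,-\sqrt2\,t)=0$ and $\partial_Y C^{(3)}$ is homogeneous of degree $2$. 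Combining the two estimates, $C$ restricted to the branch equals $-\tfrac23 D^2 V'''(l)\,t^3+O(t^4)$, which is strictly negative for all small $t>0$; choosing $\delta$ accordingly and invoking the reflection symmetry proves $C<0$ on $\{U=c_J\}\cap\{l-\delta<q_1<l\}$, hence $\kappa<0$ there.

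The main obstacle, and the reason all the preparatory lemmas are needed, is that the energy curve and $\Gamma$ share the same tangent lines, so $C$ vanishes along $U=c_J$ to second order and the sign is decided only at third order, where two cubic effects compete: the bending $\psi$ of the energy curve off the tangent line, whose sign is fixed by $V'''(l)>0$, and the intrinsic cubic part of $C$, which is annihilated along $L_+$ by $C_l'''(l)=0$. The delicate points are checking that $\psi$ genuinely enters at order $t^2$ (so that it feeds a nonzero $t^3$ term into the quadratic form of $C$) and that the leftover cubic contribution is truly $O(t^4)$, so that the negative term $-\tfrac23 D^2 V'''(l)\,t^3$ dominates; Lemma \ref{curvaturpositive}, which gives $C>0$ on the $q_1$-axis, serves as a global consistency check identifying $\Gamma$ as the boundary of the sector $\{C<0\}$.
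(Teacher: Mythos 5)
Your proof is correct and takes essentially the same route the paper intends: it assembles Lemma \ref{lemma31}, Corollary \ref{maincor}, Lemma \ref{lemma32} and Lemma \ref{lemmaCdd} into the local third-order analysis at the vertex $(l,0)$, which the paper itself leaves implicit (``the previous lemmas imply''). Your explicit leading term $-\tfrac{2}{3}D^2V'''(l)t^3$ along the energy curve is a careful quantitative filling-in of that omitted step rather than a different argument.
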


Since the curvature is positive  along $U=c_J$ at least outside of the region (\ref{regionpositieC}), this proves the assertion of  Theorem \ref{theorem1} for $\mu <1/2$. 

To finish the proof, we assume that $\mu = 1/2$. We now have
\begin{eqnarray*}
C_0 (q_1) &=& C(q_1, \pm \sqrt{2} (q_1 - 1/2) ) \\
&=& \frac{1}{8} \bigg( \frac{1}{r_1^7} + \frac{1}{r_2^7} + \frac{ f + r_2^2 g}{r_1^6 r_2^5} + \frac{ f + r_1^2 g}{r_1^5 r_2^6} \bigg) \\
&=& - 864(1-2q_1) \frac{ a(q_1)\sqrt{ 12q_1^2 - 8q_1 +2 }   + b(q_1)\sqrt{12 q_1 ^2 - 16q_1 +6 }           }{ (6q_1^2 - 8q_1+3)^3 ( 6q_1^2 - 4q_1+1)^3 \sqrt{ 12q_1^2 -16q_1+6} \sqrt{ 12q_1^2 - 8q_1+2}},
\end{eqnarray*}
where
\begin{equation*}
a(q_1) = q_1^5 - \frac{8}{3}q_1^4 + \frac{53}{18}q_1^3 - \frac{169}{108}q_1^2 + \frac{10}{27}q_1 - \frac{5}{216}
\end{equation*}
and
\begin{equation*}
b(q_1) = q_1^5 - \frac{7}{3}q_1^4 + \frac{41}{18}q_1^3 - \frac{137}{108}q_1^2 + \frac{11}{27}q_1 - \frac{13}{216}.
\end{equation*}
To show $C_0$ has no roots for $q_1 < 1/2$, we observe that
\begin{eqnarray*}
C_0(q_1) =0 \;\;\; &\Leftrightarrow& \;\;\; \sqrt{ 12q_1^2 - 8q_1 +2 } a(q_1) = - \sqrt{12 q_1 ^2 - 16q_1 +6 } b(q_1)\\
&\Rightarrow& \;\;\; ( 12q_1^2 - 8q_1 +2 ) a(q_1)^2 - (12 q_1 ^2 - 16q_1 +6 ) b(q_1)^2 =0\\
&\Rightarrow& \;\;\;  \frac{ (1-2q_1)^3}{11664} ( 7776 q_1^6 - 23328q_1^5 + 30348 q_1^4 - 21816 q_1^3 + 9232q_1^2 - 2212q_1 + 241)=0.
\end{eqnarray*}
Since $7776 q_1^6 - 23328q_1^5 + 30348 q_1^4 - 21816 q_1^3 + 9232q_1^2 - 2212q_1 + 241$ is positive, $C_0$ has no roots. Recall that the curve $C=0$ is bounded and $C>0$ for sufficiently large $|q|$. We have proven
\begin{Lemma} \label{lemmaposit} $C_0(q_1) >0$ for $ q_1 <1/2$.
\end{Lemma}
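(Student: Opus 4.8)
The plan is to read off positivity of $C_0$ on the half-line $(-\infty,1/2)$ from the explicit formula above by combining three facts: that $C_0$ is smooth there, that it has no zero, and that it is positive at one point, after which a continuity argument finishes the job.

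First I would simplify the radicands. On the line $q_2=\sqrt{2}(q_1-1/2)$ one has $r_1^2=q_1^2+2(q_1-1/2)^2=\tfrac14(12q_1^2-8q_1+2)$ and $r_2^2=(q_1-1)^2+2(q_1-1/2)^2=\tfrac14(12q_1^2-16q_1+6)$, so the two radicands appearing in $C_0$ are $4r_1^2$ and $4r_2^2$, while the two cubed quadratics in the denominator are $2r_1^2$ and $2r_2^2$. Hence the denominator equals $256\,r_1^7r_2^7>0$, and since the line passes through neither $E$ nor $M$ for $q_1<1/2$ (either would force $q_2=0$, which on the line happens only at $q_1=1/2$), the function $C_0$ is smooth and finite on all of $(-\infty,1/2)$.

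Next I would exclude zeros. A zero of $C_0$ at some $q_1\neq 1/2$ forces the numerator bracket $a(q_1)\,2r_1+b(q_1)\,2r_2$ to vanish, i.e.\ $a r_1=-b r_2$; squaring gives the necessary condition $(12q_1^2-8q_1+2)a^2-(12q_1^2-16q_1+6)b^2=0$, which by the factorisation recorded above equals $\tfrac{(1-2q_1)^3}{11664}P(q_1)$ with $P(q_1)=7776q_1^6-23328q_1^5+30348q_1^4-21816q_1^3+9232q_1^2-2212q_1+241$. For $q_1<1/2$ the factor $(1-2q_1)^3$ is strictly positive, so the condition reduces to $P(q_1)=0$. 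The hard part is therefore precisely the claim that $P>0$; I would establish it by showing $P$ has no real root, for instance through a sum-of-squares decomposition or a Sturm-sequence count, a calculation I would relegate to the appendix. Granting $P>0$, the squared equation never holds, so $C_0$ has no zero on $(-\infty,1/2)$.

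Finally, as $q_1\to-\infty$ the point $(q_1,\sqrt{2}(q_1-1/2))$ satisfies $|q|\to\infty$, where $C>0$; hence $C_0(q_1)>0$ for $q_1$ sufficiently negative. A continuous function on $(-\infty,1/2)$ that has no zero and is positive at one point is positive throughout, which is the assertion. Everything apart from the positivity of the sextic $P$ is bookkeeping once the formula for $C_0$ is in hand, so that single inequality is where I expect the real work to lie.
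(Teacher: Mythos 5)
Your proposal is correct and follows essentially the same route as the paper: rewrite $C_0$ with the radicands identified as $4r_1^2$ and $4r_2^2$, reduce a hypothetical zero to the vanishing of $a\,r_1+b\,r_2$, square to obtain the factor $\tfrac{(1-2q_1)^3}{11664}P(q_1)$ with the same sextic $P$, and conclude from $P>0$ together with positivity of $C$ at infinity. The only difference is cosmetic: the paper verifies $P>0$ by iterated differentiation (showing $P$ attains its minimum $21/4$ at $q_1=1/2$), whereas you would use a sum-of-squares or Sturm-sequence certificate, either of which works.
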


\begin{Corollary} \label{connectednbs}The connected component $\Gamma$ of the curve $C=0$, which passes through the point $(1/2, 0)$, lies in the region
\begin{equation*}
\left \{ (q_1, q_2) : q_2 \geq \pm \sqrt{2} (q_1 - 1/2) \right \}.
\end{equation*}
\end{Corollary}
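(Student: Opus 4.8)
The plan is to trap $\Gamma$ using the positivity of the curvature numerator $C$ along the two tangent lines together with one decisive sign computation on the axis of symmetry. First I would record the symmetries available for $\mu=1/2$: the function $C$ in (\ref{nusdoihs}) is invariant under $q_2\mapsto-q_2$ and under the Earth--Moon reflection $q_1\mapsto 1-q_1$. The first identifies the restrictions of $C$ to the two lines $q_2=\pm\sqrt2(q_1-1/2)$, so $C_0$ is well defined, while the second yields $C_0(q_1)=C_0(1-q_1)$. Hence Lemma \ref{lemmaposit}, which gives $C_0>0$ for $q_1<1/2$, upgrades to $C>0$ at every point of the two tangent lines except their common vertex $(1/2,0)$. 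Since $\Gamma\subset\{C=0\}$, this already shows that $\Gamma$ meets the two lines only at $(1/2,0)$, where by Lemma \ref{lemma32} it is tangent to both of them.

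The heart of the argument is to decide in which direction the set $\{C<0\}$ opens off the vertex, and for this I would evaluate $C$ along the vertical symmetry axis $q_1=1/2$. There one has $r_1=r_2=:r$ with $r^2=1/4+q_2^2$, and substituting into (\ref{nusdoihs}) the four summands collapse to
\begin{equation*}
C\Big(\frac12,q_2\Big)=\frac{q_2^{2}\,(2q_2^{2}-1)}{2\,r^{11}},
\end{equation*}
which is strictly negative for $0<|q_2|<1/\sqrt2$. Thus the bounded region $D:=\{C<0\}$, which is contained in (\ref{regionpositieC}) and whose boundary is the zero set of $C$, penetrates both the top and the bottom sector cut out by the tangent lines, containing a segment of the $q_2$-axis on either side of $(1/2,0)$.

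Finally I would assemble the topology. By Lemma \ref{curvaturpositive} we also have $C>0$ along the $q_1$-axis off the vertex, so all six rays issuing from $(1/2,0)$ along the two tangent lines and along the $q_1$-axis carry $C>0$; consequently $\Gamma$ cannot cross any of them, and each arc of $\Gamma\setminus\{(1/2,0)\}$ stays inside a single one of the sectors they bound. The tangent cone of $\Gamma$ at the vertex is the pair of lines, so $\Gamma$ has branches issuing along all four line directions; since the previous step shows that the neighbouring region $D$ opens vertically, these branches must bound $D$ from the top and bottom sectors, and being bounded and unable to cross either line they close up into one loop in the top sector and one in the bottom. Therefore $\Gamma\subset\{(q_1,q_2):|q_2|\ge\sqrt2\,|q_1-1/2|\}$, i.e.\ $q_2\ge\pm\sqrt2(q_1-1/2)$, which is the assertion. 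The main obstacle is exactly the sign computation on $q_1=1/2$: the positivity of $C$ on the lines and on the $q_1$-axis alone does not exclude the a priori possible configuration in which $\{C<0\}$ consists of four small loops straddling the $q_1$-axis inside the left and right sectors, and only the negativity of $C(1/2,q_2)$ rules this out and pins the opening to the vertical direction.
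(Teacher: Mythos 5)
Your proof is correct, and it follows the same basic confinement strategy that the paper intends -- Lemma \ref{lemmaposit} forces $C>0$ on the two tangent lines away from the vertex, so $\Gamma$ cannot cross them -- but you supply two ingredients that the paper leaves entirely implicit (it states the corollary with no written proof at all). First, you use the reflection $q_1\mapsto 1-q_1$, which for $\mu=1/2$ swaps $r_1$ and $r_2$ and preserves $f$ and $g$, to extend the positivity of $C_0$ from $q_1<1/2$ to $q_1>1/2$; this is genuinely needed, since Lemma \ref{lemmaposit} alone only controls half of each line. Second, and more importantly, you compute $C(1/2,q_2)=q_2^{2}(2q_2^{2}-1)/(2r^{11})<0$ for $0<|q_2|<1/\sqrt2$ (I checked: with $r_1=r_2=r$ one gets $C=(r^4+f+r^2g)/(4r^{11})$ and $r^4+f+r^2g=2q_2^2(2q_2^2-1)$ at $q_1=1/2$), which is the decisive fact pinning $\{C<0\}$ to the vertical sectors; the paper never records this, and without it the confinement to the lines and the $q_1$-axis genuinely does not determine into which sectors $\Gamma$ opens. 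Two small imprecisions: the ``bad configuration'' you describe at the end (loops straddling the $q_1$-axis in the left and right sectors) is already excluded by Lemma \ref{curvaturpositive}, so it is not quite the right counterexample to worry about -- the real issue is whether the branches of $\{C=0\}$ at the vertex could curve into the left or right sectors while remaining tangent to the lines; and your appeal to ``the tangent cone of $\Gamma$ is the pair of lines'' deserves a word of justification. Both points are settled cleanly by observing that $(1/2,0)$ is a nondegenerate saddle of $C$ (from the formulas in Lemma \ref{lemmaCdd}, $C_{q_1q_1}(l,0)=2U_{q_1q_1}^2U_{q_2q_2}>0$, $C_{q_2q_2}(l,0)=2U_{q_1q_1}U_{q_2q_2}^2<0$, $C_{q_1q_2}(l,0)=0$), so by the Morse lemma the zero set near the vertex is exactly two crossing arcs separating the locally positive region (containing the lines and the $q_1$-axis) from the locally negative one (containing the vertical segment), which forces those arcs into the closed vertical sectors. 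With that addendum your argument is complete and, in my view, more rigorous than what the paper offers.
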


If the curve $C=0$ is connected, then Corollary \ref{propfiberwise} and Corollary \ref{connectednbs} prove the theorem. 
 
We now suppose that $C=0$ comprises of several connected components and introduce the   polar coordinates $(q_1, q_2 )= ( r \cos \theta,  r \sin \theta)$. Then $C$ becomes
\begin{eqnarray*}
C(r, \theta) &=&  \frac{1}{ 8 r^7 \sqrt{ r^2 -2r \cos \theta +1 }^7} \bigg(  r^7 + \sqrt{ r^2 -2r \cos \theta +1 }^7 \\
&& \hspace{3.8cm}+   r^2 \sqrt{ r^2 -2r \cos \theta +1} ( 3r^4 -4r^3 \cos \theta +3r^2 \cos^2 \theta -2r^2)\\
&& \hspace{3.8cm}+  r(r^2 -2r \cos \theta +1)( 3r^4 -8r^3 \cos \theta +7r^2 \cos^2 \theta -2r\cos\theta) \bigg).
\end{eqnarray*}

We first consider its $r$-derivative.
\begin{Lemma}\label{rderi} The derivative $\partial_r C$ does not vanish for $ r < 1/2$. 
\end{Lemma}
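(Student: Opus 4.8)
The plan is to differentiate the explicit polar expression for $C$ and to reduce the nonvanishing of $\partial_r C$ to a sign statement for a single polynomial on the rectangle $0<r<1/2$, $-1\le u\le 1$, where I abbreviate $u=\cos\theta$ and $\rho=\sqrt{r^2-2r\cos\theta+1}=|q-M|$. On this rectangle $\rho\ge 1-r>1/2>r>0$, so both primaries are avoided and $C$ is smooth. Writing $C=N/(8r^7\rho^7)$ with
\[
N=r^7+\rho^7+r^2\rho\,B+r\rho^2 A,\qquad A=3r^4-8r^3u+7r^2u^2-2ru,\quad B=3r^4-4r^3u+3r^2u^2-2r^2,
\]
and using $\partial_r\rho=(r-u)/\rho$, a short computation gives $\partial_r(r^7\rho^7)=7r^6\rho^5(2r^2-3ru+1)$. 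Hence, after factoring out the positive quantity $r^6\rho^5$, the sign of $\partial_r C$ equals the sign of
\[
G:=r\rho^2\,\partial_r N-7N\,(2r^2-3ru+1).
\]

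First I would compute $\partial_r N$ term by term. The pieces $r^7$ and $r\rho^2 A$ differentiate to polynomials in $(r,u)$, whereas $\rho^7$ and $r^2\rho B$ produce $\rho$ times a polynomial, together with one $1/\rho$ term coming from $\partial_r\rho$ inside $r^2\rho B$, which is cleared by the factor $r\rho^2$ in $G$. Consequently $G$ has the shape $G=\Pi_1(r,u)+\Pi_2(r,u)\,\rho$ with $\Pi_1,\Pi_2$ explicit polynomials. To prove $G\neq 0$ I would isolate the square root: since $\rho>0$, a zero of $G$ forces $\Pi_1=-\Pi_2\rho$, so that $\Pi_1,\Pi_2$ carry opposite signs and, after squaring,
\[
P(r,u):=\Pi_1^2-\Pi_2^2\,(r^2-2ru+1)=0.
\]
It therefore suffices to show that $P$ has no zero on the rectangle at which $\Pi_1,\Pi_2$ have opposite signs; in particular, a definite sign of $P$ there is enough.

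To predict which sign to expect, note that as $r\to 0$ the term $r^{-7}/8$ dominates $C$ and forces $\partial_r C<0$, so I expect $G<0$ on the whole rectangle, i.e.\ $C$ is strictly decreasing along each ray $\theta=\mathrm{const}$. The verification of the polynomial sign is where the real work lies: I would collect $P$ as a polynomial in $r$ whose coefficients are polynomials in $u\in[-1,1]$, and then either show these coefficients are of one sign on $[-1,1]$, or bound the positive monomials against the dominant negative contribution arising from the $r^7$ (equivalently $r^{-7}$) piece, systematically using $r<1/2$ and $|u|\le 1$.

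The main obstacle is precisely this last step: once $\rho$ has been squared away, $P$ is a polynomial of fairly high degree in \emph{two} variables, so one must exclude its zeros on a two-dimensional region rather than on an interval, while taking care that squaring may have introduced spurious roots that are discarded by the opposite-sign condition on $\Pi_1,\Pi_2$. I would handle this by the monomial-by-monomial estimates just described, the decisive leverage being that the constraint $r<1/2$ lets the derivative of the $r^{-7}$ term dominate every competing term; the detailed arithmetic would be relegated to the appendix, consistent with the rest of the paper.
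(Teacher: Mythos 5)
Your reduction is the same as the paper's: after clearing the positive prefactor you arrive at an expression of the form $\Pi_1+\Pi_2\,\rho$ with $\rho=\sqrt{r^2-2ru+1}$, isolate the radical, square, and reduce the lemma to the nonvanishing (in fact a definite sign) of a two-variable polynomial $P(r,u)$ on $(0,1/2)\times[-1,1]$ --- this is exactly the paper's passage from $F$ to $F_0$. The gap is in the one step you yourself flag as the main obstacle, and the strategy you propose for it would fail. Your stated leverage is that ``the constraint $r<1/2$ lets the derivative of the $r^{-7}$ term dominate every competing term,'' but there is no such uniform domination: at the corner $(r,u)\to(1/2,1)$ the competing term $\partial_r\bigl(\tfrac{1}{8}\rho^{-7}\bigr)=-\tfrac{7(r-u)}{8\rho^{9}}$ tends to $+224$ while $\partial_r\bigl(\tfrac{1}{8}r^{-7}\bigr)$ tends to $-224$, and indeed $\partial_r C$ genuinely vanishes at $(r,\theta)=(1/2,0)$. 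This is forced by the earlier Lemma on $\widetilde C$: for $\mu=1/2$ the function $C(q_1,0)\ge 0$ attains its minimum value $0$ at the interior point $q_1=l=1/2$, so $\partial_{q_1}C(1/2,0)=\partial_r C(1/2,0)=0$. Consequently the quantity you must bound has no positive margin near that corner --- the paper's $F_0(x,1)$ carries an exact factor $(1-2x)$ --- so any monomial-by-monomial estimate that discards this cancellation cannot close the argument.

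What the paper does instead is a monotonicity cascade in $y=\cos\theta$: it differentiates $F_0$ seven times in $y$, checks that the resulting quadratic in $y$ has negative discriminant for $x<1/2$, and then alternately deduces monotonicity of $F_0^{(k)}$ in $y$ and evaluates $F_0^{(k)}(x,1)$, ending with the factorization
\begin{equation*}
F_0(x,1)=\tfrac{1}{784}(1-2x)(1-x)^2(2x^2-2x+1)(\cdots)(\cdots)>0 ,
\end{equation*}
which exhibits the degenerate factor $(1-2x)$ explicitly and reduces everything to sign checks of one-variable polynomials at $y=1$. To repair your proposal you would need to replace the crude domination step by an argument of this kind (or any other that tracks the vanishing of $P$ along $u=1$, $r=1/2$ exactly); as written, the decisive positivity claim is unproved and the heuristic supporting it is false.
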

\begin{proof} We write $x=r$ and $y=\cos \theta$ and define the function  $F: (0,1/2) \times [-1,1] \rightarrow \R$ by 
\begin{eqnarray*}
F(x,y)&:=& \bigg(  4 x^4 y^4   -  ( \frac{65}{7}x^5 + 8 x^3 )y^3   + ( \frac{235}{28} x^6 + \frac{345}{28}x^4 + 6x^2 ) y^2   \\
&&\hspace{0.3cm}- ( 4x^7 + \frac{38}{7}x^5 + 6x^3 + 2x) y + ( x^8 + \frac{13}{28}x^6 + \frac{9}{7}x^4 + x^2 + \frac{1}{4}) \bigg) \sqrt{ x^2 - 2xy+1 } \\
&&\hspace{0.3cm}+ x^2 \bigg(   \frac{13}{2}x^3 y^4  - ( \frac{393}{28}x^4 + \frac{207}{28}x^2) y^3  + ( \frac{333}{28}x^5 + \frac{297}{28}x^3 + \frac{39}{14}x ) y ^2  \\
&&\hspace{1.3cm} - (   5x^6 +\frac{21}{4}x^4 +  \frac{27}{14}x^2    + \frac{5}{14} )y + ( x^7 + \frac{27}{28}x^5 + \frac{3}{14}x^3 )     \bigg) 
\end{eqnarray*}
so that $ \partial_x C = -7 F (x,y) \big/ 2x^8 \sqrt{x^2 -2xy+1}^9$. If $F$ vanishes at $(x,y)$, then it satisfies

\begin{eqnarray*}
F_0(x,y)&:=&( x^2 - 2xy+1 )\bigg(  4 x^4 y^4   -  ( \frac{65}{7}x^5 + 8 x^3 )y^3   + ( \frac{235}{28} x^6 + \frac{345}{28}x^4 + 6x^2 ) y^2   \\
&&\hspace{2.5cm} - ( 4x^7 + \frac{38}{7}x^5 + 6x^3 + 2x) y + ( x^8 + \frac{13}{28}x^6 + \frac{9}{7}x^4 + x^2 + \frac{1}{4}) \bigg)^2  \\
&&- x^4 \bigg(   \frac{13}{2}x^3 y^4  - ( \frac{393}{28}x^4 + \frac{207}{28}x^2) y^3  + ( \frac{333}{28}x^5 + \frac{297}{28}x^3 + \frac{39}{14}x ) y ^2  \\
&& \hspace{1cm}- (   5x^6 +\frac{21}{4}x^4 +  \frac{27}{14}x^2    + \frac{5}{14} )y + ( x^7 + \frac{27}{28}x^5 + \frac{3}{14}x^3 )     \bigg) ^2 \\
&=& -32 x^9 y^9 + \bigg(  \frac{3425}{28}x^{10} + 144 x^8      \bigg) y^8 -  \bigg(    \frac{38917}{196}x^{11} + \frac{15021}{28}x^9 + 288 x^7    \bigg) y^7 \\
&&+  \bigg(     \frac{139155}{784} x^{12} + \frac{340323}{392}x^{10} + \frac{ 769431}{784}x^8 + 336x^6   \bigg)y^6 \\
&&-  \bigg(   \frac{ 4629}{49}x^{13} + \frac{39360}{49}x^{11} + \frac{19953}{14}x^9 + \frac{196145}{196}x^7 + 252x^5     \bigg) y^5 \\
&&+  \bigg(  \frac{411}{14}x^{14} + \frac{368931}{784} x^{12} + \frac{225543}{196}x^{10} + \frac{959533}{784}x^8 + \frac{30861}{49}x^6 +126x^4      \bigg) y^4\\
&& -  \bigg(   \frac{30}{7}x^{15} + \frac{8769}{49}x^{13} + \frac{111273}{196}x^{11} + \frac{77229}{98}x^9 + \frac{4293}{7}x^7 + \frac{49443}{196}x^5 + 42x^3     \bigg)y^3 \\
&&+ \bigg( \frac{288}{7}x^{14} +\frac{137229}{784}x^{12} + \frac{112503}{392}x^{10} + \frac{113205}{392}x^8 +\frac{17883}{98}x^6 +\frac{24709}{392}x^4 +9x^2  \bigg)y^2 \\
&&-  \bigg(   \frac{30}{7}x^{15} + \frac{1536}{49}x^{13} + \frac{5739}{98}x^{11} + \frac{1857}{28}x^9 + \frac{10869}{196}x^7 + \frac{ 211}{7}x^5 + 9x^3 +\frac{9}{8}x       \bigg)y \\
&&+ \bigg(        \frac{33}{14}x^{14} + \frac{4365}{784}x^{12} + \frac{1221}{196}x^{10} + \frac{2307}{392}x^8 + \frac{249}{56}x^6 + \frac{15}{7}x^4 + \frac{9}{16}x^2 +      \frac{1}{16}        \bigg) \\
&=&0.
\end{eqnarray*}
We regard $F_0$ as a polynomial of $y$ and abbreviate by $a_j(x)$ the  coefficient function of $y^j$ for each $j=0,1,\cdots, 9$. We differentiate $F_0$ with respect to $y$ seven times and obtain
\begin{equation*}
F_0^{(7)}(x,y) = \frac{9!}{2} a_9(x) y^2 + 8! a_8(x) y + 7! a_7(x).
\end{equation*}
We compute its discriminant 
\begin{equation*}
D( F_0^{(7)}) = (8!)^2 x^{18} \bigg( \frac{74647}{112} x^2 - \frac{23778}{7} \bigg).
\end{equation*}
Since $ x<1/2$, it is negative and hence $F_0^{(6)}$ is    decreasing. We then observe that
\begin{eqnarray*}
F_0^{(6)}(x,1) &=& \frac{9!}{6} a_9(x) + \frac{8!}{2}a_8(x) + 7! a_7(x) + 6!a_6(x) \\
&=& \frac{45}{49} x^6 ( 139155 x^6 - 1089676 x^5 + 3365846 x^4 - 5051508 x^3 + 3930519 x^2 - 1580544x + 263424)
\end{eqnarray*}
Again, since $x<1/2$, we have $ F_0^{(6)}(x,1) >0$ and then this implies that $F_0^{(5)}$ is  increasing. Proceeding in a similar way, we observe that for $x <1/2$ 

\begin{eqnarray*}
 F_0^{(5)}(x,1) <0 \;\; &\Rightarrow& \;\; \text{ $F_0^{(4)}$ is decreasing.} \\
 F_0^{(4)}(x,1) >0 \;\; &\Rightarrow& \;\; \text{ $F_0^{(3)}$ is increasing.} \\
 F_0^{(3)}(x,1) <0 \;\; &\Rightarrow& \;\; \text{ $F_0''$ is decreasing.} \\
 F_0''(x,1) >0 \;\; &\Rightarrow& \;\; \text{ $F_0'$ is increasing.} \\
 F_0'(x,1) <0 \;\; &\Rightarrow& \;\; \text{ $F_0 $ is decreasing.} 
\end{eqnarray*}
Finally we compute that
\begin{eqnarray*}
F_0(x,1) &=& \frac{1}{784}(1-2x)(1-x)^2(2x^2 - 2x +1)( 60x^4 -120x^3 +102x^2 - 42x+7)\\
&& \hspace{0.63cm}(28x^6 - 84x^6 + 150x^4 - 160x^3 +108x^2 -42x+7).
\end{eqnarray*}
One can easily see that all the terms are positive. This implies that $F_0>0$ and hence $F_0$ does not vanish. This completes the proof of the lemma.
\end{proof}

We prove a similar result for the $\theta$-derivative.

\begin{Lemma}\label{thetaderi} $\partial_{\theta} C$ does not vanish for $(r, \theta) \in (0,4/5]\times [\pi/2, \pi)$. Moreover, there exists $r_0 <1/2$ such that the same assertion holds for $(r,\theta) \in [r_0, 4/5] \times  [\theta_0, \pi/2]$. 
\end{Lemma}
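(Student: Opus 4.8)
The plan is to reduce the statement to the non-vanishing of a single polynomial and then control its sign by the same iterated-differentiation scheme used in Lemma \ref{rderi}. The first observation is that $C$ depends on the angle only through $y:=\cos\theta$: substituting $q_1=r\cos\theta$, $q_2=r\sin\theta$ one finds $r_1=r$, $r_2=\sqrt{r^2-2ry+1}$, and the polynomials $f,g$ of (\ref{nusdoihs}) collapse to $f=3r^2y^2-2r^3y+r^4-2r^2$ and $g=2r^2-2ry$, so that $C=C(r,y)$. Hence
\begin{equation*}
\partial_\theta C=-\sin\theta\,\partial_y C,
\end{equation*}
and since $\sin\theta>0$ for $\theta\in(0,\pi)$, on both ranges $\partial_\theta C$ vanishes if and only if $\partial_y C$ does. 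The ranges become $y\in(-1,0]$ for the first region and $y\in[0,\cos\theta_0]$, $r\in[r_0,4/5]$ for the second.

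Next I would write, from the expression recorded in the appendix,
\begin{equation*}
\partial_y C=\frac{P(r,y)+Q(r,y)\,r_2}{8\,r^7\,r_2^{9}},
\end{equation*}
with $P,Q$ explicit polynomials (obtained by separating the even and odd powers of $r_2$ in the numerator of $C$) and $r_2=\sqrt{r^2-2ry+1}>0$. If $\partial_y C=0$ at some point, then $P=-Q\,r_2$, and squaring isolates the radical:
\begin{equation*}
G_0(r,y):=P(r,y)^2-Q(r,y)^2\,(r^2-2ry+1)=0.
\end{equation*}
Since the prefactor $8r^7r_2^{9}$ is strictly positive on each region, it therefore suffices to prove $G_0\neq 0$ — and I expect in fact $G_0>0$ — on each of the two regions; this yields $\partial_y C\neq0$ and hence $\partial_\theta C\neq0$.

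The sign analysis of $G_0$ I would carry out region by region exactly as for $F_0$ in Lemma \ref{rderi}. Regarding $G_0$ as a polynomial in $y$ with coefficient functions $a_j(r)$, the idea is to differentiate in $y$ until the degree drops to two, read off a monotonicity statement from the sign of the discriminant (valid now for $r\le 4/5$ rather than $r<1/2$), evaluate the top surviving derivative at the relevant endpoint of the $y$-range ($y=0$ for the first region, $y=\cos\theta_0$ or $y=0$ for the second), and cascade the resulting sign information back up through the successive antiderivatives. For the first region ($y\le 0$, all $r\in(0,4/5]$) the chain should terminate with a factorisation of $G_0$ at the endpoint into manifestly positive factors, as happened with $F_0(x,1)$ in Lemma \ref{rderi}. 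For the second region ($y\ge 0$) the same cascade closes only once $r$ is bounded away from $0$; the threshold $r_0<1/2$ is exactly what the endpoint evaluations need in order to keep the correct sign, and the bound $\theta_0$ keeps $y$ away from the value at which a boundary factor would change sign.

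The main obstacle is the size of $G_0$: after squaring it is a high-degree polynomial in both $r$ and $y$ (comparable to, or larger than, $F_0$ of Lemma \ref{rderi}), so the bookkeeping in the monotonicity cascade — tracking the sign of each successive derivative at the endpoint and verifying the discriminant sign at each stage for $r\le 4/5$ — is the delicate and lengthy part. The genuine content, as opposed to the algebra, is the region decomposition itself: the cascade does not close on the full rectangle, and the values $4/5$, $r_0<1/2$ and $\theta_0$ are chosen precisely so that each endpoint evaluation lands on the correct side of zero. I would relegate the explicit coefficients $a_j(r)$ and their endpoint factorisations to the appendix.
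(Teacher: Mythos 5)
Your reduction $\partial_\theta C=-\sin\theta\,\partial_y C$ with a positive prefactor is exactly the paper's starting point, but from there the routes diverge, and the paper's is both lighter and safer. The paper writes the relevant numerator as $G(x,y)=a(x,y)\sqrt{x^2-2xy+1}+b(x,y)$ with
\begin{equation*}
a(x,y)=6x^3y^2-(10x^4-6x^2)y+14x^5-16x^3,\qquad b(x,y)=-14x^3y^3+(27x^4-9x^2)y^2-(24x^5-18x^3-12x)y+14x^6-3x^4-12x^2-2,
\end{equation*}
and shows directly that $a<0$ and $b<0$ on the two regions (via the discriminant of the quadratic $a$ and of $b'$, plus endpoint evaluations at $y=-1,0,1/3$; this is where $4/5$, $r_0=(13-\sqrt{106})/21$ and $\cos\theta_0=1/3$ come from). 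Since the radical is positive, $G<0$ follows with no squaring and only degree-$2$ and degree-$3$ polynomials in $y$ to control.

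Your plan to square and run the monotonicity cascade of Lemma \ref{rderi} on $G_0=b^2-a^2(x^2-2xy+1)$ has a concrete logical weakness beyond its sheer size: $G_0=(b-a\,r_2)(b+a\,r_2)$, and only the factor $b+a\,r_2=G$ is the quantity you actually need to be nonzero. Squaring introduces the spurious factor $b-a\,r_2$, so proving $G_0\neq 0$ on the whole region is a strictly stronger statement than the lemma and could in principle fail even when the lemma holds (it happens to work in Lemma \ref{rderi}, but that has to be checked, not assumed). Combined with the fact that none of the coefficients, discriminant signs, or endpoint factorisations are computed and the thresholds $r_0,\theta_0$ are left undetermined, the proposal as written is a programme with a real risk of not closing, rather than a proof; the paper's split into $a<0$ and $b<0$ avoids both the spurious factor and almost all of the bookkeeping.
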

\begin{proof} As in the proof of the previous lemma we plug $r=x, \cos\theta=y$ in the equation of $\partial_{\theta}C$ and define the function $G:(0,4/5) \times (-1,0] \rightarrow \R$ by
\begin{eqnarray*}
G(x,y)   &:=&    ( 6x^3 y^2   - (10x^4 - 6x^2 ) y + 14x^5 - 16x^3 ) \sqrt{ x^2 - 2xy+1} \\
&& - 14x^3 y^3  + (27x^4 - 9x^2 ) y^2   - (24x^5 - 18x^3 - 12x)y + 14x^6 - 3x^4 -12x^2 -2 
\end{eqnarray*}
so that $\partial_{\theta} C (x,y)  = -G (x,y) \sin\theta / 8x^5 \sqrt{ x^2 -2xy+1}^9$. We claim that  $G$ does not vanish, provided that the variables are given as in the assertion. We abbreviate by $a(x,y) = 6x^3 y^2   - (10x^4 - 6x^2 ) y + 14x^5 - 16x^3 $ and $b(x,y) =- 14x^3 y^3  + (27x^4 - 9x^2 ) y^2   - (24x^5 - 18x^3 - 12x)y + 14x^6 - 3x^4 -12x^2 -2$ and claim that 

$\bullet$ $a(x,y)<0$ in either $(0,4/5] \times (-1, 0]$ or $((13-\sqrt{106})/21, 4/5] \times [0,1/3]$.

$\bullet$ $b(x,y) <0$ for $(x,y) \in (0,4/5] \times (-1, 1/3]$.\\
 To prove the claim, we proceed as in the proof of the previous lemma, namely we regard both $a$ and $b$ as polynomials of $y$. 

We first observe that the discriminant of $a$ is positive
\begin{equation*}
D(a)= -4 x^4 ( 59x^4 - 66x^2 -9) >0 
\end{equation*}
and hence $a$ has two real roots. We then compute that
\begin{eqnarray*}
a(x,-1)&=&2x^2 (1+x)(7x^2 - 2x-3)<0 \\
a(x,0) &=& 2x^3 ( 7x^2-8)<0 \\
a(x,1/3) &=& \frac{2}{3}x^2 (1+x) ( 21x^2 - 26x +3) 
\end{eqnarray*}
Since $21x^2 - 26x +3 <0$ for $x_0 = (13 - \sqrt{106})/21 <x$, this proves the assertion of the claim for $a$.

\begin{figure}[t]
\begin{subfigure}{0.5\textwidth}
  \centering
  \includegraphics[width=1.0\linewidth]{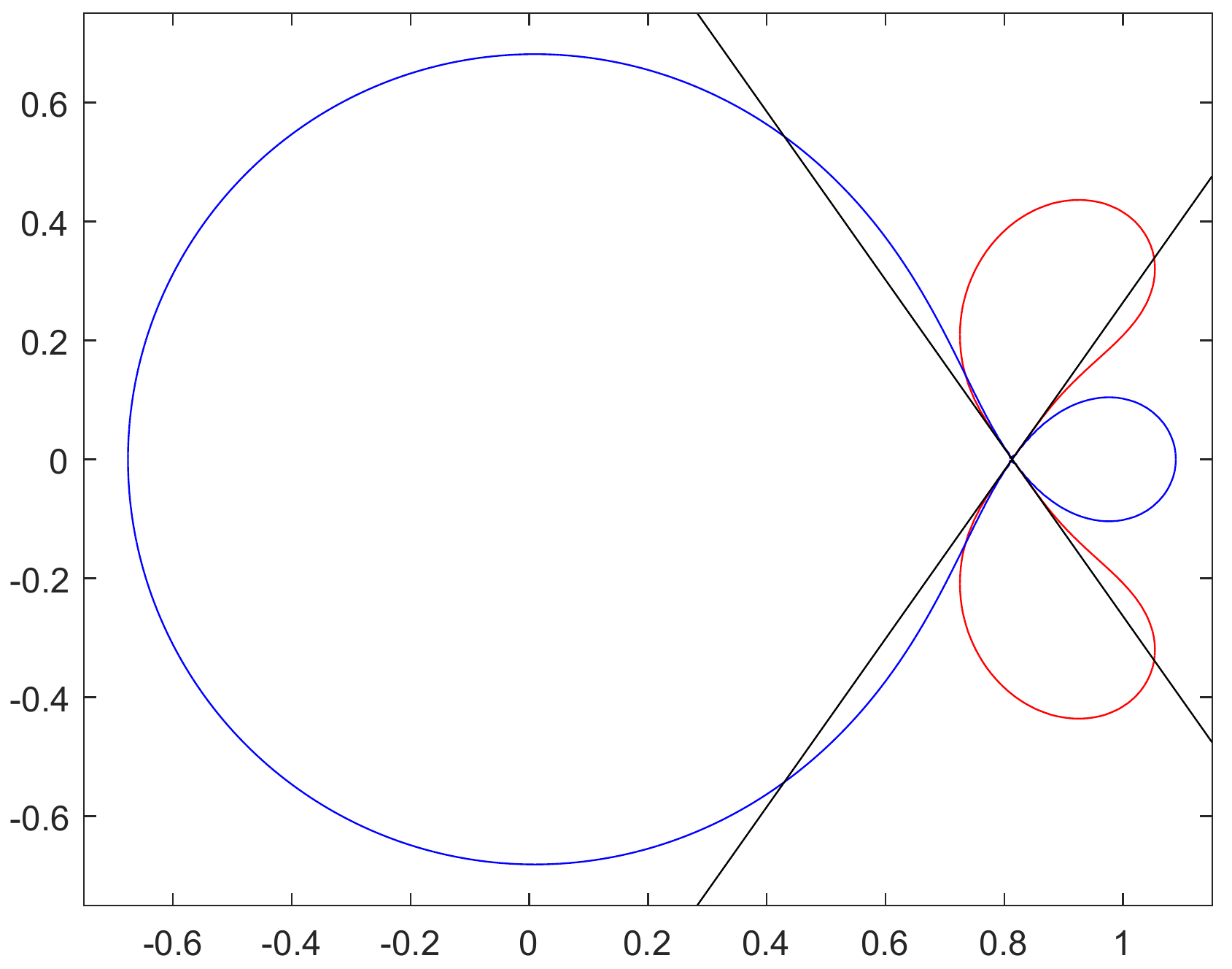}
  \caption{   $\mu = 0.05$    }  
\label{functionff}
\end{subfigure}
\;
~
\begin{subfigure}{0.5\textwidth}
  \centering
  \includegraphics[width=1.0\linewidth]{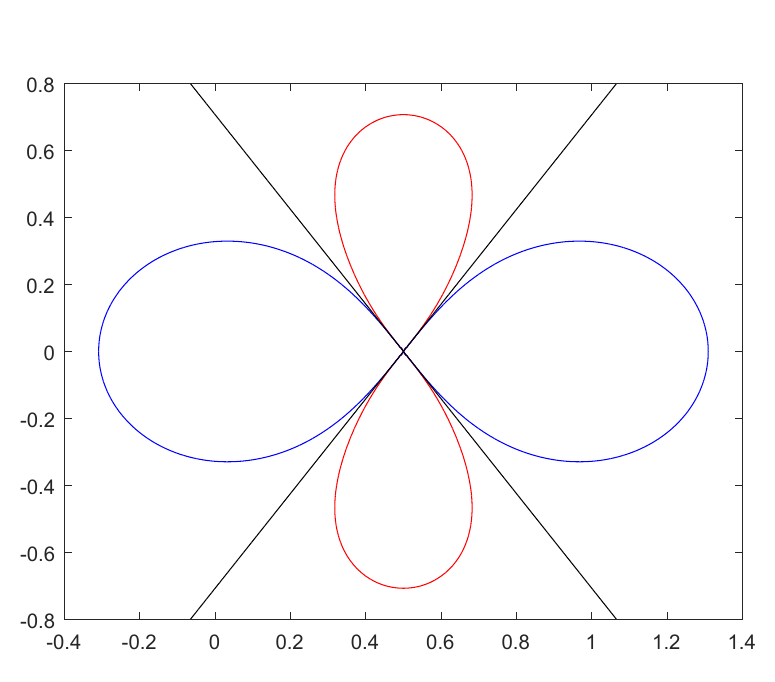}
 \caption{    $\mu=0.5$   }
\label{functionfdf}
\end{subfigure}
  \caption{  The left blue circle is $\mathcal{K}_{c_J}^E$ and the right one is $\mathcal{K}_{c_J}^M$. The red curve represents $C(q) =0$. They intersect at $(l,0)$ and are tangent to the black lines: $q_2 =  \pm \sqrt{2} ( q_1 - l).$}  
 \label{fig1}
\end{figure}

Similarly, we compute the discriminant of $b'$
\begin{equation*}
D(b') = -36x^4 ( 31x^4 - 30x^2 - 65)
\end{equation*}
and since $x \in (0,4/5)$, it is positive. We observe that $b'(x,-1) = -6x(1+x)^2(4x^2 + x - 2) $ is positive for $x<(\sqrt{33}-1)/8$ and negative for  $x>(\sqrt{33}-1)/8$ and $b'(x,1/3) = -\frac{2}{3}x ( 36x^4 - 27x^3 - 20x^3 + 9x-18) >0$. Hence if either $x<(\sqrt{33}-1)/8$ or  $x>(\sqrt{33}-1)/8$, then $b$ attains the maximum at $y=-1$ or $y=1/3$ or at $y=1/3$, respectively.  Now the claim follows from
\begin{eqnarray*}
b(x,-1) &=&  14x^6 +24x^5+24x^4-4x^3-21x^2-12x-2<0\\
b(x,1/3) &=&  14x^6 - 8x^5 + \frac{148}{27}x^3 - 13x^2 +4x -2   <0.
\end{eqnarray*}
This completes the proof of the lemma.
\end{proof}

We are now in a position to prove the theorem. Since the potential function is symmetric with respect to the $q_2$-axis, we may assume that $q_2 \geq 0$. Suppose that there exists another connected component $\Gamma'$ of the curve $C=0$ which intersects $\mathcal{K}_{c_J}^E$. Since $\Gamma'$ is bounded, there exists a critical point of $C$ in the region enclosed by $\Gamma'$. We observe that $q_1^2 + q_2^2 = 1/4$ and $q_2 = -\sqrt{2}(q_1 - 1/2)$ intersect at $(q_1, q_2) = (0,0)$ or $(1/6, \sqrt{2}/3)$ and   $\cos \theta = 1/3$ at $(1/6, \sqrt{2}/3)$. Then by the argument before Lemma \ref{curvaturpositive} and Lemmas     \ref{rderi} and \ref{thetaderi}, the critical point must lie in the region  $\left \{ (q_1, q_2) : q_2 > - \sqrt{2} (q_1- 1/2) \right \}$. Moreover, by Lemma \ref{lemmaposit}, $\Gamma'$ also lies in that region. Together with Corollary \ref{connectednbs} this implies that the curve $C=0$ lies in $\left \{ (q_1, q_2) : q_2 \geq \pm \sqrt{2} (q_1 - 1/2) \right \}.$  By means of Corollary \ref{propfiberwise}, this completes the proof of Theorem \ref{theorem1}.

\;\;\;


\begin{thebibliography}{99}


\bibitem{GSS}
P.~Albers, J.~Fish, U.~Frauenfelder, H.~Hofer and O.~van~Koert, Global surfaces of section in the planar restricted 3-body problem, \emph{Arch. Ration. Mech. Anal.} \textbf{204} (2012), no. 1, 273--284.



\bibitem{RKP} 
P.~Albers, J.~W.~Fish, U.~Frauenfelder, and O.~van Koert, {The Conley-Zehnder indices of the rotating Kepler problem}, \emph{Math. Proc. Cambridge. Philos. Soc.} (2013), 243--260.


\bibitem{contact}
P.~Albers, U.~Frauenfelder, O.~van Koert, and G.~Paternain, {The contact geometry of the restricted three body problem}, \emph{Comm. Pure Appl. Math}. \textbf{65} (2012), no. 2, 229--263.


\bibitem{B1}
G.~D.~Birkhoff,  {The restricted problem of three bodies}, \emph{Rend. Circ. Mat. Palermo},  1915, vol. 39, pp. 265--334.

\bibitem{B2}
G.~D.~Birkhoff, {Dynamical systems with two degress of freedom}, \emph{Trans. Amer. Math. Soc}. \textbf{18} (1917), no. 2, 199-300.


 
 

\bibitem{Finsler}
K.~Cieliebak, U.~Frauenfelder and O.~van~Koert, {The Finsler geometry of the rotating Kepler problem}, \emph{Publ. Math. Debrecen} \textbf{84} (2014), no. 3-4, 333--350.


\bibitem{Conley}
C.~C.~Conley, {On some new long periodic solutions of the plane restricted three body problem}, \emph{Comm. Pure Appl. Math.} \textbf{16} (1963), 449--467.





\bibitem{Euler1}
Euler, L., {Un corps \'etant attir\'e en raison d\'eciproque quarr\'ee des distances vers deux points fixes donn\'es},  \emph{M\'emoires de l'Acad. de Berlin}, 1760, pp. 228--249.


\bibitem{Euler2}
Euler, L.,  {De motu corporis ad duo centra virium fixa attracti}, \emph{Novi Commentarii Academiae Scientiarum Imperialis Petropolitanae},  1766, vol. 10, pp. 207--242;\\
Euler, L.,  {De motu corporis ad duo centra virium fixa attracti}, \emph{Novi Commentarii Academiae Scientiarum Imperialis Petropolitanae},  1767, vol. 11, pp.  152--184.

 
\bibitem{book}
U.~Frauenfelder and O.~van Koert, \emph{The restricted three body problem and holomorphic curves},  book in preparation.
 
\bibitem{convexRKP}
U.~Frauenfelder, O.~van Koert and L.~Zhao, \emph{A convex embedding for the rotating Kepler problem}, arXiv:1605.06981


\bibitem{Hill}
Hill, G. W., {Researches in the lunar theory}, \emph{Amer. J. Math.}, 1878, vol. 1, no. 1, pp. 5--26;\\
Hill, G. W., {Researches in the lunar theory}, \emph{Amer. J. Math.}, 1878, vol. 1, no. 2, pp. 129--147;\\
Hill, G. W., {Researches in the lunar theory}, \emph{Amer. J. Math.}, 1878, vol. 1, no. 3, pp. 245--260.



\bibitem{Hofer}
H.~Hofer,  {Pseudoholomorphic curves in symplectizations with application to the Weinstein conjecture in dimension three}, \emph{Invent. Math}. \textbf{114} (1993), 515--563.

\bibitem{HWZ}
H.~Hofer, K.~Wysocki, and E.~Zehnder, {The dynamics on three-dimensional strictly convex energy surface}, \emph{Ann. of Math.}  \textbf{148} (1998), no. 1, 197--289.






\bibitem{Kim}
S.~Kim, \emph{Homoclinic orbits in  the Euler problem of two fixed centers}, arXiv:1606.05622v3.





\bibitem{Kim2}
S.~Kim,  {Dynamical convexity of the Euler problem of two fixed centers}, \emph{Math. Proc. Cambridge Philos. Soc}., 1-26. doi:10.1017/S0305004117000548 (2017).

\bibitem{Lee}
J. Lee,  {Fiberwise convexity of Hill's lunar problem}, \emph{J. Topol. Anal}., \textbf{9}, (2017), no. 4, 571--630.


\bibitem{Levi}
T. Levi-Civita,  {Sur la r\'egularisation du probl\`eme des trois corps}, \emph{Acta Math.} \textbf{42} (1920), no.1, 99--144. 


\bibitem{McGehee}
R.~P.~McGehee, \emph{Some homoclinic orbits for the restricted three-body problem}, 1969, Ph.D. Thesis - The University of Wisconsin-Madison.



\bibitem{Moser}
J.~Moser,  {Regularization of Kepler's problem and the averaging method on a manifold}, \emph{Comm. Pure  Appl. Math}. \textbf{23} (1970), 609-636.

 
\bibitem{Poin}
H.~Poincar\'e,  {Sur un th\'eor\`eme de g\'eom\'etrie}, \emph{Rend. Circ. Mat. Palermo} \textbf{33} (1912), no. 1, 375--407. 

 

\bibitem{salomao}
Pedro A. S. Salom$\tilde{a}$o,  {Convex energy levels of Hamiltonian systems}, \emph{Qualitative Theory of Dynamical Systems} \textbf{4}, 2, (2004), 439--454.

 


\bibitem{Bifurcation}
H.~Waalkens, H.~R.~Dullin, and P.~H.~Richter, {The problem of two fixed centers: bifurcations, actions, monodromy}, \emph{Physica D}. \textbf{196(3-4)} (2004), 265--310.
 

  

 
  






 









\end{thebibliography}
\end{document}